\documentclass[final,1p,times]{elsarticle}

\usepackage{amssymb}
\usepackage{amsthm}
\usepackage{amsmath,amssymb,amsopn,amsfonts,mathrsfs,amsbsy,amscd}
\usepackage{longtable}
\usepackage{longtable}
\usepackage{caption}
\usepackage{multirow}

\newcommand{\prs}{\langle\;,\;\rangle}

\newcommand{\im}{\mathrm{Im}}
\newcommand{\too}{\longrightarrow}

\newcommand{\esp}{\quad\mbox{and}\quad}

\newcommand{\so}{\mathfrak{so}}
\def\br{[\;,\;]}

\newcommand{\A}{{\mathcal A}}

\newcommand{\G}{\mathfrak{g}}
\newcommand{\g}{\mathfrak{g}}

\newcommand{\af}{\mathfrak{a}}
\newcommand{\df}{\mathfrak{d}}

\newcommand{\h}{{\mathfrak{h}}}

\newcommand{\oL}{{\overline{\mathrm{L}}}}
\newcommand{\Lu}{{\mathrm{L}}}
\newcommand{\Ru}{{\mathrm{R}}}

\newcommand{\ad}{{\mathrm{ad}}}

\newcommand{\tr}{{\mathrm{tr}}}

\newcommand{\ass}{{\mathrm{ass}}}

\newcommand{\al}{\alpha}

\newcommand{\e}{\epsilon}

\newcommand{\la}{\lambda}

\newtheorem{theo}{Theorem}[section]
\newtheorem{pr}{Proposition}[section]
\newtheorem{Le}{Lemma}[section]

\newtheorem{remark}{Remark}

\font\bb=msbm10

\def\R{\hbox{\bb R}}

\begin{document}

\begin{frontmatter}
	
	
	
	
	\title{ Flat Lorentzian Lie groups: A complete description }
	
	\author[label1]{ Mohamed Boucetta}
	\address[label1]{ Cadi-Ayyad University\\
		BP 549 Marrakech Morocco\\e-mail: m.boucetta@uca.ac.ma  
		
	}
	
	
	
	
	
	\begin{abstract} In this paper, we  establish a complete structural description of flat Lorentzian Lie groups, i.e., Lie groups endowed with a flat left invariant Lorentzian metric, thereby resolving a long-standing open problem in the theory of pseudo-Riemannian Lie groups. Our main result shows that any flat Lorentzian Lie group either admits a timelike parallel left-invariant vector field or is of Kundt type, and that in both cases the underlying Lie algebra falls into one of six explicit classes. A key ingredient of the proof is a refined analysis of the double extension process, which reveals that all flat Lorentzian Lie algebras arise — directly or in a generalized sense — from flat Euclidean ones. As a consequence, we obtain easily a complete classification in dimensions three and four, recovering and unifying several previously known partial results.

	\end{abstract}
	
	\begin{keyword} Lie algebras \sep Lie groups  \sep  vanishing curvature \sep left symmetric algebras \sep Novikov algebras \sep Kundt spaces \sep Lorentzian metric

		\MSC 53C50 \sep
		\MSC 22E25 \sep \MSC 53C30\sep \MSC 17D25\sep\MSC 53C05\sep \MSC 22E60 
		
		
	\end{keyword}

\end{frontmatter}

\section{Introduction}\label{section1}

	A Lie group \( G \) endowed with a left-invariant pseudo-Riemannian metric is called a \emph{pseudo-Riemannian Lie group}. When the metric is positive definite (respectively, of signature \((-,+,\ldots,+)\)), we shall refer to it as \emph{Riemannian} (respectively, \emph{Lorentzian}). The Lie algebra \( \mathfrak{g} \) of such a group, equipped with the induced inner product, is called a \emph{pseudo-Euclidean Lie algebra}; accordingly, we use the terms \emph{Euclidean} and \emph{Lorentzian} in the corresponding cases.
	
	It is well known that any left-invariant Riemannian metric on a Lie group is geodesically complete. In contrast, in the Lorentzian setting when the metric is flat, geodesic completeness holds if and only if the group is unimodular \cite{segal}.
	
	Let \((G,h)\) be a pseudo-Riemannian Lie group, and denote by \(\nabla\) its Levi-Civita connection. Identifying \( \mathfrak{g} \) with the space of left-invariant vector fields on \(G\), the connection induces a bilinear product \( \cdot \) on \( \mathfrak{g} \) defined by
	\[
	X \cdot Y = \nabla_X Y, \qquad X,Y \in \mathfrak{g}.
	\]This is characterized by the two relations:
	\[ [X,Y]=X\cdot Y-Y\cdot X\esp \langle X\cdot Y,Z\rangle+\langle X\cdot Z,Y\rangle=0. \]
	It is a classical result that \((G,h)\) is flat if and only if \((\mathfrak{g},\cdot)\) is a left-symmetric algebra, that is,
	\[
	\mathrm{ass}(X,Y,Z) = \mathrm{ass}(Y,X,Z), \qquad \forall X,Y,Z \in \mathfrak{g},
	\]
	where
	\[
	\mathrm{ass}(X,Y,Z) = (X \cdot Y)\cdot Z - X \cdot (Y \cdot Z)
	\]
	denotes the associator. We call $\G$ a flat pseudo-Euclidean Lie algebra.
	
	The classification of flat pseudo-Riemannian Lie groups is a fundamental and difficult problem in the theory of homogeneous spaces. It sits at the crossroads of Lie theory, differential geometry, and the study of left-symmetric algebras, and has attracted sustained attention over several decades. In the Riemannian case, a complete characterization was obtained by Milnor \cite{Milnor}, who proved that a Lie group admits a flat left-invariant Riemannian metric if and only if its Lie algebra decomposes orthogonally as $\mathfrak{g} = \mathfrak{b} \oplus \mathfrak{u}$, where $\mathfrak{b}$ is an abelian subalgebra, $\mathfrak{u}$ is an abelian ideal, and $\mathrm{ad}_b$ is skew-symmetric for every $b \in \mathfrak{b}$. A refined and more precise version of this result was later obtained in \cite[ Theorem 3.1]{ABL}, which will play a central role in the present paper.

	In contrast, the Lorentzian case is far from being completely understood, despite numerous partial results. It is known that any flat Lorentzian Lie group is necessarily solvable \cite{della}. In \cite{Aub-Med}, Aubert and Medina introduced the so-called \emph{double extension process}, which provides a systematic way to construct flat Lorentzian Lie algebras from flat Euclidean ones. This construction has led to several important subclasses, including flat nilpotent Lorentzian Lie algebras \cite{Aub-Med}, those with degenerate center \cite{ABL} and non-unimodular (hence incomplete)  \cite{BH}.
	
	On the other hand, it can be shown that the Levi-Civita product of a flat Euclidean Lie algebra is of Novikov type, a condition that places strong algebraic constraints on the algebra (see the definition below). The class of flat Lorentzian Lie algebras whose Levi-Civita product is of Novikov type has been described in \cite{oubba,L3,L2}. More recently, a classification of flat nilpotent Lorentzian Lie groups has been obtained in \cite{bajo} and an algebraic group-theoretical version of this classification was already present in \cite[Section 3]{Margulis}.
	 In addition, flat pseudo-Riemannian Lie groups of signature \((2,n-2)\) have been investigated in \cite{L1,L4}.
	 
	 Let \((\mathcal{A}, \bullet)\) be a left-symmetric algebra. We say that \(\mathcal{A}\) is a \emph{Novikov algebra} if it satisfies
	 \begin{equation}\label{Novikov1}
	 	(x \bullet y)\bullet z = (x \bullet z)\bullet y, \qquad \forall x,y,z \in \mathcal{A}.
	 \end{equation}
	 Novikov algebras arise naturally in the study of Poisson brackets of hydrodynamical type \cite{N}.
	 
	 A flat pseudo-Riemannian Lie group is said to be of \emph{Novikov type} if its Lie algebra, endowed with the Levi-Civita product, is a Novikov algebra. Such Lie algebras admit a useful characterization given in \cite{L3,L2,oubba} (see Proposition~\ref{Novikovpr}).

	To summarize the state of the art, two main classes of flat Lorentzian Lie algebras have emerged from the existing literature: those obtained via the double extension process from flat Euclidean Lie algebras, and those whose Levi-Civita product is of Novikov type. This naturally raises the following fundamental question, which has remained open until now:
	
	\medskip
	\noindent\textit{Do there exist flat Lorentzian Lie algebras that belong to neither of these two classes? And more generally, is it possible to give a complete structural description of all flat Lorentzian Lie algebras?}
	\medskip
	
	The main result of this paper provides a definitive and complete answer to this question. We prove that every flat Lorentzian Lie algebra belongs to one of six explicit families, which together cover both the double extension class and the Novikov class, and show that no other flat Lorentzian Lie algebras exist. The key geometric insight behind our approach is the following dichotomy, established in Proposition \ref{ee}: any flat Lorentzian Lie algebra either admits a timelike parallel left-invariant vector field, or is of Kundt type. This dichotomy, which is of independent geometric interest, organizes the entire classification and drives the structure of the paper.
	
	We recall that Kundt spacetimes play a central role in general relativity and Lorentzian geometry. A Lorentzian manifold $(M, g)$ is said to be of Kundt type \cite{Z} if there exist a nowhere-vanishing null vector field $V$ and a $1$-form $\alpha$ such that
	$$g(V, V) = 0, \quad \nabla_X V = \alpha(X) V, \quad \nabla_V V = 0,$$
	for every vector field $X$ orthogonal to $V$. In the homogeneous setting studied here, the Kundt condition translates into a precise algebraic condition on the Levi-Civita product of the Lie algebra, which we exploit systematically throughout the paper.
	
	Our main result is the following.

	\begin{theo}\label{main}
		Let \((G,h)\) be a flat Lorentzian Lie group. Then
		 \((G,h)\) admits either a timelike parallel left-invariant vector field or is of Kundt type
			and  the Lie algebra \((\mathfrak{g},\langle\cdot,\cdot\rangle)\), where \(\langle\cdot,\cdot\rangle = h(e)\), is isomorphic to one of six explicitly described families listed in Tables~\ref{1}-\ref{6}, which exhaust all possibilities.

	\end{theo}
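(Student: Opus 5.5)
The plan has three stages: a geometric dichotomy, a reduction to flat Euclidean data in each branch, and a case analysis that produces Tables~\ref{1}--\ref{6}.

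\textbf{Dichotomy.} Let $(\g,\prs)$ be flat Lorentzian with Levi-Civita product $\cdot$. Left-symmetry says exactly that $\Lu:\g\to\so(\g,\prs)$, $X\mapsto \Lu_X$, is a Lie algebra representation. Since $\g$ is solvable \cite{della}, $\Lu(\g)$ is a solvable subalgebra of $\so(1,n-1)$. By Lie's theorem applied to the complexification, combined with the structure of solvable subalgebras of $\so(1,n-1)$, either:
\begin{itemize}
\item $\Lu(\g)$ is contained in a compact subalgebra, and so fixes a timelike vector; or
\item $\Lu(\g)$ preserves a null line $\R V$, i.e.\ $X\cdot V=\alpha(X)V$ for all $X\in\g$.
\end{itemize}
In the first case I want a timelike \emph{parallel} vector, not merely an invariant timelike line. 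Since $\Lu_X$ is skew, $\Lu_X e=\lambda(X)e$ with $e$ timelike forces $\lambda(X)\langle e,e\rangle=0$, so $\lambda=0$ and $e$ is parallel. In the second case we have $V\cdot V=\alpha(V)V$, while Kundt type requires $\nabla_VV=0$. To obtain it, I would show $\alpha(V)=0$: for instance via $\langle V\cdot V,Z\rangle=-\langle V\cdot Z,V\rangle$ together with flatness applied to $[V,Z]$, or by replacing $V$ with a suitable multiple along the group. This is presumably the content of Proposition~\ref{ee}, and I would prove it first.

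\textbf{Timelike parallel case.} Here $e^\perp$ is Euclidean. Since $\Lu_Xe=0$, we get $\langle X\cdot Y,e\rangle=0$, so $\g\cdot\g\subset e^\perp$ and in particular $[\g,\g]\subset e^\perp$. Hence $e^\perp$ is an ideal carrying an induced flat Euclidean structure, and $\Ru_e$ describes how $e$ acts on it. I would then apply the refined Milnor theorem \cite[Theorem 3.1]{ABL} to $e^\perp$ and impose the compatibility of $\ad_e$ and $\Ru_e$. This should produce one or two tables, split according to whether $e\cdot e=0$ and whether $e$ is central modulo the skew part.

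\textbf{Kundt case.} Set $I=\R V$. Then $I^\perp$ is a $\Lu$-invariant degenerate subspace, and $I^\perp/I$ is Euclidean. The key claim is that $I^\perp$ is a left ideal and a subalgebra, and that $I^\perp/I$ inherits a flat Euclidean Levi-Civita product. This would exhibit $\g$ as a double extension, in the sense of Aubert--Medina \cite{Aub-Med}, of a flat Euclidean algebra $\h=I^\perp/I$ by a pair $(V,\bar V)$ with $\bar V$ null and $\langle V,\bar V\rangle=1$. Concretely, I would write
\[
\g=\R\bar V\oplus\h\oplus\R V
\]
and expand the products $\bar V\cdot\bar V$, $\bar V\cdot x$, $x\cdot\bar V$, $x\cdot y$ in terms of:
\begin{itemize}
\item the Euclidean product on $\h$;
\item an endomorphism $D$ of $\h$, a vector $b\in\h$, scalars $\mu$ and $\alpha$;
\item a 2-form $\omega$ on $\h$.
\end{itemize}
Writing out the left-symmetry identity over all triples of basis types then gives a system of equations in $(D,b,\mu,\alpha,\omega)$.

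\textbf{Main obstacle.} The hardest step is the last one: solving this system completely and sorting the solutions into families. The difficulty comes from the case $\alpha\neq 0$, which is the non-unimodular, Novikov-type regime of \cite{BH,oubba,L3,L2}. There the extension is not a classical double extension, which is why the abstract speaks of a generalized sense. My plan is to split first according to whether $\alpha$ vanishes on $I^\perp$ and according to the value $\alpha(\bar V)$. Within each branch, I would use the decomposition $\h=\mathfrak b\oplus\mathfrak u$ of \cite[Theorem 3.1]{ABL} to diagonalize $D$ blockwise and solve the equations for $\omega$ and $b$ piece by piece. The resulting branches should give the remaining four tables. Finally, I would check exhaustiveness by verifying that every case of the two-level split has been covered.
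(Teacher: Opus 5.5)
\textbf{The gap is in the Kundt case.} Your key claim is that $I^\perp/I$ inherits a flat Euclidean Levi-Civita product, so that $\g$ becomes an Aubert--Medina double extension. This is false in general, and your list of products omits exactly the term responsible, namely $V\cdot x$.

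In the paper's notation ($V=e$, $\bar V=f$, $\g=\R e\oplus\h\oplus\R f$), \eqref{Levi} gives $e\cdot a=Ea+\langle a,v\rangle_\h e$ with $E\in\so(\h)$, and \eqref{bracket} gives $[e,a]=Ea+\langle a,v-u\rangle_\h e$. So $\R e$ is a left ideal of $e^\perp$. When $E\neq0$, however, it is neither a right ideal nor a Lie ideal of $e^\perp$, and $e^\perp/\R e$ carries no induced bracket. A classical double extension has $[e,\h]=0$.

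If you nevertheless define $a\star b$ as the $\h$-component of $a\cdot b$, flatness yields only the twisted identity $\ass_\star(a,b,c)-\ass_\star(b,a,c)=(\langle Ab,a\rangle_\h-\langle Aa,b\rangle_\h)Ec$ from \eqref{curvature}. Moreover, $E\neq0$ and $A\neq A^*$ do occur together (Proposition~\ref{E}), so $(\h,\star)$ is genuinely not flat. Hence the data with $E\neq0$ fall outside the system you would write down; these are the cases (U3) and (NU2), parametrized in Tables~\ref{4} and~\ref{6}.

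Your plan contains no argument that these cases can be avoided, either by choosing another invariant null line or by falling back on the timelike branch. Such an escape happens to work in (NU2), where $\Lu_f=0$, but it needs proof in general. A final exhaustiveness check over your own case split cannot detect this loss, because the ansatz is already too narrow. The missing idea is the generalized extension of Proposition~\ref{double}: keep the full data $(E,F,A,u,v,w,\alpha,\lambda)$ over a merely Euclidean algebra $(\h,\star)$, and solve the coupled system \eqref{curvature}.

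\textbf{The solving machinery is also missing.} Write your 1-form as $\theta$, with $X\cdot V=\theta(X)V$, to avoid a clash with the paper's scalar $\alpha=\theta(e)$. The paper does not split by the values of $\theta$. It splits by the modular vector \eqref{hm}, using Proposition~\ref{pr1} to reach (U1)--(U3), (NU1), (NU2), and then applies Proposition~\ref{solvable}(2) and Lemmas~\ref{LB}, \ref{LF}, \ref{AM}.

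Your expectation about where the difficulty lies is reversed.
\begin{itemize}
\item If $\alpha\neq0$, Proposition~\ref{pr1} applied to $\mathbf h=\alpha f$ kills $u,v,w,A,F,\lambda$ at once. This case is also not Novikov, since \eqref{NE=0} forces $\alpha=0$.
\item If $u\neq0$ in the unimodular case, the third line of \eqref{curvature} at $a=u$ makes $E$ an eigenvector of $\ad_{\Lu_u}$ for the real eigenvalue $|u|^2$, so $E=0$.
\item The hardest case, (U3), has $\theta\equiv0$, so $e$ is parallel, but $E\neq0$. It requires the change of data \eqref{change} and the trace argument making $B+\Ru^\circ_a$ nilpotent.
\end{itemize}

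\textbf{Two smaller points.}
\begin{itemize}
\item You cannot show $\theta(V)=0$ for the invariant null vector itself: in (NU2), $e\cdot e=\alpha e$ with $\alpha\neq0$. Only rescaling by a non-constant function, or a different null line, can give $\nabla_VV=0$.
\item In the timelike case, $\Ru_e=0$ and $e\cdot e=0$ automatically, so the first criterion of your proposed split is vacuous. What closes that case is that $\{\Lu_u : u\in e^\perp\}$ is abelian by Theorem~\ref{milnor} and normalized by $\Lu_e$. It therefore commutes with $\Lu_e$ by Proposition~\ref{solvable}(2), which yields the single family of Table~\ref{1}.
\end{itemize}
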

	
		A notable and practically useful feature of the six models is that the conditions imposed on their parameters — such as commutativity relations between families of skew-symmetric endomorphisms, invariance conditions on representations, or compatibility relations between linear maps — are all of a purely linear-algebraic nature. They reduce to systems of linear equations that are straightforward to solve explicitly, making it easy to construct concrete examples in arbitrary dimension and to systematically explore the geometry of flat Lorentzian Lie groups far beyond the low-dimensional setting. As an illustration, we derive from our six general families a complete classification of flat Lorentzian Lie algebras in dimensions three and four, presented in Tables \ref{07}-\ref{8}.

		The proof of Theorem \ref{main} rests on three main ingredients. The first is the dichotomy of Proposition \ref{ee}, which reduces the problem to two structurally different cases. The second is the refinement of Milnor's theorem established in \cite[Theorem 3.1]{ABL} (see Theorem \ref{milnor}), which serves as the algebraic backbone of the entire argument: whenever a flat Euclidean Lie algebra appears in the course of the analysis, its orthogonal decomposition $\mathfrak{h} = \mathfrak{d} \oplus \mathfrak{a}$ and the associated representation $\rho : \mathfrak{a} \to \mathfrak{so}(\mathfrak{d})$ provide the primary tool for determining the remaining data step by step. The third ingredient is a set of three key lemmas — Lemmas \ref{LB}, \ref{LF}, and \ref{AM} — which allow us to solve the system of equations \eqref{curvature} arising from the flatness condition in full generality, despite its apparent complexity.
		
		To help the reader navigate the proof, we outline its structure. In Proposition \ref{ee}, we establish the fundamental dichotomy. In the timelike parallel case, the analysis is carried out in Proposition \ref{G1}, leading directly to the model $g_1$ of Table \ref{1}. In the Kundt case, Proposition \ref{double} shows that the flatness condition is equivalent to a system \eqref{curvature} of linear-algebraic equations on the data $(E, F, A, u, v, w, \alpha, \lambda)$ parametrizing the algebra, and that the Lie bracket is then given explicitly by \eqref{bracket}. The modular vector of the algebra is computed in formula \eqref{hm}, and its isotropy — required by Proposition \ref{pr1} in the non-unimodular case — drives a case split into five mutually exclusive and exhaustive subcases: three unimodular cases (U1), (U2), (U3), and two non-unimodular cases (NU1), (NU2). Section \ref{section4} solves system \eqref{curvature} in each of these five cases in turn, yielding the five remaining models $g_2$–$g_6$ of Tables \ref{2}–\ref{6}. The detailed computations supporting the proofs are collected in the appendix (Section \ref{section9}), which is organized so that each subsection corresponds to a specific proposition in the main body and can be read independently.

		The paper is organized as follows. Section \ref{section2} collects the preliminary results needed throughout: properties of skew-symmetric endomorphisms, the completeness criterion for flat pseudo-Riemannian Lie groups, the characterization of Novikov algebras, the refinement of Milnor's theorem, and the three key lemmas. It also contains the proof of Proposition \ref{ee}. Section \ref{section3} derives the structural consequences of the dichotomy and reduces the classification to solving system \eqref{curvature}. Section \ref{section4} constitutes the core of the paper and solves this system in each of the five cases, completing the proof of Theorem \ref{main}. Section \ref{section9} is an appendix containing the details of the lengthy computations. Finally, Sections \ref{sectionTables} and Section \ref{Section7}  present the six models in table form and derive the complete classification in dimensions three and four.

 In order to test the validity of our main result and to illustrate the practical usefulness of the six models of Tables \ref{1}–\ref{6}, we have systematically derived, for each family, at least one explicit example in dimension six and one in dimension seven. This was done by choosing concrete values of the parameters — representations $\rho$, skew-symmetric endomorphisms $E$, $F$, vectors $h$, $u$, $n$, $w$ — satisfying the linear-algebraic conditions listed in the corresponding table, and then computing the resulting Lie brackets explicitly. In each case, the flatness of the algebra — equivalently, the left-symmetry or the Novikov type of the Levi-Civita product — was independently verified using SageMath. All examples passed this verification, providing strong computational evidence for the correctness of the classification. These computations also confirm the claim made above that the conditions on the parameters of our models, being purely linear-algebraic in nature, are straightforward to solve and yield explicit flat Lorentzian Lie algebras in any prescribed dimension.

\section{Preliminaries}\label{section2}

In this section, we present several useful properties of the Lie algebra of skew-symmetric endomorphisms. We also recall some basic facts concerning non-unimodular flat pseudo-Riemannian Lie groups and Novikov algebras. Finally, we state a refined version of Milnor’s theorem, which will play a central role in the sequel, and we prove the first assertion of Theorem~\ref{main}.

\subsection{The Lie algebra of skew-symmetric endomorphisms}\label{subsection21}

We begin by recalling some standard facts on pseudo-Euclidean vector spaces. A \emph{pseudo-Euclidean vector space} is a finite-dimensional real vector space \(V\) endowed with a nondegenerate symmetric bilinear form \(\langle \cdot,\cdot \rangle\) of signature \((q,n-q)\). The cases \(q=0\) and \(q=1\) correspond to \emph{Euclidean} and \emph{Lorentzian} structures, respectively.

Let \((V,\langle \cdot,\cdot \rangle)\) be a pseudo-Euclidean vector space. For any endomorphism $F:V\too V$, we denote by $F^*$ its adjoint and by \(\mathfrak{so}(V)\) the Lie algebra of skew-symmetric endomorphisms of \(V\). 

Assume now that \(\langle \cdot,\cdot \rangle\) is positive definite. Then \(\mathfrak{so}(V)\) is naturally endowed with the symmetric bilinear form
\[
\langle A,B \rangle_{\mathfrak{so}} = -\mathrm{tr}(AB), \qquad A,B \in \mathfrak{so}(V),
\]
which is positive definite. Moreover, this form is \emph{quadratic} in the sense that, for all \(A,B,C \in \mathfrak{so}(V)\),
\begin{equation}\label{quadratic}
	\langle [A,B],C \rangle_{\mathfrak{so}} + \langle [A,C],B \rangle_{\mathfrak{so}} = 0.
\end{equation}
As a consequence, the nonzero eigenvalues of the operator
\[
\mathrm{ad}_A : \mathfrak{so}(V) \to \mathfrak{so}(V), \quad B \mapsto [A,B],
\]
are purely imaginary.

This property extends to the adjoint action of \(A\) on \(\mathfrak{gl}(V)\). Indeed, if \(\lambda_1,\ldots,\lambda_n\) are the eigenvalues of \(A\), then the eigenvalues of \(\mathrm{ad}_A\) are given by \(\lambda_i - \lambda_j\), \(1 \leq i,j \leq n\). Since \(A\) is skew-symmetric, its eigenvalues are purely imaginary, and the conclusion follows.

We will use frequently the following proposition later.  

\begin{pr}\label{solvable}
	Let \((V,\langle \cdot,\cdot \rangle)\) be a Euclidean vector space and \(\mathfrak{g} \subset \mathfrak{so}(V)\) a solvable Lie subalgebra. Then:
	\begin{enumerate}\item  \(\mathfrak{g}\) is abelian, and there exists an orthonormal basis
	\[
	(e_1,f_1,\ldots,e_r,f_r,g_1,\ldots,g_s)
	\]
	of \(V\) and linear forms \(\lambda_1,\ldots,\lambda_r \in \mathfrak{g}^*\) such that, for all \(A \in \mathfrak{g}\),
	\[
	A(e_i) = \lambda_i(A) f_i, \quad
	A(f_i) = -\lambda_i(A) e_i, \quad
	A(g_j) = 0,
	\]
	for \(i=1,\ldots,r\) and \(j=1,\ldots,s\).
	\item If  \(A \in \mathfrak{so}(V)\) satisfies \(\mathrm{ad}_A(\G) \subset \G\)
	then \(\mathrm{ad}_A(\G) = \{0\}\).
\end{enumerate}
	
\end{pr}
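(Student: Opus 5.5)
The plan is to use the invariant positive definite form $\langle\cdot,\cdot\rangle_{\so}$ together with the fact recalled above that $\ad_A$ has purely imaginary spectrum on $\mathfrak{gl}(V)$ for every $A\in\so(V)$.

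For part 1, we first show that $\g$ is abelian. Since $\g$ is solvable, $[\g,\g]$ is nilpotent by Lie's theorem applied to the complexification. Take $B\in[\g,\g]$ and expand $\ad$ over the adjoint representation of $\g$. By Lie's theorem, $\ad_B$ acts nilpotently on $\g_{\mathbb C}$. On the other hand, the restriction of $\langle\cdot,\cdot\rangle_{\so}$ to $\g$ is positive definite and $\ad$-invariant by \eqref{quadratic}, so $\ad_B|_{\g}$ is skew-symmetric. A nilpotent skew-symmetric operator on a Euclidean space vanishes, hence $[\g,\g]\subset\mathfrak z(\g)$.

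Next take $B=[X,Y]$ with $X,Y\in\g$. Then $B$ commutes with $X$, so $\tr(B^2)=\tr([X,Y]B)=\tr(X[Y,B])=0$ by cyclicity, because $[Y,B]=0$. Thus $\langle B,B\rangle_{\so}=0$, which gives $B=0$, and $\g$ is abelian.

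An abelian family of skew-symmetric endomorphisms of a Euclidean space consists of commuting normal operators, so it is simultaneously diagonalizable over $\mathbb C$. Its eigenvalues are purely imaginary and come in conjugate pairs $\pm i\lambda_k(A)$, with each $\lambda_k$ linear in $A$. Taking real and imaginary parts of a common unitary eigenbasis gives the orthonormal basis $(e_i,f_i,g_j)$. Here the $g_j$ span $\bigcap_{A\in\g}\ker A$, and $A e_i=\lambda_i(A)f_i$ and $A f_i=-\lambda_i(A)e_i$.

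For part 2, suppose $[A,\g]\subset\g$ for some $A\in\so(V)$, and fix $X\in\g$. Put $D=\ad_A|_{\g}$; this is an endomorphism of $\g$, skew-symmetric for $\langle\cdot,\cdot\rangle_{\so}$. Set $Y=[A,X]\in\g$. Using part 1 and the Jacobi identity, $[Y,X']=[A,[X,X']]-[X,[A,X']]=0$ for every $X'\in\g$, since $[A,X']\in\g$ and $\g$ is abelian.

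To show $Y=0$, compute $\langle Y,Y\rangle_{\so}=\langle [A,X],Y\rangle_{\so}=-\langle X,[A,Y]\rangle_{\so}$ by invariance. Since $Y=[A,X]$ and $[A,Y]\in\g$, this alone does not finish. The cleaner route is the following: $\ad_X$ acting on $\mathfrak{gl}(V)$ is semisimple with purely imaginary spectrum, because $X$ is normal. We have $\ad_X(A)=-Y$ and $\ad_X(Y)=0$, so $\ad_X^2(A)=0$. A semisimple operator satisfies $\ker\ad_X^2=\ker\ad_X$, hence $\ad_X(A)=0$, that is, $Y=0$. Therefore $\ad_A(\g)=0$.

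The main obstacle is the abelianity step of part 1, in particular justifying the trace identity, which needs $B$ to commute with $X$. That is exactly why we first established $[\g,\g]\subset\mathfrak z(\g)$; if that route falters, I would instead apply the semisimplicity argument of part 2 to $X\in\g$ and $Y\in\g$ to conclude directly.
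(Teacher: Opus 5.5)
Your proof is correct, but in both parts it takes a different route from the paper's. For part 1 the paper only says the result is a classical consequence of Lie's theorem. The standard argument behind that remark applies Lie's theorem to $\g$ acting on the complexification of $V$. Because the operators are skew-Hermitian, the Hermitian orthogonal complement of a common eigenline is again $\g$-invariant, and induction then diagonalizes $\g$ simultaneously, so abelianness and the normal form come out together. You instead apply Lie's theorem to the adjoint representation and use the positive definite invariant form $\langle\cdot,\cdot\rangle_{\so}$ to push $[\g,\g]$ into the center. You then kill $[\g,\g]$ with $\tr([X,Y]B)=\tr(X[Y,B])$, and only afterwards invoke the spectral theorem for commuting normal operators. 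This is longer, but it isolates the intrinsic fact that a solvable Lie algebra carrying a positive definite $\ad$-invariant inner product is abelian, independently of its action on $V$. One detail needs care when you extract the real basis. Use one weight $\mu$ from each pair $\pm\mu\neq 0$, take $\sqrt{2}$ times the real and imaginary parts of a unitary basis of the joint eigenspace of $\mu$ only, and add a real orthonormal basis of the common kernel. Taking real and imaginary parts of a full eigenbasis would overcount.

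For part 2 the paper's argument is one line: $\langle [A,X],Y\rangle_{\so}=\langle A,[X,Y]\rangle_{\so}=0$ for all $X,Y\in\g$, so $[A,X]\in\g\cap\g^{\perp}=\{0\}$. Your invariance computation stalled only because you used the skew-symmetry of $\ad_A$. Using that of $\ad_X$ instead gives exactly the identity above and finishes at once. Your replacement argument is also valid: $\ad_X^2(A)=0$ together with the semisimplicity of $\ad_X$ on $\mathfrak{gl}(V)$ forces $\ad_X(A)=0$. It relies on the diagonalizability of $X$ rather than on the definiteness of the form on $\g$. The Jacobi computation giving $[Y,X']=0$ is superfluous, since $Y\in\g$ and $\g$ is already known to be abelian.
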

\begin{proof}
	\begin{enumerate}
		\item It is a classical consequence of Lie's Theorem.
		\item for all \(B,C \in \G\), one has
		\[
		\langle [A,B], C \rangle_{\mathfrak{so}} = \langle A, [B,C] \rangle_{\mathfrak{so}} = 0,
		\]
		which implies \([A,B] = 0\) for all \(B \in \G\).\qedhere
	\end{enumerate}
\end{proof}

\subsection{Completeness of flat pseudo-Riemannian Lie groups and the modular vector}	\label{subsection22}

		Let \((G,h)\) be a pseudo-Riemannian Lie group and let \(\mathfrak{g}\) denote its Lie algebra, identified with the space of left-invariant vector fields. The metric \(h\) induces a nondegenerate symmetric bilinear form \(\langle \cdot,\cdot \rangle\) on \(\mathfrak{g}\). The Levi-Civita connection defines a bilinear product on \(\mathfrak{g}\), called the \emph{Levi-Civita product}, given by
		\begin{equation}\label{Levi-Civita}
		2\langle u \cdot v, w \rangle
		= \langle [u,v], w \rangle + \langle [w,u], v \rangle + \langle [w,v], u \rangle.
		\end{equation}
		
		For \(u \in \mathfrak{g}\), we denote by \(\mathrm{L}_u, \mathrm{R}_u \in \mathrm{End}(\mathfrak{g})\) the left and right multiplication operators defined by
		\[
		\mathrm{L}_u(v) = u \cdot v, \qquad \mathrm{R}_u(v) = v \cdot u.
		\]
		Then \(\mathrm{L}_u\) is skew-symmetric with respect to \(\langle \cdot,\cdot \rangle\), and the adjoint representation is given by
		\[
		\mathrm{ad}_u = \mathrm{L}_u - \mathrm{R}_u, \qquad \mathrm{ad}_u(v) = [u,v].
		\]
		The curvature tensor at the identity is
		\[
		\mathrm{K}(u,v) = \mathrm{L}_{[u,v]} - [\mathrm{L}_u,\mathrm{L}_v].
		\]
		The metric is flat if and only if \(\mathrm{K}=0\). In this case, \((G,h)\)  is called a \emph{flat pseudo-Riemannian Lie group}. 
		
		Since the study of flat pseudo-Riemannian Lie groups essentially reduces to that of their Lie algebras, we introduce the notion of a \emph{flat pseudo-Euclidean Lie algebra}. By this we mean a Lie algebra \((\mathfrak{g},[\cdot,\cdot])\) endowed with a pseudo-Euclidean metric \(\langle \cdot,\cdot \rangle\) such that, for all \(u,v \in \mathfrak{g}\),
		\[
		\mathrm{L}_{[u,v]} - [\mathrm{L}_u,\mathrm{L}_v] = 0,
		\]
		where \(\mathrm{L}_u\) denotes the left multiplication operator associated with the Levi-Civita product defined by \eqref{Levi-Civita}.

		In contrast to the Riemannian case, where left-invariant metrics are always geodesically complete, completeness in the pseudo-Riemannian setting is more subtle, even in the flat case. A flat pseudo-Riemannian Lie group is geodesically complete if and only if it is unimodular \cite{segal}. Equivalently, its Lie algebra \(\mathfrak{g}\) is unimodular, that is,
		\[
		\mathrm{tr}(\mathrm{ad}_u) = 0 \quad \text{for all } u \in \mathfrak{g}.
		\]
		This condition is equivalent to the vanishing of the \emph{modular vector} \(\mathbf{h} \in \mathfrak{g}\), defined by
		\begin{equation}\label{h}
			\langle u, \mathbf{h} \rangle = \mathrm{tr}(\mathrm{ad}_u) = -\mathrm{tr}(\mathrm{R}_u), \qquad u \in \mathfrak{g}.
		\end{equation}
		The modular vector of a flat pseudo-Euclidean Lie algebra enjoys several important properties, summarized in the following proposition proved in \cite{BH}.
			\begin{pr}[\cite{BH}]\label{pr1}
			Let  $(\G,[\;,\;],\prs)$ be a
			flat pseudo-Euclidean  Lie algebra. Then:\begin{enumerate}\item The modular vector satisfies $\mathbf{h}\in[\G,\G]\cap[\G,\G]^\perp$ and 
				$\mathrm{R}_{\mathbf{h}}$ is symmetric with respect to $\prs$.
				In
				particular, if $\G$ is non-unimodular then $[\G,\G]$ is degenerate,  $\langle
				\mathbf{h},\mathbf{h}\rangle=0$ and $\mathbf{h}.\mathbf{h}=0$.\item  If $\prs$ is Lorentzian then $H=\mathrm{span}\{\mathbf{h}\}$ is a two-sided ideal $($with respect to the Levi-Civita product$)$ in $H^\perp$.\end{enumerate}
			
		\end{pr}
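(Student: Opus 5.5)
The plan is to work entirely at the level of traces of the multiplication operators, using three facts. First, each $\mathrm{L}_u$ is skew-symmetric, so $\mathrm{tr}(\mathrm{L}_u)=0$; this is why $\langle u,\mathbf{h}\rangle=\mathrm{tr}(\mathrm{ad}_u)=-\mathrm{tr}(\mathrm{R}_u)$. Second, flatness gives left-symmetry, which can be rewritten as the operator identity
\[
[\mathrm{L}_u,\mathrm{R}_v]=\mathrm{R}_{u\cdot v}-\mathrm{R}_v\mathrm{R}_u .
\]
Third, taking traces in this identity yields the key relation $\mathrm{tr}(\mathrm{R}_{u\cdot v})=\mathrm{tr}(\mathrm{R}_v\mathrm{R}_u)$, which is symmetric in $u,v$.

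\emph{Part 1.}
\begin{enumerate}
\item $\mathbf{h}\perp[\G,\G]$: this is immediate, since $\mathrm{tr}(\mathrm{ad}_{[u,v]})=\mathrm{tr}[\mathrm{ad}_u,\mathrm{ad}_v]=0$.
\item $\mathbf{h}\in[\G,\G]$: it suffices to show $\mathrm{tr}(\mathrm{R}_u)=0$ for every $u\in[\G,\G]^\perp$. For such $u$, the term $\langle[w,v],u\rangle$ drops out of \eqref{Levi-Civita}, and one reads off $\mathrm{R}_u=-\tfrac12(\mathrm{ad}_u-\mathrm{ad}_u^*)$. Hence $\mathrm{tr}(\mathrm{R}_u)=0$.
\item $\mathrm{R}_{\mathbf{h}}$ is symmetric: the key relation gives $\langle u\cdot v,\mathbf{h}\rangle=\langle v\cdot u,\mathbf{h}\rangle$. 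By skewness of $\mathrm{L}_u$, the left side equals $-\langle v,u\cdot\mathbf{h}\rangle$. So $\langle v,\mathrm{R}_{\mathbf{h}}u\rangle=\langle u,\mathrm{R}_{\mathbf{h}}v\rangle$.
\item Consequences when $\mathbf{h}\neq0$: from $\mathbf{h}\in[\G,\G]\cap[\G,\G]^\perp$ we get $\langle\mathbf{h},\mathbf{h}\rangle=0$ and $[\G,\G]$ degenerate. Finally,
\[
\langle \mathbf{h}\cdot\mathbf{h},w\rangle=\langle \mathbf{h},\mathrm{R}_{\mathbf{h}}w\rangle=\langle\mathbf{h},w\cdot\mathbf{h}\rangle=0,
\]
by skewness of $\mathrm{L}_w$. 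So $\mathbf{h}\cdot\mathbf{h}=0$.
\end{enumerate}

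\emph{Part 2 (Lorentzian case).} Assume $\mathbf{h}\ne0$, so $\mathbf{h}$ is null and $H^\perp/H$ is Euclidean.

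For right multiplication: $\langle\mathrm{R}_{\mathbf{h}}u,\mathbf{h}\rangle=\langle u,\mathbf{h}\cdot\mathbf{h}\rangle=0$, so $\mathrm{R}_{\mathbf{h}}(\G)\subset H^\perp$. Also $\mathrm{R}_{\mathbf{h}}(H)=0$. Hence $\mathrm{R}_{\mathbf{h}}$ induces a symmetric endomorphism $S$ of the Euclidean space $H^\perp/H$. In a filtration $H\subset H^\perp\subset\G$, the diagonal blocks of $\mathrm{R}_{\mathbf{h}}$ are $0$, $S$, $0$. Therefore
\[
\mathrm{tr}(S^2)=\mathrm{tr}(\mathrm{R}_{\mathbf{h}}^2)=\mathrm{tr}(\mathrm{R}_{\mathbf{h}\cdot\mathbf{h}})=0,
\]
so $S=0$. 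This gives $H^\perp\cdot\mathbf{h}\subset H$.

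For left multiplication: $\mathrm{L}_{\mathbf{h}}$ kills $\mathbf{h}$ and preserves $H^\perp$. So it induces a skew-symmetric endomorphism $T$ of $H^\perp/H$, and I must show $T=0$, i.e.\ $[\mathbf{h},H^\perp]\subset H$. This is the main obstacle. I would first try to exploit $\mathbf{h}\in[\G,\G]$: flatness makes $\mathrm{L}:\G\to\so(\G)$ a representation, so $\mathrm{L}_{\mathbf{h}}$ is a sum of commutators $[\mathrm{L}_u,\mathrm{L}_v]$. One would then combine this with Proposition~\ref{solvable} and the solvability of $\G$.

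A concrete attempt runs as follows. Take $\mathrm{tr}$ of the operator identity with $v=\mathbf{h}$ and $u\in H^\perp$. Then use $\mathrm{R}_{u\cdot\mathbf{h}}$ with $u\cdot\mathbf{h}\in H$, so $\mathrm{R}_{u\cdot\mathbf{h}}$ is a multiple of $\mathrm{R}_{\mathbf{h}}$. From this, derive that $\mathrm{tr}(T^2)$ is expressible through traces of $\mathrm{R}_{\mathbf{h}}$-products that vanish. If that fails, I would compute $\mathrm{ass}(\mathbf{h},u,\mathbf{h})=\mathrm{ass}(u,\mathbf{h},\mathbf{h})$ for $u\in H^\perp$ to relate $T$ to $S=0$ directly.
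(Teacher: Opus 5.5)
The paper gives no proof of this proposition; it only quotes it from \cite{BH}. So your argument has to stand on its own, and it has two genuine gaps.

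\textbf{First gap: the second item of Part 1 is false as written.} Take $u\perp[\G,\G]$ and drop $\langle [w,v],u\rangle$ from \eqref{Levi-Civita}. Since $u$ is the \emph{left} factor there, this gives $2\langle u\cdot v,w\rangle=\langle(\mathrm{ad}_u-\mathrm{ad}_u^*)v,w\rangle$. That computes $\mathrm{L}_u=\frac12(\mathrm{ad}_u-\mathrm{ad}_u^*)$, which is traceless for trivial reasons. The right multiplication is $\mathrm{R}_u=\mathrm{L}_u-\mathrm{ad}_u=-\frac12(\mathrm{ad}_u+\mathrm{ad}_u^*)$. This operator is symmetric, as your third item predicts: in general $\langle(\mathrm{R}_u-\mathrm{R}_u^*)v,w\rangle=\langle[w,v],u\rangle$. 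Its trace is $-\mathrm{tr}(\mathrm{ad}_u)$, so the argument is circular.

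The step also never uses flatness, and the claim fails without it. On the Euclidean Lie algebra $[x,y]=y$ with $(x,y)$ orthonormal, one gets $\mathbf{h}=x\notin[\G,\G]=\mathbb{R}y$. Consequently $\langle\mathbf{h},\mathbf{h}\rangle=0$, the degeneracy of $[\G,\G]$, and the nullity of $\mathbf{h}$ that Part 2 relies on are all unproved.

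Your own operator identity repairs this:
\begin{itemize}
\item For $u\in[\G,\G]^\perp$, \eqref{Levi-Civita} gives $2\langle u\cdot u,w\rangle=2\langle[w,u],u\rangle=0$, so $u\cdot u=0$.
\item Your identity with $v=u$ then reads $[\mathrm{L}_u,\mathrm{R}_u]=-\mathrm{R}_u^2$. Hence $[\mathrm{L}_u,\mathrm{R}_u^k]=-k\,\mathrm{R}_u^{k+1}$, and $\mathrm{tr}(\mathrm{R}_u^m)=0$ for all $m\geq 2$.
\item The power sums of order $\geq2$ of the eigenvalues of $\mathrm{R}_u$ all vanish, which forces every eigenvalue to be $0$ (Vandermonde). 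So $\mathrm{R}_u$ is nilpotent and $\mathrm{tr}(\mathrm{R}_u)=0$.
\item This means $\langle u,\mathbf{h}\rangle=0$ for all $u\in[\G,\G]^\perp$, that is, $\mathbf{h}\in([\G,\G]^\perp)^\perp=[\G,\G]$.
\end{itemize}
Your first and third items, and your proof that $\mathbf{h}\cdot\mathbf{h}=0$, are correct.

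\textbf{Second gap: the left-multiplication half of Part 2 is left open.} You need to show that the induced skew map $T$ of $\mathrm{L}_{\mathbf{h}}$ on $H^\perp/H$ vanishes. Your suggested trace manipulations involve only $\mathrm{R}$ and do not control $T$. This is exactly where the Lorentzian input, solvability of $\G$ \cite{della}, is needed:
\begin{itemize}
\item $\mathrm{L}$ is a representation, so $\mathrm{L}_{\mathbf{h}}\in\mathrm{L}([\G,\G])=[\mathrm{L}(\G),\mathrm{L}(\G)]$.
\item $\mathrm{L}(\G)$ is solvable, so by Lie's theorem (after complexification) every element of its derived algebra is nilpotent. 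Together with $\mathbf{h}\in[\G,\G]$ from the repaired step, $\mathrm{L}_{\mathbf{h}}$ is nilpotent.
\item Then $T$ is a nilpotent skew-symmetric endomorphism of the Euclidean space $H^\perp/H$, hence $T=0$. This gives $\mathbf{h}\cdot H^\perp\subset H$.
\end{itemize}
Your treatment of the right-multiplication half, via $\mathrm{tr}(S^2)=\mathrm{tr}(\mathrm{R}_{\mathbf{h}}^2)=\mathrm{tr}(\mathrm{R}_{\mathbf{h}\cdot\mathbf{h}})=0$, is correct once $\mathbf{h}$ is known to be null.
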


		This proposition serves as a key ingredient in the case analysis presented at the beginning of Section~\ref{section4}.
		
		\subsection{Pseudo-Euclidean Novikov algebras as a subclass of flat pseudo-Euclidean Lie algebras}\label{subsection23}

		Let \((\mathfrak{g},[\cdot,\cdot],\langle \cdot,\cdot \rangle)\) be a flat pseudo-Euclidean Lie algebra, and denote by \(\cdot\) its Levi-Civita product. The flatness condition is equivalent to
		\[
		\mathrm{ass}(u,v,w) = \mathrm{ass}(v,u,w), \qquad u,v,w \in \mathfrak{g},
		\]
		where \(\mathrm{ass}(u,v,w) = (u \cdot v)\cdot w - u \cdot (v \cdot w)\) is the associator. Thus, \((\mathfrak{g},\cdot)\) is a left-symmetric algebra.
		
		An important subclass of left-symmetric algebras is given by Novikov algebras. A left-symmetric algebra \((\mathcal{A},\cdot)\) is called a \emph{Novikov algebra} if
		\begin{equation}\label{Novikov}
			(x \cdot y)\cdot z = (x \cdot z)\cdot y, \qquad x,y,z \in \mathcal{A}.
		\end{equation}
		Novikov algebras arise naturally in the study of Poisson brackets of hydrodynamical type \cite{N}. Flat pseudo-Euclidean Lie algebras whose Levi-Civita product defines a Novikov structure were investigated in \cite{L3,L2}. We recall the following characterization.
		
		\begin{pr}\label{Novikovpr}
			Let \((\mathfrak{g},[\cdot,\cdot],\langle \cdot,\cdot \rangle)\) be a pseudo-Euclidean Lie algebra and let \(\cdot\) denote its Levi-Civita product. Then \((\mathfrak{g},\cdot)\) is a Novikov algebra if and only if, for all \(u,v \in \mathfrak{g}\),
			\[
			\mathrm{L}_{u \cdot v} = [\mathrm{L}_u,\mathrm{L}_v] = 0.
			\]
		\end{pr}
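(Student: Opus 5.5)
The plan is to rewrite both conditions in operator form, using two facts from the setting above: each $\mathrm{L}_u$ is skew-symmetric with respect to $\langle\cdot,\cdot\rangle$, and flatness (left-symmetry) is equivalent to $\mathrm{L}_{[u,v]}=[\mathrm{L}_u,\mathrm{L}_v]$. Here "$(\mathfrak{g},\cdot)$ is a Novikov algebra" means left-symmetric together with \eqref{Novikov}, and \eqref{Novikov} can be written as $\mathrm{L}_{x\cdot y}=\mathrm{R}_y\mathrm{L}_x$ for all $x,y$.

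\emph{Sufficiency.} Assume $\mathrm{L}_{u\cdot v}=0$ and $[\mathrm{L}_u,\mathrm{L}_v]=0$ for all $u,v$. Then
\[
\mathrm{ass}(x,y,z)=\mathrm{L}_{x\cdot y}z-\mathrm{L}_x\mathrm{L}_yz=-\mathrm{L}_x\mathrm{L}_yz ,
\]
which is symmetric in $x,y$ because the $\mathrm{L}$'s commute, so $(\mathfrak{g},\cdot)$ is left-symmetric. For the Novikov identity, both sides vanish: $(x\cdot y)\cdot z=\mathrm{L}_{x\cdot y}z=0$ and $(x\cdot z)\cdot y=\mathrm{L}_{x\cdot z}y=0$.

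\emph{Necessity.} This is the only step requiring an idea: combine the skew-symmetry of the left multiplications with \eqref{Novikov} to force $\mathrm{L}_{x\cdot y}=0$. For $x,y,z,w\in\mathfrak{g}$ we alternately apply skew-symmetry (of $\mathrm{L}_{x\cdot y}$, $\mathrm{L}_{x\cdot w}$, $\mathrm{L}_{x\cdot z}$) and \eqref{Novikov}:
\[
\langle (x\cdot y)\cdot z,w\rangle=-\langle z,(x\cdot y)\cdot w\rangle=-\langle z,(x\cdot w)\cdot y\rangle=\langle (x\cdot w)\cdot z,y\rangle .
\]
Continuing in the same way,
\[
\langle (x\cdot w)\cdot z,y\rangle=\langle (x\cdot z)\cdot w,y\rangle=-\langle w,(x\cdot z)\cdot y\rangle=-\langle w,(x\cdot y)\cdot z\rangle .
\]
Hence $\langle \mathrm{L}_{x\cdot y}z,w\rangle=-\langle \mathrm{L}_{x\cdot y}z,w\rangle$, so this quantity vanishes for all $z,w$. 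Nondegeneracy of $\langle\cdot,\cdot\rangle$ then gives $\mathrm{L}_{x\cdot y}=0$ for all $x,y$.

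Consequently $\mathrm{L}_{[x,y]}=\mathrm{L}_{x\cdot y}-\mathrm{L}_{y\cdot x}=0$. Since $(\mathfrak{g},\cdot)$ is left-symmetric, flatness gives $[\mathrm{L}_x,\mathrm{L}_y]=\mathrm{L}_{[x,y]}=0$. Together with $\mathrm{L}_{x\cdot y}=0$, this is exactly the condition in Proposition~\ref{Novikovpr}.
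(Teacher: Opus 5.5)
Your proof is correct and complete. The paper gives no argument for this proposition; it only recalls it from \cite{L3,L2,oubba}. So there is no in-paper proof to compare with, and your proposal supplies a self-contained one.

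Sufficiency is immediate as you write it. In fact it shows that the condition by itself already forces left-symmetry, hence flatness.

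In the necessity direction, your chain of six equalities is the standard fact about a particular trilinear form. Fix $x$ and set $T(y,z,w)=\langle (x\cdot y)\cdot z,w\rangle$. Then $T$ is symmetric in $(y,z)$ by the Novikov identity and skew-symmetric in $(z,w)$ because $\mathrm{L}_{x\cdot y}$ is skew, and any form with both properties vanishes. Nondegeneracy then gives $\mathrm{L}_{x\cdot y}=0$. Only nondegeneracy of $\langle\cdot,\cdot\rangle$ is used, so the argument works in every signature, as the pseudo-Euclidean hypothesis requires.

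For the commutation $[\mathrm{L}_x,\mathrm{L}_y]=\mathrm{L}_{x\cdot y}-\mathrm{L}_{y\cdot x}=0$ you use left-symmetry. That is legitimate, since left-symmetry is part of the paper's definition of a Novikov algebra, and you were right to state this reading explicitly.
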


		\subsection{A refinement of Milnor's theorem}\label{subsection24}
		
		In  \cite{Milnor}, Milnor  proved that a Lie group admits a flat left-invariant Riemannian metric if and only if its Lie algebra decomposes orthogonally as $\mathfrak{g} = \mathfrak{b} \oplus \mathfrak{u}$, where $\mathfrak{b}$ is an abelian subalgebra, $\mathfrak{u}$ is an abelian ideal, and $\mathrm{ad}_b$ is skew-symmetric for every $b \in \mathfrak{b}$.
		
		We present here a refinement of this theorem which will be crucial in the sequel.  This refinement is established in \cite[Theorem 3.1]{ABL}.
		
		\begin{theo}\label{milnor}
			Let \((\G,\br,\prs)\) be a Euclidean Lie algebra and \(\cdot\) its Levi-Civita product. Then \((\G,\br,\prs)\) is flat if and only if the following conditions hold:
			\begin{enumerate}
				\item \([\mathfrak{g},\mathfrak{g}] = \mathfrak{g} \cdot \mathfrak{g}\),
				\item the derived ideal \(\mathfrak{d} := [\mathfrak{g},\mathfrak{g}]\) and its orthogonal complement \(\mathfrak{a} := \mathfrak{d}^{\perp}\) are abelian, and \(\dim \mathfrak{d} = 2r\).
			\end{enumerate}
			In this case, $(\G,\br)$ is unimodular and there exists a representation of the abelian Lie algebra \(\mathfrak{a}\),
			\[
			\rho : \mathfrak{a} \to \mathfrak{so}(\mathfrak{d})
			\]
			such that $\ker\rho$ is the center of $\G$, 
			\[
			\bigcap_{a \in \mathfrak{a}} \ker \rho(a) = \{0\},
			\]
			and the Levi-Civita product is given by
			\begin{equation}\label{Levi-Milnor}
			u \cdot v =
			\begin{cases}
				0 & \text{if } u \in \mathfrak{d} \text{ or } v \in \mathfrak{a},\\
				\rho(u)(v) & \text{if } u \in \mathfrak{a},\ v \in \mathfrak{d}.
			\end{cases}
			\end{equation}
			In particular, for all \(u,v \in \mathfrak{g}\),
			\[
			\mathrm{L}_{[u,v]} = \mathrm{L}_{u \cdot v} = [\mathrm{L}_u,\mathrm{L}_v] = 0,
			\]
			and hence \((\mathfrak{g},\cdot)\) is a Novikov algebra.
		\end{theo}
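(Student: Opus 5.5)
We prove the two implications of Theorem~\ref{milnor} separately, and organize the proof around the Levi-Civita product. For the ``if'' direction, assume conditions 1 and 2. We first show that the Lie algebra structure forces the product \eqref{Levi-Milnor}. Since \(\mathfrak{a}\) and \(\mathfrak{d}\) are abelian, the only nonzero brackets are \([a,x]\) with \(a\in\mathfrak{a}\), \(x\in\mathfrak{d}\), and these lie in \(\mathfrak{d}\). Set \(\rho(a)=\mathrm{ad}_a|_{\mathfrak{d}}\).

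We then apply \eqref{Levi-Civita} case by case, using that \(\mathfrak a\perp\mathfrak d\).
\begin{itemize}
\item For \(u,v\in\mathfrak d\): \(2\langle u\cdot v,w\rangle=\langle[w,u],v\rangle+\langle[w,v],u\rangle\), which vanishes only when \(\rho(w)\) is skew.
\item For \(u\in\mathfrak a\), \(v\in\mathfrak d\): we get \(u\cdot v=\rho(u)v\), plus a term coming from the symmetric part of \(\rho(u)\).
\end{itemize}
The crux is therefore to prove that each \(\rho(a)\) is skew-symmetric. This is exactly where condition 1, \(\mathfrak g\cdot\mathfrak g=[\mathfrak g,\mathfrak g]\), should enter, but in the ``if'' direction it is circular. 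So we argue directly instead.

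Write \(\rho(a)=S_a+K_a\) with \(S_a\) symmetric and \(K_a\) skew. The \(\rho(a)\) commute, since \(\rho\) is a representation of an abelian algebra. Computing \(\mathrm L_u\) from \eqref{Levi-Civita} in terms of the \(S_a,K_a\), we impose flatness \(\mathrm K(a,x)=0\) for \(a\in\mathfrak a\), \(x\in\mathfrak d\). Taking a trace or a norm of the resulting identity, we expect a relation of the form \(\mathrm{tr}(S_a^2)=0\), which forces \(S_a=0\) in the Euclidean setting. This step is the main obstacle. The first thing to try is evaluating \(\langle \mathrm K(a,x)y,z\rangle\) with \(y,z\in\mathfrak d\) and summing over an orthonormal basis of \(\mathfrak d\).

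Once every \(\rho(a)\) is skew, the ``if'' direction closes quickly. Formula \eqref{Levi-Milnor} follows, so \(\mathrm L_x=0\) for \(x\in\mathfrak d\) and \(\mathrm L_a=\rho(a)\oplus 0\). These operators commute, so \([\mathrm L_u,\mathrm L_v]=0\), and \(\mathrm L_{[u,v]}=0\) because \([u,v]\in\mathfrak d\). Hence \(\mathrm K=0\). The same computation shows \(\mathrm L_{u\cdot v}=0\), since \(u\cdot v\in\mathfrak d\). Proposition~\ref{Novikovpr} then gives the Novikov property.

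For the ``only if'' direction, assume the metric is flat and start from Milnor's theorem, which gives the orthogonal decomposition \(\mathfrak g=\mathfrak b\oplus\mathfrak u\) with \(\mathrm{ad}_b\) skew. Skewness makes \(\mathrm{ad}_b(\mathfrak u)\) orthogonal to \(\ker\mathrm{ad}_b|_{\mathfrak u}\). This yields
\[
\mathfrak d=[\mathfrak g,\mathfrak g]=\sum_b \mathrm{Im}\,\mathrm{ad}_b|_{\mathfrak u}\subset\mathfrak u
\]
and
\[
\mathfrak d^\perp=\mathfrak b\oplus\bigcap_b\ker\mathrm{ad}_b|_{\mathfrak u},
\]
and both are abelian. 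On \(\mathfrak d\), the restrictions \(\mathrm{ad}_b\) form a commuting family of skew maps. By Proposition~\ref{solvable}, they have the normal form there with no common kernel, so \(\dim\mathfrak d\) is even.

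The product is then \eqref{Levi-Milnor} by the computation above, which gives \(\mathfrak g\cdot\mathfrak g=\sum\mathrm{Im}\,\rho(a)=\mathfrak d\), i.e.\ condition 1. The same normal form gives \(\bigcap_a\ker\rho(a)=\{0\}\). An element \(a\in\mathfrak a\) is central exactly when \(\rho(a)=0\); conversely, any central element \(x+a\) has \([x+a,\mathfrak a]=0\), which forces \(x=0\) by the trivial common kernel. Hence \(\ker\rho\) is the center. Finally, unimodularity holds because each \(\mathrm{ad}_u\) is skew (for \(u\in\mathfrak a\)) or nilpotent (for \(u\in\mathfrak d\)), so \(\mathrm{tr}(\mathrm{ad}_u)=0\) in both cases.
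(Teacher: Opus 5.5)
\textbf{The gap is in the ``if'' direction.} Your ``only if'' direction is sound. (The paper does not reprove this theorem; it quotes it from [ABL, Theorem 3.1].) Starting from Milnor's decomposition $\mathfrak g=\mathfrak b\oplus\mathfrak u$, you correctly obtain $\mathfrak d=\sum_b\mathrm{ad}_b(\mathfrak u)$ and $\mathfrak d^\perp=\mathfrak b\oplus\bigcap_b\ker\mathrm{ad}_b|_{\mathfrak u}$. The skewness of $\rho$, the evenness of $\dim\mathfrak d$, the description of the center and unimodularity also follow correctly.

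In the ``if'' direction, flatness is what you must prove. So imposing $\mathrm K(a,x)=0$ to force $S_a=0$ is circular. Condition 1, which you set aside as circular, is actually a hypothesis of this direction, and it is exactly what is needed. No argument that uses only condition 2 can work, because the implication fails without condition 1. Take $\mathfrak g=\mathrm{span}\{a,x_1,x_2\}$ orthonormal with $[a,x_1]=x_1$ and $[a,x_2]=x_2$. Then $\mathfrak d=\mathrm{span}\{x_1,x_2\}$ and $\mathfrak a=\mathbb R a$ are abelian and $\dim\mathfrak d=2$. Yet $x_1\cdot x_1=a$ and $\mathrm L_a=0$, so $\mathrm K(a,x_1)=\mathrm L_{x_1}\neq 0$; this is real hyperbolic $3$-space. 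Hence the ``main obstacle'' you flagged, reaching $\mathrm{tr}(S_a^2)=0$ by a trace or norm argument, cannot be overcome along the route you describe.

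\textbf{The missing step is one line, and your first bullet already contains the formula.} For $x,y\in\mathfrak d$ and $a\in\mathfrak a$, since $[x,y]=0$,
\[
2\langle x\cdot y,a\rangle=\langle\rho(a)x,y\rangle+\langle\rho(a)y,x\rangle=2\langle S_a x,y\rangle .
\]
Condition 1 gives $x\cdot y\in\mathfrak g\cdot\mathfrak g=\mathfrak d\perp\mathfrak a$, so $S_a=0$ for every $a\in\mathfrak a$.

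Once each $\rho(a)$ is skew, the rest of your ``if'' argument goes through as written. You get the product formula, $\mathrm L_x=0$ for $x\in\mathfrak d$, and commuting operators $\mathrm L_a=\rho(a)\oplus 0$; hence $\mathrm K=0$ and the Novikov property hold. Finally,
\[
\bigcap_a\ker\rho(a)=\bigl(\textstyle\sum_a\mathrm{Im}\,\rho(a)\bigr)^{\perp}\cap\mathfrak d=(\mathfrak g\cdot\mathfrak g)^{\perp}\cap\mathfrak d=\{0\},
\]
which again comes from condition 1.
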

		
		\begin{remark}
			The condition \(\bigcap_{a \in \mathfrak{a}} \ker \rho(a) = \{0\}\) is equivalent to \(\sum_{a \in \mathfrak{a}} \mathrm{Im}\,\rho(a) = \mathfrak{d}\) which is equivalent to \([\mathfrak{g},\mathfrak{g}] = \mathfrak{g} \cdot \mathfrak{g}\).
		\end{remark}

		Theorem \ref{milnor} will play a pervasive role throughout the paper. Whenever a flat Euclidean Lie algebra $(\h, \star, \langle\cdot,\cdot\rangle_\h)$ appears — either in Proposition \ref{G1} as the full Euclidean part $e^\perp$ in the timelike case, or as the algebra $(\h, \star)$ arising in the Kundt case (Propositions \ref{u}-\ref{NU2}) — its orthogonal decomposition $\h = \df \oplus \af$ and the associated representation $\rho : \af \to \so(\df)$ will be the primary tool for determining the remaining data $(E, F, A, u, v, w, \alpha, \lambda)$ step by step. In this sense, Theorem \ref{milnor} is the algebraic backbone of the entire classification.

		As a consequence of Proposition \ref{solvable}, we have the following proposition.
		
		\begin{pr}\label{CA} Let  \((\mathfrak{a},\mathfrak{d},\rho)\) as in Theorem \ref{milnor}, then there exist a basis \((e_1,f_1,\ldots,e_r,f_r)\) of \(\mathfrak{d}\) and nonzero vectors \(u_1,\ldots,u_r \in \mathfrak{a}\) such that, for all \(a \in \mathfrak{a}\),
			\[
			\rho(a)(e_i) = \langle u_i,a \rangle f_i, \quad
			\rho(a)(f_i) = -\langle u_i,a \rangle e_i,
			\quad i=1,\ldots,r.
			\]
			Moreover, the center of the Lie algebra $\G=\df\oplus\af$ is $Z(\G)=\{u_1,\ldots,u_r\}^\perp$.
			
		\end{pr}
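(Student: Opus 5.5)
The plan is to apply Proposition~\ref{solvable} to the image $\rho(\af)\subset\so(\df)$. Since $\af$ is abelian and $\rho$ is a representation, $\rho(\af)$ is an abelian, hence solvable, subalgebra of $\so(\df)$, where $\df$ is Euclidean for the restricted metric. Part 1 of Proposition~\ref{solvable} then gives an orthonormal basis $(e_1,f_1,\ldots,e_{r'},f_{r'},g_1,\ldots,g_s)$ of $\df$ and linear forms $\lambda_i$ on $\rho(\af)$ with $\rho(a)e_i=\lambda_i(\rho(a))f_i$, $\rho(a)f_i=-\lambda_i(\rho(a))e_i$ and $\rho(a)g_j=0$.

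Next I remove the $g_j$. Each $g_j$ lies in $\bigcap_{a\in\af}\ker\rho(a)$, and this intersection is $\{0\}$ by Theorem~\ref{milnor}. Hence $s=0$, so $\dim\df=2r'$ and therefore $r'=r$.

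Then I produce the vectors $u_i$. The map $a\mapsto\lambda_i(\rho(a))$ is a linear form on $\af$. Since $\af$ carries the restricted Euclidean metric, Riesz representation gives a unique $u_i\in\af$ with $\lambda_i(\rho(a))=\langle u_i,a\rangle$. To see $u_i\neq0$, suppose $u_i=0$. Then $\rho(a)e_i=0$ for all $a$, so $e_i$ lies in the common kernel, contradicting the same condition as before. This gives the displayed formulas.

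For the center, Theorem~\ref{milnor} says $Z(\G)=\ker\rho$, which is a subspace of $\af$. Now $\rho(a)=0$ holds iff $\langle u_i,a\rangle=0$ for all $i$, i.e.\ iff $a\in\af\cap\{u_1,\ldots,u_r\}^\perp$. One interprets $\{u_1,\ldots,u_r\}^\perp$ as the orthogonal complement inside $\af$; otherwise it would also contain $\df$, which is not central. As a sanity check, this agrees with computing brackets from \eqref{Levi-Milnor}: $[a,d]=\rho(a)d$, $\df$ and $\af$ are abelian, and an element $d+a$ is central iff $\rho(a)=0$ and $\rho(a')d=0$ for all $a'$, i.e.\ $d=0$.

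There is no real obstacle. The only delicate point is the interpretation of the orthogonal complement in the center statement, which must be taken within $\af$.
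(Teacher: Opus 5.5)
Your proof is correct and follows the paper's route: the paper gives no separate proof and states this as a direct consequence of Proposition~\ref{solvable}, and your argument supplies the details. You apply that proposition to $\rho(\af)$, remove the $g_j$ using $\bigcap_{a\in\af}\ker\rho(a)=\{0\}$, obtain the nonzero $u_i$ by Riesz representation, and identify $Z(\G)=\ker\rho$. Your remark that $\{u_1,\ldots,u_r\}^\perp$ must be taken inside $\af$ is also right, since $\df$ is orthogonal to every $u_i$ but is not central when $\df\neq 0$.
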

		
		\subsection{Three key lemmas}\label{subsection25}

		In this subsection, we prove three key lemmas that will play a crucial role in the sequel.
		
		\begin{Le}\label{LB}
			Let $E$ be a vector space, $(V,\langle \cdot,\cdot \rangle)$  a Euclidean vector space, $\Lambda : E \to \so(V)$ and $B : E \to V$  two linear maps such that, for all $a,b \in E$,
			\[
			\bigcap_{a \in E} \ker \Lambda_a = \{0\}, \quad [\Lambda_a,\Lambda_b]=0 \quad \text{and} \quad \Lambda_a \circ B(b) = \Lambda_b \circ B(a).
			\]
			Then there exists a vector $v \in V$ such that, for all $a \in E$, $B(a) = \Lambda_a(v)$.
		\end{Le}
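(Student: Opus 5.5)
The plan is to diagonalize simultaneously and solve the problem plane by plane. Since $\{\Lambda_a : a\in E\}$ is an abelian subalgebra of $\so(V)$, Proposition \ref{solvable} applies. It gives an orthonormal basis $(e_1,f_1,\ldots,e_r,f_r,g_1,\ldots,g_s)$ of $V$ and linear forms $\lambda_1,\ldots,\lambda_r$ on $E$ with
\[
\Lambda_a e_i=\lambda_i(a)f_i,\qquad \Lambda_a f_i=-\lambda_i(a)e_i,\qquad \Lambda_a g_j=0 .
\]
The hypothesis $\bigcap_a\ker\Lambda_a=\{0\}$ forces $s=0$, and each $\lambda_i$ is a nonzero form. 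Thus $V=\bigoplus_i P_i$ with $P_i=\mathrm{span}\{e_i,f_i\}$. It is convenient to identify each $P_i$ with $\mathbb{C}$, sending $e_i\mapsto 1$ and $f_i\mapsto \sqrt{-1}$. Under this identification $\Lambda_a$ acts on $P_i$ as multiplication by $\sqrt{-1}\,\lambda_i(a)$.

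Next decompose $B(a)=\sum_i B_i(a)$ with $B_i(a)\in P_i$, each $B_i:E\to P_i\simeq\mathbb{C}$ being $\mathbb{R}$-linear. Since every $\Lambda_a$ preserves every $P_i$, the compatibility condition $\Lambda_a B(b)=\Lambda_b B(a)$ splits into the componentwise identities $\lambda_i(a)B_i(b)=\lambda_i(b)B_i(a)$ in $\mathbb{C}$, for all $a,b\in E$ and all $i$.

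This is the key step. Fix $i$ and choose $a_0$ with $\lambda_i(a_0)\neq0$. Taking $a=a_0$ gives $B_i(b)=\lambda_i(b)\,c_i$ for all $b$, where $c_i=B_i(a_0)/\lambda_i(a_0)\in\mathbb{C}$. Now set $v_i=c_i/\sqrt{-1}\in P_i$. Then
\[
\Lambda_b v_i=\sqrt{-1}\,\lambda_i(b)\,v_i=\lambda_i(b)\,c_i=B_i(b).
\]

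Finally put $v=\sum_i v_i$. Then $\Lambda_b v=\sum_i B_i(b)=B(b)$ for all $b\in E$, as required.

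The only point needing care is the reduction to the planes $P_i$. It uses two things: that the $P_i$ are invariant under all the $\Lambda_a$, and that no common kernel $g_j$ survives. A surviving $g_j$ would obstruct the argument, since components of $B$ along it could not be reached by any $\Lambda_a v$. Both facts come directly from Proposition \ref{solvable} and the hypothesis. If several indices share proportional forms $\lambda_i$, nothing changes, because each plane is treated separately.
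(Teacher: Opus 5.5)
Your proof is correct, but it is organized differently from the paper's. The paper argues by induction on $\dim V$. It picks $a_0$ with $\Lambda_{a_0}\neq 0$ and splits $V=\ker\Lambda_{a_0}\oplus(\ker\Lambda_{a_0})^\perp$; both summands are invariant under every $\Lambda_a$ by commutativity and skew-symmetry. It solves on the complement by inverting $\Lambda_{a_0}$ there, and applies the induction hypothesis to the restrictions to $\ker\Lambda_{a_0}$, which still commute and have trivial common kernel. You instead use the full simultaneous normal form of Proposition \ref{solvable} from the start. The compatibility condition then collapses to the scalar proportionality $\lambda_i(a)B_i(b)=\lambda_i(b)B_i(a)$ on each plane, and no induction is needed. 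In fact your $v_i$ is exactly $(\Lambda_{a_i})_{|P_i}^{-1}B_i(a_i)$ for any $a_i$ with $\lambda_i(a_i)\neq0$, i.e.\ the paper's inversion step carried out plane by plane. Your route gives an explicit formula for $v$ and makes transparent where the hypothesis $\bigcap_a\ker\Lambda_a=\{0\}$ enters ($s=0$ and every $\lambda_i\neq0$). The paper's route uses less structure: only that a skew-symmetric operator is invertible on the orthogonal complement of its kernel, and that commuting skew-symmetric operators preserve each other's kernels. So it is coordinate-free and does not really depend on the normal form, which it invokes only to note that $\dim V$ is even.
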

		
		\begin{proof}
			Note that, by virtue of Proposition \ref{solvable}, the conditions $\bigcap_{a \in E} \ker \Lambda_a = \{0\}$ and $[\Lambda_a,\Lambda_b]=0$ imply that $\dim V = 2n$. We prove the result by induction on $n$.
			
			If $\dim V = 2$, any nonzero skew-symmetric endomorphism is invertible. Hence, there exists $a \in E$ such that $\Lambda_a$ is invertible. For any $b \in E$, we obtain
			\[
			B(b) = \Lambda_b \circ \Lambda_a^{-1} \bigl(B(a)\bigr),
			\]
			which gives the desired result.
			
			Assume now that the result holds for $n-1$, and let $\dim V = 2n$. Choose $a_0 \in E$ such that $\Lambda_{a_0} \neq 0$. If $\Lambda_{a_0}$ is invertible, the conclusion follows as above. Otherwise, set $V_0 = \ker \Lambda_{a_0}$. Then $V = V_0 \oplus V_0^\perp$.
				For any $a \in E$, write
			\[
			B(a) = B_1(a) + B_2(a),
			\]
			where $B_1(a) \in V_0$ and $B_2(a) \in V_0^\perp$. Since $[\Lambda_a,\Lambda_{a_0}]=0$, the endomorphism $\Lambda_a$ preserves both $V_0$ and $V_0^\perp$. The relation $\Lambda_a \circ B(b) = \Lambda_b \circ B(a)$ is then equivalent to
			\[
			\Lambda_b\bigl(B_1(a)\bigr) = \Lambda_a\bigl(B_1(b)\bigr) \quad \text{and} \quad \Lambda_a \circ B_2(b) = \Lambda_b \circ B_2(a).
			\]
			Since the restriction of $\Lambda_{a_0}$ to $V_0^\perp$ is invertible, the vector  $v=\Lambda^{-1}_{a_0}\circ B_2(a_0) \in V_0^\perp$ verifies $B_2(b) = \Lambda_b(v)$ for all $b \in E$.

			Finally, consider $B_1:E\too V_0$ and $\overline{\Lambda}:E\too\so(V_0)$ where $\overline{\Lambda}_a$ is the restrictions of $\Lambda_b$ to $V_0$. We have, for any $a,b\in E$,
			\[ [\overline{\Lambda}_a,\overline{\Lambda}_b]=0 \esp \bigcap_{a \in E} \ker \overline{\Lambda}_a = \{0\}. \]
			By the induction hypothesis applied to $(B_1,\overline{\Lambda})$, there exists $v_0 \in V_0$ such that $B_1(b) = \overline{\Lambda}_b(v_0)$ for all $b \in E$. Therefore,
			\[
			B(b) = \Lambda_b(v_0 + v),
			\]
			which concludes the proof.
		\end{proof}

		The process of double extension of a flat pseudo-Euclidean Lie algebra \((\h,[\cdot,\cdot],\langle \cdot,\cdot \rangle)\), described in \cite{Aub-Med} and recalled in Section \ref{section3}, involves two endomorphisms \(D,A \in \mathrm{End}(\h)\) and an element \(w \in \h\) satisfying 
		\[
		D - A \in \mathfrak{so}(\h),\;  [D-A,A] = A^2 - \la A + \mathrm{R}_w\esp A([a,b])=a\cdot A(b)-b\cdot A(a)
		\]for any $a,b\in\h$ and where $\cdot$ is the Levi-Civita product (see \eqref{doubleM}).

		In the sequel, we will encounter theses relations. The following two lemmas give their solutions in the Euclidean case when \(w=0\).

		\begin{Le}\label{LF}
			Let $(V,\langle \cdot,\cdot \rangle)$ be a Euclidean vector space, $F \in \mathfrak{so}(V)$, 
			and $A \in \mathrm{End}(V)$ satisfying
			\begin{equation}\label{eq:FA}
				[F,A] = A^2 - \lambda A, \qquad \lambda \neq 0.
			\end{equation}
			Then $A$ is diagonalizable over $\mathbb{R}$ with spectrum $\{0, \lambda\}$, and the following 
			hold:
			\begin{equation}\label{eq:A2}
				A^2 = \lambda A \qquad \text{and} \qquad [F,A] = 0.
			\end{equation}
			Moreover, with respect to the orthogonal decomposition $V = \ker A \oplus (\ker A)^\perp$, 
			the operators $F$ and $A$ take the block forms
			\begin{equation}\label{eq:blocks}
				F =
				\begin{pmatrix}
					F_1 & 0 \\
					0   & F_2
				\end{pmatrix},
				\qquad
				A =
				\begin{pmatrix}
					0 & U \\
					0 & \lambda\,\mathrm{Id}_{(\ker A)^\perp}
				\end{pmatrix},
			\end{equation}
			where $F_1 \in \mathfrak{so}(\ker A)$, $F_2 \in \mathfrak{so}((\ker A)^\perp)$, 
			$U : (\ker A)^\perp \to \ker A$ is a linear map, and $F_1 \circ U = U \circ F_2$.
		\end{Le}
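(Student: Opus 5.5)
The plan is to show first that $A$ satisfies $A^2=\lambda A$, using a trace argument that exploits the skew-symmetry of $F$ in the Euclidean setting. Everything else then follows by elementary linear algebra.

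\textbf{Step 1: traces of powers.} Multiply \eqref{eq:FA} on the right by $A^{k}$ and take traces. Since $\mathrm{tr}([F,A]A^k)=\mathrm{tr}(FA^{k+1}-AFA^k)=0$, we get
\[
\mathrm{tr}(A^{k+2})=\lambda\,\mathrm{tr}(A^{k+1})\qquad (k\geq 0).
\]
More useful is the identity obtained by writing $B=A^2-\lambda A$, so that $B=[F,A]$. Then $[F,B]=[F,A^2]-\lambda[F,A]=AB+BA-\lambda B$.

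\textbf{Step 2: $B$ is nilpotent.} Work with eigenvalues. From $[F,A]=B$ and $[F,A^j]$, one shows inductively that $\mathrm{tr}(A^jB)=\mathrm{tr}(A^j[F,A])=\mathrm{tr}([A^j,F]A)\cdot(\text{sign})$. This vanishes because $\mathrm{tr}(A^j[F,A])=\mathrm{tr}(F[A,A^j])=0$. So $\mathrm{tr}(A^{j+2})=\lambda\,\mathrm{tr}(A^{j+1})$ for all $j\geq0$. Hence every eigenvalue $\mu$ of $A$ (over $\mathbb C$) satisfies $\mu\in\{0,\lambda\}$: the power sums of the nonzero eigenvalues coincide with those of a multiset of copies of $\lambda$, and Newton's identities give the conclusion.

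Thus $B=A^2-\lambda A$ is a polynomial in $A$ with spectrum $\mu^2-\lambda\mu=0$, so $B$ is nilpotent. Also $B=[F,A]$, and we need $B=0$. This is the main obstacle, because the trace argument alone only yields nilpotency.

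\textbf{Step 3: use skew-symmetry.} Let $V_0$ and $V_\lambda$ be the generalized eigenspaces of $A$, and let $P$ be the spectral projection onto $V_\lambda$, which is a polynomial in $A$. The derivation $\delta=[F,\cdot\,]$ satisfies $\delta(A)=A^2-\lambda A$. For the idempotent $P=p(A)$, compute $\delta(P)$ via the holomorphic functional calculus. Formally, $\delta$ acts on functions of $A$ like the vector field $(\mu^2-\lambda\mu)\partial_\mu$ plus nilpotent corrections, and $P$ is locally constant near the spectrum. I expect this to give $\delta(P)$ in the ideal generated by $B$.

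A cleaner route uses $e^{tF}$, which is orthogonal. We have $\frac{d}{dt}\,e^{-tF}Ae^{tF}=e^{-tF}(-[F,A])e^{tF}$, so $A(t)=e^{-tF}Ae^{tF}$ solves the Riccati-type ODE $A'=-(A^2-\lambda A)$. Its solution is explicit:
\[
A(t)=\lambda A\bigl(A-(A-\lambda)e^{\lambda t}\bigr)^{-1}.
\]
This follows by the Möbius substitution, since all the matrices involved commute with $A$. Now $A(t)$ is conjugate to $A$ by an orthogonal matrix, so it stays bounded for all real $t$. Choose the sign of $t$ so that $e^{\lambda t}\to\infty$. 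In the Jordan decomposition, a nonzero nilpotent part of $B$ produces polynomial-times-exponential terms. I would check that boundedness of $A(t)$ in both directions, together with the fact that $\|A(t)\|=\|A\|$ is constant in the Frobenius norm, forces $A(t)$ to converge to $A$'s semisimple-type limits. Equality of norms then forces $B=0$.

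Alternatively, compare $\mathrm{tr}(A(t)^*A(t))=\mathrm{tr}(A^*A)$, which is constant, with the explicit formula, and derive $B=0$ from the $t$-expansion.

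\textbf{Step 4: the block form.} Once $A^2=\lambda A$, $A$ is diagonalizable with spectrum in $\{0,\lambda\}$. Then $[F,A]=A^2-\lambda A=0$. So $F$ preserves $\ker A$, and by skew-symmetry it also preserves $(\ker A)^\perp$, giving the block form of $F$.

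Since $\mathrm{Im}\,A=\ker(A-\lambda)$ is complementary to $\ker A$, write $A$ in the decomposition $V=\ker A\oplus(\ker A)^\perp$. The first column vanishes, and $A^2=\lambda A$ forces the $(2,2)$ block $C$ to satisfy $C^2=\lambda C$ with $C$ injective. Hence $C=\lambda\,\mathrm{Id}$, and the upper block is $U$. Finally, $[F,A]=0$ read off blockwise gives $F_1U=UF_2$.
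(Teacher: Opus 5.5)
Your Steps 1--2 are correct. The identity $\mathrm{tr}(A^{k+2})=\lambda\,\mathrm{tr}(A^{k+1})$, combined with a Newton/Vandermonde argument, does put the spectrum of $A$ in $\{0,\lambda\}$, so $B=A^2-\lambda A$ is nilpotent. Your Step 4 is the paper's block-form argument.

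\textbf{The gap is Step 3.} That step carries the whole content of the lemma, and it is not a proof. Steps 1--2 never use that $F$ is skew, and without that hypothesis the conclusion fails: on $\mathbb{R}^2$, $F=\mathrm{diag}(\lambda/2,-\lambda/2)$ and $A=\bigl(\begin{smallmatrix}\lambda&1\\0&\lambda\end{smallmatrix}\bigr)$ satisfy $[F,A]=A^2-\lambda A$, yet $A$ is not diagonalizable. So everything hinges on proving $B=0$, and there you only offer expectations.
\begin{itemize}
\item The functional-calculus sketch never says where skew-symmetry enters. In fact $[F,P]=0$ holds exactly; that gives $F$-invariance of the generalized eigenspaces, but it still has to be combined with skew-symmetry.
\item The flow $A(t)=e^{-tF}Ae^{tF}$ is a viable idea, but your explicit formula solves $A'=A^2-\lambda A$, not $A'=\lambda A-A^2$. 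The correct solution is $A(t)=\lambda e^{\lambda t}A\bigl(\lambda\,\mathrm{Id}+(e^{\lambda t}-1)A\bigr)^{-1}$, i.e.\ yours with $t\mapsto -t$, so this slip is harmless.
\item More seriously, the decisive estimate is replaced by ``I would check''. What is needed is not a convergence or equality-of-norms argument but an explicit unboundedness statement.
\end{itemize}

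\textbf{Closing the flow argument.} $A(t)$ is a rational function of $A$, so it preserves $V_0=\ker A^n$ and $V_\lambda=\ker(A-\lambda)^n$, where $n=\dim V$. Write $A|_{V_0}=N$ and $A|_{V_\lambda}=\lambda\,\mathrm{Id}+M$ with $N,M$ nilpotent. Expanding the inverse gives $A(t)|_{V_0}=e^{\lambda t}N+\sum_{k\geq 2}c_k(t)N^k$ and $A(t)|_{V_\lambda}=\lambda\,\mathrm{Id}+e^{-\lambda t}M+\sum_{k\geq 2}d_k(t)M^k$. The nonzero powers of a nilpotent operator are linearly independent. Hence these restrictions are unbounded as $\lambda t\to+\infty$ (resp.\ $\lambda t\to-\infty$) unless $N=0$ (resp.\ $M=0$). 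This contradicts $\|A(t)\|=\|A\|$, so $A^2=\lambda A$.

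\textbf{The paper's route} is algebraic and shorter.
\begin{itemize}
\item Since $[F,A]$ commutes with $A$, one has $[F,A^k]=kA^k(A-\lambda)$ and $[F,(A-\lambda)^k]=k(A-\lambda)^kA$.
\item Hence $F$ preserves $V_0$ and $V_\lambda$, and restricts to skew-symmetric maps on them.
\item If $M^{h-1}\neq 0=M^h$ with $h\geq 2$, then $[F|_{V_\lambda},M^{h-1}]=(h-1)\lambda M^{h-1}$. This is a nonzero real eigenvalue of $\mathrm{ad}_{F|_{V_\lambda}}$, impossible since $\mathrm{ad}$ of a skew-symmetric map has purely imaginary spectrum on $\mathrm{End}$.
\item Likewise, if $N\neq 0$ with nilpotency index $r\geq 2$, then $[F|_{V_0},N^{r-1}]=-(r-1)\lambda N^{r-1}$, which forces $N=0$.
\end{itemize}
The paper gets the spectrum from a minimal-polynomial computation rather than your trace identity; either works for that part.
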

		
		\begin{proof}
			The proof proceeds in two steps.
			
			\medskip
			\noindent\textbf{Step 1: Proof of \eqref{eq:A2}.}
			
			Set $B := A - \lambda\,\mathrm{Id}_V$. A direct computation gives
			\[
			[F, B] = (B + \lambda\,\mathrm{Id}_V)^2 - \lambda(B + \lambda\,\mathrm{Id}_V) 
			= B^2 + \lambda B.
			\]
			Using the Leibniz rule $[F, XY] = X[F,Y] + [F,X]Y$, we establish by induction 
			that for every $n \in \mathbb{N}^*$,
			\begin{equation}\label{eq:induction}
				[F, A^n] = nA^n(A - \lambda\,\mathrm{Id}_V), 
				\qquad 
				[F, B^n] = nB^n(B + \lambda\,\mathrm{Id}_V).
			\end{equation}
			The case $n = 1$ holds by assumption. Assuming the formula for $n-1$, we compute
			\[
			[F, A^n] = [F, A]A^{n-1} + A[F, A^{n-1}]
			= A^n(A - \lambda\,\mathrm{Id}_V) + (n-1)A^n(A - \lambda\,\mathrm{Id}_V)
			= nA^n(A - \lambda\,\mathrm{Id}_V),
			\]
			and the second formula follows similarly. As a consequence, for any polynomial 
			$P(X) = \sum_{k=0}^n a_k X^k$, one has
			\begin{equation}\label{eq:poly}
				[F, P(A)] = P_{\mathrm{ad}}(A)(A - \lambda\,\mathrm{Id}_V), 
				\qquad \text{where} \quad 
				P_{\mathrm{ad}}(X) := \sum_{k=0}^n k\,a_k X^k.
			\end{equation}
			
			Let $P(X)$ be the minimal polynomial of $A$, written as 
			$P(X) = (X - \lambda)^m Q(X)$ with $m \geq 0$ and $Q(\lambda) \neq 0$. 
			Applying \eqref{eq:induction}--\eqref{eq:poly} and using $P(A) = 0$, we obtain
			\begin{align*}
				0 &= [F, P(A)] \\
				&= [F,(A - \lambda\,\mathrm{Id}_V)^m]Q(A) 
				+ (A - \lambda\,\mathrm{Id}_V)^m [F, Q(A)] \\
				&= m\,A(A - \lambda\,\mathrm{Id}_V)^m Q(A) 
				+ (A - \lambda\,\mathrm{Id}_V)^{m+1} Q_{\mathrm{ad}}(A) \\
				&= (A - \lambda\,\mathrm{Id}_V)^{m+1} Q_{\mathrm{ad}}(A).
			\end{align*}
			Thus $(X-\lambda)^{m+1} Q_{\mathrm{ad}}(X)$ is a polynomial of degree $\deg P + 1$ 
			that annihilates $A$. Since $P$ is the minimal polynomial of $A$, there exist 
			constants $\alpha, \mu \in \mathbb{R}$ such that
			\[
			(X - \lambda)^{m+1} Q_{\mathrm{ad}}(X) = \alpha(X - \mu)(X - \lambda)^m Q(X),
			\]
			which simplifies to $(X - \lambda)Q_{\mathrm{ad}}(X) = \alpha(X - \mu)Q(X)$.
			Since $Q(\lambda) \neq 0$, the polynomial $Q(X)$ is coprime to $(X - \lambda)$, 
			so $\mu = \lambda$ and $Q_{\mathrm{ad}}(X) = \alpha Q(X)$.
			
			Writing $Q(X) = \sum_{k=0}^r a_k X^k$, the identity $Q_{\mathrm{ad}}(X) = \alpha Q(X)$ 
			gives $k\,a_k = \alpha\,a_k$ for all $k = 0, \ldots, r$. Hence $\alpha = r^{-1}$ and 
			$a_k = 0$ for $0 \leq k \leq r-1$, so that $Q(X) = a_r X^r$ and 
			$P(X) = a_r(X - \lambda)^m X^r$. Consequently,
			\[
			V = \ker B^m \oplus \ker A^r.
			\]
			Since $B$ is nilpotent on $\ker B^m$, equation \eqref{eq:induction} shows that 
			$F$ preserves both $\ker B^m$ and $\ker A^r$. If $B^{h-1} \neq 0$ on $\ker B^m$ 
			for some $h \geq 2$, then $[F, B^{h-1}] = -\lambda h\,B^{h-1}$, which would give 
			a real nonzero eigenvalue $-\lambda h$ for $\mathrm{ad}_F$ acting on 
			$\mathrm{End}(V)$. This contradicts the fact that $F \in \mathfrak{so}(V)$ 
			implies $\mathrm{ad}_F$ has only purely imaginary eigenvalues 
			(see Subsection~\ref{subsection21}). Therefore $B^{h-1} = 0$, so $B$ is zero 
			on $\ker B^m$, i.e.\ $A = \lambda\,\mathrm{Id}$ on $\ker B^m$, while $A = 0$ 
			on $\ker A^r$. This gives $A^2 = \lambda A$ on all of $V$, and hence $[F,A] = 0$ 
			by \eqref{eq:FA}.
			
			\medskip
			\noindent\textbf{Step 2: Block structure of $F$ and $A$.}
			
			Since $A^2 = \lambda A$, the space $V$ decomposes as 
			$V = \ker A \oplus \ker(A - \lambda\,\mathrm{Id}_V)$, but this splitting need not 
			be orthogonal. We work instead with the orthogonal decomposition 
			$V = \ker A \oplus (\ker A)^\perp$.
			
			Since $[F, A] = 0$, the operator $F$ commutes with $A$ and therefore preserves 
			$\ker A$. Since $F$ is skew-symmetric, it also preserves $(\ker A)^\perp$. 
			Setting $F_1 := F|_{\ker A}$ and $F_2 := F|_{(\ker A)^\perp}$, we obtain 
			$F_1 \in \mathfrak{so}(\ker A)$ and $F_2 \in \mathfrak{so}((\ker A)^\perp)$, 
			giving the first block form in \eqref{eq:blocks}.
			
			For the block form of $A$, note that $A$ vanishes on $\ker A$ by definition. 
			For any $x \in (\ker A)^\perp$, write $A(x) = U(x) + W(x)$ with 
			$U(x) \in \ker A$ and $W(x) \in (\ker A)^\perp$. Then
			\[
			A^2(x) = A(W(x)) = U(W(x)) + W^2(x) = \lambda U(x) + \lambda W(x),
			\]
			where the last equality uses $A^2 = \lambda A$. Comparing the components, 
			we get $W^2 = \lambda\,W$ on $(\ker A)^\perp$. Since $W$ maps $(\ker A)^\perp$ 
			to itself and has no kernel there (as $\ker A \cap (\ker A)^\perp = \{0\}$), 
			$W$ is bijective, and $W^2 = \lambda W$ forces $W = \lambda\,\mathrm{Id}_{(\ker A)^\perp}$. 
			This gives the second block form in \eqref{eq:blocks}.
			
			Finally, the relation $[F, A] = 0$ in terms of the block decomposition is 
			equivalent to $F_1 \circ U = U \circ F_2$, which completes the proof.
		\end{proof}
		
		\begin{Le}\label{AM} Let $(\G,\br,\prs)$ be a flat Euclidean Lie algebra, $\cdot$ its Levi-Civita product and $A$ an endomorphism of $\G$ such that, for all $a,b\in\G$,
			\begin{equation}\label{A} A([a,b])=a\cdot Ab-b\cdot Aa. \end{equation}
			Then  in the splitting $\G=\df\oplus\af$ described in Theorem \ref{milnor},
			\[ A=\begin{pmatrix}
				A_1&\Ru_h\\0&A_2
			\end{pmatrix} \]where $A_1:\df\too\df$,   $A_2:\af\too\af$, $h\in\df$,  $\Ru_h:\af\too\df$ is the right multiplication operator given by $\Ru_h(a)=\rho(a)(h)$ and $[A_1,\rho(a)]=0$ for any $a\in\af$.
			
		\end{Le}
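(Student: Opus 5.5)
We will prove Lemma \ref{AM} by testing relation \eqref{A} on the three types of pairs allowed by the splitting $\G=\df\oplus\af$ of Theorem \ref{milnor}, using the explicit Levi-Civita product \eqref{Levi-Milnor}. Recall that $u\cdot v=0$ whenever $u\in\df$, and that $[a,x]=\rho(a)x$ for $a\in\af$, $x\in\df$, while $\df$ and $\af$ are abelian. We write
\[
A=\begin{pmatrix}A_1&A_{12}\\A_{21}&A_2\end{pmatrix},
\]
with $A_{12}:\af\to\df$ and $A_{21}:\df\to\af$.

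\textbf{Step 1 (pairs in $\df$).} For $x,y\in\df$ we have $[x,y]=0$ and $x\cdot Ay=y\cdot Ax=0$, so \eqref{A} gives no information.

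\textbf{Step 2 (mixed pairs: $A_{21}=0$ and commutation).} Take $a\in\af$ and $x\in\df$. Then $[a,x]=\rho(a)x$, and $x\cdot Aa=0$. Moreover
\[
a\cdot Ax=\rho(a)(A_1x)+a\cdot A_{21}x=\rho(a)A_1x,
\]
since the product of two elements of $\af$ vanishes. Thus \eqref{A} reads
\[
A_1\rho(a)x+A_{21}\rho(a)x=\rho(a)A_1x.
\]
Comparing the $\af$-components gives $A_{21}\circ\rho(a)=0$ for all $a$. Since $\sum_a\mathrm{Im}\,\rho(a)=\df$ (Remark after Theorem \ref{milnor}), this forces $A_{21}=0$. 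Comparing the $\df$-components then gives $[A_1,\rho(a)]=0$.

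\textbf{Step 3 (pairs in $\af$: the off-diagonal block).} For $a,b\in\af$ we have $[a,b]=0$, and
\[
a\cdot Ab=\rho(a)A_{12}(b).
\]
Hence \eqref{A} becomes $\rho(a)A_{12}(b)=\rho(b)A_{12}(a)$. This is exactly the hypothesis of Lemma \ref{LB}, applied with $E=\af$, $V=\df$, $\Lambda=\rho$ and $B=A_{12}$: the $\rho(a)$ commute, and $\bigcap_a\ker\rho(a)=\{0\}$ by Theorem \ref{milnor}. Lemma \ref{LB} yields $h\in\df$ with
\[
A_{12}(a)=\rho(a)h=a\cdot h=\Ru_h(a).
\]

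The only step of any substance is Step 3, and it is fully handled by Lemma \ref{LB}. The rest is bookkeeping of which products vanish. The block $A_2$ is left unconstrained, as the statement allows.
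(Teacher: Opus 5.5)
Your proof is correct and follows the paper's argument. Like the paper, you use the block decomposition in $\G=\df\oplus\af$, get $[A_1,\rho(a)]=0$ from mixed pairs $a\in\af$, $x\in\df$, and reduce pairs in $\af$ to Lemma~\ref{LB} for the off-diagonal block. Your derivation of $A(\df)\subset\df$, from the $\af$-component of the mixed relation together with $\sum_a\im\rho(a)=\df$, is in fact a cleaner justification than the paper's terse remark: the real reason is $A([\G,\G])\subset\G\cdot\G=\df$, not the relation on pairs in $\df$, which is vacuous as you note in Step 1.
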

		\begin{proof} By virtue of Theorem \ref{milnor},  $\G=\df\oplus\af$ and there exists a representation of an abelian Lie algebra $\rho:\af\to\so(\df)$ such that the Levi-Civita product is given by \eqref{Levi-Milnor}.
			 The relation \eqref{A} holds for $a,b\in\df$ and implies that $A(\df)\subset \df$. So, in the orthogonal splitting $\G=\df\oplus\af$, we can write $$A=\begin{pmatrix}
				A_1&B\\0&A_2
			\end{pmatrix}.$$ For $a,b\in\af$, the relation \eqref{A} is equivalent to
			$\rho(a) B(b)=\rho(b) B(a)$ and we can apply Lemma \ref{LB}. For $a\in\af$ and $b\in\df$, the relation \eqref{A} is equivalent to
			$[\rho(a), A_1]=0$. 
		\end{proof}

	\subsection{Proof of the dichotomy in Theorem \ref{main}}	\label{subsection26}

		The following proposition is a key ingredient and give a proof of the first assertion of Theorem~\ref{main}.
		
		\begin{pr}\label{ee}
			Let \((\G,\br,\prs)\) be a flat Lorentzian Lie algebra and $\cdot$ its Levi-Civita product. Then there exists a nonzero vector \(e \in \mathfrak{g}\) such that one of the following holds:
			\begin{enumerate}
				\item \(\langle e,e \rangle = -1\) and \(e\) is parallel, i.e., \(u\cdot e = 0\) for all \(u \in \mathfrak{g}\);
				\item \(\langle e,e \rangle = 0\) and there exists \(\lambda \in \mathfrak{g}^*\) such that, for all \(u \in \mathfrak{g}\),
				\[
				u\cdot e = \lambda(u)\, e.
				\]
			\end{enumerate}
		\end{pr}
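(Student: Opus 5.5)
The plan is to reduce the statement to a fact about solvable Lie algebras of skew-symmetric endomorphisms of a Lorentzian space. That fact is then proved by induction on the dimension, using Lie's theorem over $\mathbb{C}$.

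\textbf{Reduction.} Flatness means $\mathrm{L}_{[u,v]}=[\mathrm{L}_u,\mathrm{L}_v]$ for all $u,v$. Hence $\mathrm{L}:\G\to\so(\G)$ is a Lie algebra homomorphism, and $\mathfrak{k}:=\mathrm{L}(\G)$ is a subalgebra of $\so(\G)$. Since every flat Lorentzian Lie algebra is solvable \cite{della}, $\mathfrak{k}$ is solvable. It therefore suffices to prove the following claim.

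\emph{Claim.} If $(V,\prs)$ is Lorentzian and $\mathfrak{k}\subset\so(V)$ is solvable, then there is either a timelike $e$ with $K(e)=0$ for all $K\in\mathfrak{k}$, or a null $e\neq0$ with $K(e)=\lambda(K)e$ for all $K\in\mathfrak{k}$.

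Applying the claim with $K=\mathrm{L}_u$ gives the proposition. In the timelike case we normalize so that $\langle e,e\rangle=-1$, and in the null case we set $\lambda(u):=\lambda(\mathrm{L}_u)$, which is linear in $u$.

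\textbf{Induction on $\dim V$.} If $\dim V=1$, then $\so(V)=0$ and any timelike vector works. In general, apply Lie's theorem to $\mathfrak{k}^{\mathbb{C}}$ acting on $V^{\mathbb{C}}$. This gives $z=x+iy\neq0$ and a complex character $\mu$ with $Kz=\mu(K)z$. Extend $\prs$ complex-bilinearly and use skew-symmetry.

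\textbf{Case (a): $\mu$ is real-valued.} Then $x$ or $y$ is a nonzero real common eigenvector $e$. From $\mu\langle e,e\rangle=\langle Ke,e\rangle=0$, either $e$ is null, or $\mu=0$ and $e$ is fixed.
\begin{itemize}
\item If $e$ is null, we are in the second alternative.
\item If $e$ is timelike and fixed, we are in the first alternative.
\item If $e$ is spacelike and fixed, then $W=e^\perp$ is Lorentzian, and both $W$ and $W^\perp=\mathbb{R}e$ are $\mathfrak{k}$-invariant. Apply the induction hypothesis to the restriction of $\mathfrak{k}$ to $W$, which is still solvable. The vector it produces in $W$ satisfies the required relation on all of $V$, because each $K$ preserves $W$.
\end{itemize}

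\textbf{Case (b): $\mu(K_0)\notin\mathbb{R}$ for some $K_0$.} Then $\bar z$ is a common eigenvector for $\bar\mu\ne\mu$, so $x,y$ are independent and $P=\mathrm{span}\{x,y\}$ is $\mathfrak{k}$-invariant. Skew-symmetry gives $2\mu(K_0)\langle z,z\rangle=0$, so $\langle z,z\rangle=0$, i.e. $\langle x,x\rangle=\langle y,y\rangle$ and $\langle x,y\rangle=0$.
\begin{itemize}
\item If $\langle x,x\rangle=0$, then $x,y$ are orthogonal null vectors in a Lorentzian space, hence proportional. This is a contradiction, so this subcase does not occur.
\item Otherwise $P$ is nondegenerate with $\langle x,x\rangle=\langle y,y\rangle\neq0$. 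A Lorentzian space has no $2$-dimensional negative definite subspace, so $P$ is Euclidean. Then $P^\perp$ is Lorentzian and $\mathfrak{k}$-invariant, and we conclude by induction exactly as in the spacelike subcase of (a).
\end{itemize}

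The main point to check carefully is the solvability of $\mathrm{L}(\G)$ (which rests on \cite{della}) and the handling of Case (b), where signature considerations force the invariant $2$-plane to be Euclidean. Everything else is routine bookkeeping.
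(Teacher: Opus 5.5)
Your proof is correct, but it is organized differently from the paper's. Both arguments use the same two facts: solvability of $\G$, so that $\mathrm{L}(\G)\subset\so(\G)$ is solvable, and Lie's theorem.

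The paper first asserts that $\G$ splits orthogonally into irreducible $\G$-modules. It picks a Lorentzian summand $\G_0$ and then does one case analysis on an invariant subspace $I\subset\G_0$ of dimension $1$ or $2$, according to its dimension and degeneracy. Boosts in $\so(1,1)$ preserve the null lines of a Lorentzian plane, and skew-symmetry kills the $\bar e$-component of a degenerate plane.

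You instead induct on $\dim V$ and only ever split off \emph{nondegenerate} invariant pieces: a fixed spacelike line, or the Euclidean plane attached to a non-real character. Their orthogonal complements are automatically invariant and Lorentzian. This gives you rigor exactly where the paper is loose. A skew-symmetric representation on a Lorentzian space need not be completely reducible: a null rotation in $\so(1,2)$ fixes a null line that has no invariant complement. So the paper's decomposition into irreducibles needs the extra remark that any degenerate invariant subspace $W$ already yields the invariant null line $W\cap W^\perp$. Your induction never runs into this issue.

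Once patched, the paper's argument is shorter and avoids induction; yours is fully self-contained.
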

		
		\begin{proof}
			It is known from \cite{della} that \(\G\) is solvable. The vanishing of the curvature is equivalent to the fact that
			\[
			\mathrm{L} : \mathfrak{g} \to \mathfrak{so}(\mathfrak{g}),\quad u\mapsto[v\mapsto u\cdot v]
			\]
			is a representation of the Lie algebra \((\mathfrak{g},[\cdot,\cdot])\). Hence, \(\mathfrak{g}\) decomposes orthogonally as
			\[
			\mathfrak{g} = \mathfrak{g}_0 \oplus \mathfrak{g}_1 \oplus \cdots \oplus \mathfrak{g}_r,
			\]
			where each \(\mathfrak{g}_i\) is an irreducible \(\mathfrak{g}\)-module. One of these components, say \(\mathfrak{g}_0\), is Lorentzian.
			
			The restriction of \(\mathrm{L}\) to \(\mathfrak{g}_0\) gives rise to a representation of $\G$ on $\G_0$. Since \(\mathfrak{g}\) is solvable, Lie’s theorem implies the existence of a nontrivial \(\mathfrak{g}\)-invariant subspace \(I \subset \mathfrak{g}_0\) of dimension \(1\) or \(2\). We distinguish the following cases:
			
			\begin{itemize}
				\item  $\dim I=1$ and $I$ is nondegenerate. Then $\G_0=I=\R e$,  $\langle e,e\rangle=-1$ and $e$ is parallel.
				\item  $\dim I=1$ and $I$ is degenerate. Then $I=\R e$,
				 \(\langle e,e \rangle = 0\) and there exists \(\lambda \in \mathfrak{g}^*\) such that \(\mathrm{L}_u e = \lambda(u)e\) for all \(u \in \mathfrak{g}\).
				
				\item  \(\dim I = 2\) and \(I\) is nondegenerate. Then \(I = \mathfrak{g}_0\) and there exists a basis \((e,\bar{e})\) of \(I\) such that
				\[
				\langle e,e \rangle = \langle \bar{e},\bar{e} \rangle = 0, \qquad \langle e,\bar{e} \rangle = 1.
				\]
				In this case, there exists \(\lambda \in \mathfrak{g}^*\) such that \(\mathrm{L}_u e = \lambda(u)e\) for all \(u \in \mathfrak{g}\).
				
				\item  \(\dim I = 2\) and \(I\) is degenerate. Then there exists a basis \((e,\bar{e})\) of \(I\) such that
				\[
				\langle e,e \rangle = \langle e,\bar{e} \rangle = 0, \qquad \langle \bar{e},\bar{e} \rangle = 1.
				\]
				For any \(u \in \mathfrak{g}\), we can write
				\[
				\mathrm{L}_u e = \lambda(u)e + \beta(u)\bar{e}, \qquad
				\mathrm{L}_u \bar{e} = \mu(u)e + \gamma(u)\bar{e}.
				\]
				Using the skew-symmetry of \(\mathrm{L}_u\), we obtain
				\[
				0 = \langle \mathrm{L}_u \bar{e}, e \rangle = - \langle \mathrm{L}_u e, \bar{e} \rangle = \beta(u),
				\]
				and hence \(\mathrm{L}_u e = \lambda(u)e\). This completes the proof. \qedhere
			\end{itemize}

		\end{proof}

\section{ A first step toward a complete description of  flat Lorentzian Lie  algebras } \label{section3}

In this section, we derive the structural consequences of the dichotomy established in Proposition \ref{ee}.

\begin{pr}\label{G1} Let $(\G,\br,\prs)$ be a flat Lorentzian Lie algebra and $\cdot$ its Levi-Civita product. Suppose that there exists  a vector $e\in\G$ such that $\langle e,e\rangle=-1$ and, for any $u\in\G$, $u\cdot e=0$. Then:\begin{enumerate}
	
\item[$(i)$]	 $\G$ has an orthogonal decomposition $$\G=\R e\oplus\df \oplus \af\oplus Z_0\oplus Z_1$$ where $\df \oplus \af\oplus Z_0\oplus Z_1$ is a Euclidean vector space and $\df,\af,Z_0,Z_1$ are pairwise orthogonal, 
\item[$(ii)$]  there exists an orthonormal basis $(e_i,f_i)_{i=1}^r$ of $\df$, an orthonormal basis $(z_i,\bar{z}_i)_{i=1}^s$ of $Z_0$ and a family of non zero vectors $(u_1,\ldots,u_r)$ which spans $\af$ such that the non vanishing Lie brackets are
	\[\begin{cases} [e,e_i]=\la_i f_i,\;[e,f_i]=-\la_i e_i,\; [e,z_j]=\mu_j \bar{z}_j,\; [e,\bar{z}_j]=-\mu_j z_j,\\
		[a,e_i]=\langle a,u_i\rangle f_i,\; [a,f_i]=-\langle a,u_i\rangle e_i,\quad i=1,\ldots,r,\; j=1,\ldots,s,\; a\in\af.
	\end{cases} \]
	\end{enumerate}
	 Moreover, $(\G,.)$ is a Novikov algebra.
	\end{pr}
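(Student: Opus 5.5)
Set $\h=e^\perp$, a Euclidean subspace with $\G=\R e\oplus\h$. The plan is to show that $\h$ is a flat Euclidean Lie algebra (an ideal on which the whole Levi-Civita product lives), apply Theorem~\ref{milnor} to it, and then pin down how $\Lu_e$ interacts with the resulting Milnor data.

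\textbf{Step 1: structure of the product.} Since $u\cdot e=0$ for all $u$ and each $\Lu_u$ is skew-symmetric, $\langle u\cdot v,e\rangle=-\langle u\cdot e,v\rangle=0$. Hence $\G\cdot\G\subset\h$, and every $\Lu_u$ preserves $\h$. It follows that $[\G,\G]\subset\h$, so $\h$ is an ideal. Moreover $[e,v]=e\cdot v$ for $v\in\h$, and $\Lu_e=:D$ restricts to a skew-symmetric endomorphism of $\h$. On $\h$, the restricted product is the Levi-Civita product of the induced Euclidean metric, because the Koszul formula \eqref{Levi-Civita} only involves brackets that stay in $\h$. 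Flatness of $\G$ then gives flatness of $\h$. By Theorem~\ref{milnor} and Proposition~\ref{CA}, we get $\h=\df'\oplus\af'$ with $\rho:\af'\to\so(\df')$ and the product \eqref{Levi-Milnor}.

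\textbf{Step 2: flatness constraints involving $e$.} The curvature equations $\Lu_{[e,a]}=[\Lu_e,\Lu_a]$ for $a\in\h$ become $\Lu_{D a}=[D,\Lu_a]$. In addition, $D$ is a derivation of $(\h,\cdot)$, since $\Lu_e$ acting by $\Lu_{e\cdot v}=[\Lu_e,\Lu_v]$ is exactly the derivation identity $D(a\cdot b)=Da\cdot b+a\cdot Db$ on $\h$. A derivation preserves $\df'=\h\cdot\h$, and being skew it also preserves $\af'$. For $a\in\af'$ the equation reads $\rho(Da)=[D|_{\df'},\rho(a)]$. The left side lies in $\rho(\af')$, an abelian subalgebra of $\so(\df')$. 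Proposition~\ref{solvable}(2) therefore forces $[D|_{\df'},\rho(a)]=0$, hence $\rho(Da)=0$, i.e. $D(\af')\subset\ker\rho=Z(\h)$. Skewness of $D$ on $\af'$ then gives $D(\af')\subset Z(\h)$ and $D((\ker\rho)^\perp\cap\af')=0$.

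\textbf{Step 3: normal form.} Let $\af=(\ker\rho)^\perp\cap\af'=\mathrm{span}(u_i)$, and split $\ker\rho=Z$ into $Z_0$, the part where $D$ acts nontrivially, and $Z_1=\ker D\cap Z$. Now $D|_{\df'}$ commutes with all $\rho(a)$, and $D|_Z$ is skew. Put $\df=\df'$. The family $\{\rho(a),D|_\df\}$ is commuting and skew, so Proposition~\ref{solvable}(1) gives a simultaneous orthonormal basis $(e_i,f_i)$ of $\df$ adapted to it; no generic refinement is needed. On $Z$, a normal form of the single skew map $D$ yields the basis $(z_j,\bar z_j)$ with the $\mu_j$, and $Z_1$ is its kernel. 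The brackets then follow from $[x,y]=x\cdot y-y\cdot x$ together with $u\cdot e=0$, and they match the list in (ii).

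\textbf{Step 4: Novikov property.} By Proposition~\ref{Novikovpr} it suffices to show $\Lu_{u\cdot v}=0$ and $[\Lu_u,\Lu_v]=0$. The products $u\cdot v$ lie in $\df$, where $\Lu$ vanishes. All $\Lu_u$ are simultaneously block-diagonal in the bases above, so they commute.

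The main obstacle is Step 2: the case where $D$ mixes $\af'$ with $Z(\h)$ must be handled carefully, using the relation $\Lu_{Da}=[D,\Lu_a]$ restricted to $\df$ together with Proposition~\ref{solvable}(2).
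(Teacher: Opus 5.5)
Your argument is correct and is essentially the paper's. Both hinge on Proposition~\ref{solvable}(2) applied to the abelian family of left multiplications of $\h$, which $\Lu_e$ normalizes via $\Lu_{e\cdot u}=[\Lu_e,\Lu_u]$; this forces $\Lu_e$ to commute with that family, to kill $\af$, and to preserve $\df$ and $Z$, after which a simultaneous normal form and Proposition~\ref{Novikovpr} finish the job. You only reorder things: you get $\Lu_e(\df')\subset\df'$ first from the derivation identity, then apply Proposition~\ref{solvable}(2) inside $\so(\df')$ rather than $\so(\h)$.

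One small slip in Step~4. $e\cdot z_j=\mu_j\bar z_j$ lies in $Z_0$, not in $\df$, so the correct statement is $\G\cdot\G\subset\df\oplus Z_0$. You then need the (true) fact that $\Lu_z=0$ for $z\in Z=\ker\rho$, and not only for $z\in\df$.
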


	\begin{proof}
		Let $\h = e^\perp$. Then $\h$ is a flat Euclidean Lie algebra which is invariant under $\Lu_e$. By Theorem \ref{milnor}, $\h$ admits an orthogonal decomposition
		\[
		\h = \df \oplus \af \oplus Z,
		\]
		where $Z$ is the center of $\h$, and there exists a representation of the abelian Lie algebra $\af$,
		\[
		\rho : \af \to \so(\df),
		\]
		such that $\displaystyle \bigcap_{a \in \af} \ker \rho(a) = \{0\}$.
		
		For any $u,v \in \h$, the flatness and Theorem \ref{milnor} imply 
		\[
		[\Lu_e,\Lu_u] = \Lu_{e \cdot u}, \qquad [\Lu_u,\Lu_v] = 0.
		\]
		Hence, the Lie subalgebra $\{\Lu_u \mid u \in \h\}$ is abelian and invariant under $\Lu_e$. By virtue of Proposition \ref{solvable},  
		\[
		[\Lu_e,\Lu_u] = 0, \quad \text{and} \quad \Lu_{e \cdot u} = 0, \quad \forall u \in \h.
		\]
			Thus $e \cdot u \in Z \oplus \df$ and therefore
		\[
		\mathrm{Im}\bigl((\Lu_e)_{|\h}\bigr) \subset Z \oplus \df.
		\]
		Since $\Lu_e$ is skew-symmetric, we obtain $(\Lu_e)_{|\h}(\af) = 0$.
		
		On the other hand, for any $a \in \af$, the relation $[\Lu_e,\Lu_a]=0$ implies that, for all $b \in \df$,
		\[
		\Lu_e(a \cdot b) = a \cdot (e \cdot b).
		\]
		Since $\df = \h \cdot \h$, it follows that $\Lu_e$ leaves $\df$ invariant. Consequently, it also leaves $Z$ invariant.
		
		Set
		\[
		Z_1 = \ker\bigl((\Lu_e)_{|Z}\bigr), \quad Z_0 = \mathrm{Im}\bigl((\Lu_e)_{|Z}\bigr), \quad A = (\Lu_e)_{|\df}.
		\]
		Then the Lie subalgebra $\rho(\af) + \mathbb{R}A$ of $\so(\df)$ is abelian, and Proposition \ref{solvable} applies.
		
		Finally, Proposition \ref{Novikovpr} ensures that $(\G,\cdot)$ is a Novikov algebra. This completes the proof.
	\end{proof}
	
	From Proposition \ref{G1}, we get our first model of flat Lorentzian Lie algebras listed in Table \ref{1}.

	\begin{pr}\label{double} Let $(\G,\br,\prs)$ be a flat Lorentzian Lie algebra and $\cdot$ its Levi-Civita product. Suppose that there exists  a vector $e\in\G\setminus\{0\}$ such that $\langle e,e\rangle=0$ and, for any $u\in\G$, $u\cdot e=\la (e)e$ for some $\la\in\G^*$. Then:
		\begin{enumerate}
		\item  $\G=\R e\oplus\h\oplus\R f$ where $\h=\{e,f\}^\perp$ is a Euclidean vector subspace and $f$ satisfies $\langle f,f\rangle=0$ and $\langle e,f\rangle=1$. We denote by $\prs_\h$ the restriction of $\prs$ to $\h$. \item 
		 $\h$ carries a product $\star$ such that its left multiplication operators are skew-symmetric and, for any $a,b\in\h$, the Levi-Civita product $\cdot$ of $\G$ has the form:
		\begin{equation}\label{Levi} \begin{cases} f.e=\la e,\; e.e=\al e,\; a.e=\langle a,u\rangle_\h e,\;e.a=Ea+\langle a,v\rangle_\h e,\\
				f.a=Fa+\langle a,w\rangle_\h e,\;a.b=a\star b+\langle Aa,b\rangle_\h e,\\
				a.f=-\langle a,u\rangle_\h f-Aa,\;f.f=-\la f-w,\; e.f=-v-\al f,
		\end{cases}\end{equation}where $E,F,A:\h\to\h$ are endomorphisms such that $E,F\in\so(\h)$, $u,v,w\in\h$,  $\al,\la\in\R$.  Denote by $\Lu_a$ and $\Ru_a$ the left and the right multiplication operators of $\star$.

	\item 	The vanishing of the curvature of $(\G,\br,\prs)$ is equivalent to
		\begin{equation}\label{curvature} \begin{cases}
				\ass_\star(a,b,c)-\ass_\star(b,a,c)=(\langle Ab,a\rangle_\h-\langle Aa,b\rangle_\h)  Ec,\\
				A([a, b])+b\star Aa-a\star Ab+\left(\langle Aa,b\rangle_\h-\langle Ab,a\rangle_\h\right) v
				+\langle b,u\rangle_\h Aa-\langle a,u\rangle_\h Ab=0,\\
				[E,\Lu_a]=\Lu_{Ea}+(\langle a,v\rangle_\h-\langle a,u\rangle_\h) E,\\
				[E,A]a-\langle a,v\rangle_\h v-a\star v+\al Aa=0,\\
				[F,\Lu_a]=\Lu_{Fa+Aa}+\langle a,w\rangle_\h E+\langle a,u\rangle_\h F,\\
				[F,A]a=A^2a-\la Aa+2\langle a,u\rangle_\h w+\langle a,w\rangle_\h v+a\star w,\\
				[F,E]=\la E+\al Fa+\Lu_v,\\
				Av-Fv+Ew+2\al w=0,\\ 
				A^*u+\la u-Fu+\al w=0,\\
				\langle u,v\rangle_\h=-2\al\la,\; \langle u,[a,b]_\star\rangle_\h=\al\left(\langle Ab,a\rangle_\h-\langle Aa,b\rangle_\h\right),\;  Eu=\al(u-v),
		\end{cases} \end{equation}	for any $a,b,c\in\h$, where $\ass_\star(a,b,c)=(a\star b)\star c-a\star(b\star c)$.
	\item  The modular vector of $\G$ is given by
	\begin{equation}\label{hm}
		\mathbf{h}=\mathbf{h}_0-v+(\tr(A)+\la)e-\al f,
	\end{equation}where $\mathbf{h}_0$ is the modular vector of $(\h,\star)$ given by
	\[ \langle \mathbf{h}_0,a\rangle_\h=-\tr(\Ru_a),\quad a\in\h. \]
\end{enumerate}

	\end{pr}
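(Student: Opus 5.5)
The plan is to work in a Witt-type frame adapted to $e$ and to read everything off from two facts: the Levi-Civita operators $\Lu_u$ are skew-symmetric, and the connection is torsion-free and flat.

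\textbf{Step 1: the frame.} Since $e$ is null and nonzero and $\prs$ is Lorentzian, I pick a null vector $f$ with $\langle e,f\rangle=1$. Then $\h=\{e,f\}^\perp$ is nondegenerate and positive definite, which gives item 1.

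\textbf{Step 2: the form \eqref{Levi} of the product.} By hypothesis, $u\cdot e=\lambda(u)e$ for every $u$. This gives $e.e=\al e$, $f.e=\la e$ and $a.e=\langle a,u\rangle_\h e$; here I write $\al=\lambda(e)$, $\la=\lambda(f)$, and $u$ is the vector of $\h$ representing $\lambda|_\h$.

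For each $x\in\G$, skew-symmetry of $\Lu_x$ gives $\langle x\cdot y,e\rangle=-\langle x\cdot e,y\rangle=-\lambda(x)\langle e,y\rangle$. Hence the $f$-component of $x\cdot y$ is $-\lambda(x)\langle e,y\rangle$. In particular $\h\cdot\h\subset\h\oplus\R e$ and $e\cdot\h\subset\h\oplus\R e$.

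Next I decompose each product:
\begin{itemize}
\item $a\cdot b=a\star b+\beta(a,b)e$, which defines $\star$ and the bilinear form $\beta(a,b)=\langle Aa,b\rangle_\h$;
\item $e\cdot a=Ea+\langle a,v\rangle_\h e$;
\item $f\cdot a=Fa+\langle a,w\rangle_\h e$.
\end{itemize}
Skew-symmetry of $\Lu_e$, $\Lu_f$ and $\Lu_a$ restricted to $\h$ forces $E,F\in\so(\h)$ and the operators $b\mapsto a\star b$ to be skew. The remaining products $x\cdot f$ are determined by the pairings:
\begin{itemize}
\item $\langle a\cdot f,b\rangle=-\langle a\cdot b,f\rangle=-\langle Aa,b\rangle_\h$;
\item $\langle a\cdot f,f\rangle=0$ and $\langle a\cdot f,e\rangle=-\langle a\cdot e,f\rangle=-\langle a,u\rangle_\h$.
\end{itemize}
This gives $a.f=-\langle a,u\rangle_\h f-Aa$, and the same computation produces $e.f$ and $f.f$. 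With this, \eqref{Levi} is established.

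\textbf{Step 3: the curvature system \eqref{curvature}.} Write the bracket as $[x,y]=x\cdot y-y\cdot x$. Flatness is equivalent to $\Lu_{[x,y]}=[\Lu_x,\Lu_y]$, and I evaluate this on each pair of $(e,f,\h)$ applied to each of $e,f,\h$. I then project onto the $e$, $\h$ and $f$ components, systematically.

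Since both sides are skew-symmetric, many components are redundant. Also, every $\Lu_x$ preserves the line $\R e$, so applying both sides to $e$ only yields identities for $\lambda$, namely $\lambda([x,y])=0$. I expect the following correspondence:
\begin{itemize}
\item $(a,b)$ applied to $c\in\h$, $\h$-component: the first equation;
\item $(a,b)$, $e$-component, or equivalently applied to $f$: the second equation;
\item $(e,a)$: the third and fourth equations;
\item $(f,a)$: the fifth and sixth equations;
\item $(e,f)$: the seventh and eighth equations;
\item the $\lambda$-conditions $\lambda([x,y])=0$: the scalar relations in the last lines, e.g.\ $\lambda([a,b])=\langle u,[a,b]_\star\rangle_\h+\al(\ldots)$.
\end{itemize}

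\textbf{Step 4: the modular vector.} This is a trace computation, using $\tr(\mathrm{ad}_x)=-\tr(\Ru_x)$ and computing $\tr\Ru_x$ in the frame $(e,\h,f)$, with $\langle\mathbf{h},x\rangle=-\tr\Ru_x$. For example:
\begin{itemize}
\item $\tr\Ru_a=\tr(\Ru^\star_a)+(e\text{-coefficient of } e\cdot a)+(f\text{-coefficient of } f\cdot a)$, which gives the $-v$ term together with the $\mathbf{h}_0$ term;
\item $\Ru_e$ and $\Ru_f$ give the coefficients of $e$ and $f$, namely $\tr A+\la$ and $-\al$.
\end{itemize}
Pairing against $e$, $f$ and $a$ then yields \eqref{hm}.

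\textbf{Main obstacle.} The real difficulty is bookkeeping in Step 3: it is easy to miss or duplicate components, or to get signs wrong. I would organize it by first listing all $\Lu_x$ as block matrices in the basis $(e,\h,f)$. Then the components of $\Lu_{[x,y]}-[\Lu_x,\Lu_y]$ can be compared mechanically, and skew-symmetry lets me restrict to the independent blocks.
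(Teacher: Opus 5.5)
Your proposal is correct and is essentially the paper's proof. Since $\mathrm{K}(x,y)z=\ass(x,y,z)-\ass(y,x,z)$, evaluating $\Lu_{[x,y]}-[\Lu_x,\Lu_y]$ on $e$ and on $\h$ for the pairs $(a,b),(e,a),(f,a),(e,f)$, and discarding components made redundant by skew-symmetry, is exactly the paper's reduction to $Q(a,b,c)$, $Q(a,b,e)$, $Q(a,e,b)$, $Q(a,e,e)$, $Q(a,f,b)$, $Q(a,f,e)$, $Q(e,f,a)$, $Q(e,f,e)$; the modular vector is likewise obtained by the same trace of $\Ru_x$ in the null frame $(e,\h,f)$. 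Your observation $\mathrm{K}(x,y)e=\lambda([x,y])e$ neatly packages the four $Q(\cdot,\cdot,e)$ computations: these give the last-line scalar relations (note that $\lambda([a,e])=0$ yields $Eu=\al(v-u)$, as in the appendix, not the statement's $Eu=\al(u-v)$) and also the ninth equation $A^*u+\la u-Fu+\al w=0$, which comes from $\lambda([a,f])=0$.
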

	
	\begin{proof}\begin{enumerate}
	\item 	Let $\G_1 = e^\perp$. Then $\G_1$ contains $e$ and is a left ideal with respect to the Levi-Civita product. Choose a subspace $\h$ such that
		\[
		\G_1 = \mathbb{R}e \oplus \h.
		\]
		The restriction $\langle \cdot,\cdot \rangle_{|\h}$ is nondegenerate and positive definite, hence $(\h,\langle \cdot,\cdot \rangle)$ is a Euclidean vector space. Moreover, $\h^\perp$ is a two-dimensional Lorentzian subspace containing $e$. Therefore, we can choose $f \in \h^\perp$ such that
		\[
		\langle f,f\rangle = 0 \quad \text{and} \quad \langle e,f\rangle = 1.
		\]
		It follows that
		$
		\G = \mathbb{R}e \oplus \h \oplus \mathbb{R}f,
		$ 
	 $\R e$ and $e^\perp=\mathbb{R}e \oplus \h$ are left ideals for  the Levi-Civita product. For any $a,b\in\h$, write $a\cdot b=a\star b+\langle Aa,b\rangle e$ where $a\star b\in\h$. This defines a product on $\h$ and its left multiplication operators are symmetric. 
	 Furthermore, one can see easily that the Levi-Civita product takes the form given in \eqref{Levi}.
		
	\item	The system of equations \eqref{curvature} is obtained through a direct but lengthy computation; we refer to the appendix (Section \ref{section9}, Subsection \ref{subsection91}) for the details.
		
	\item	Finally, using \eqref{h}, the modular vector $\mathbf{h}$ can be computed from \eqref{Levi} and the relation
		\[
		\langle \mathbf{h},x\rangle
		= -\sum_{i=1}^n \langle a_i \cdot x, a_i \rangle
		- \langle e \cdot x, f \rangle
		- \langle f \cdot x, e \rangle,
		\]
		where $(a_1,\ldots,a_n)$ is any orthonormal basis of $\h$.\qedhere
	\end{enumerate}
	\end{proof}
	
	Note that the Lie bracket on $\G = \mathbb{R}e \oplus \h \oplus \mathbb{R}f$ is given by
	\begin{equation}\label{bracket}
		\begin{cases}
			[f,e]=v+\lambda e+\alpha f,\\
			[e,a]=Ea+\langle a,v-u\rangle_\h\, e,\\
			[a,b]=[a,b]_\star+\langle (A-A^*)a,b\rangle_\h\, e,\\
			[f,a]=(F+A)a+\langle a,w\rangle_\h\, e+\langle a,u\rangle_\h\, f,\quad a,b\in\h,
		\end{cases}
	\end{equation}
	where $[a,b]_\star = a \star b - b \star a$.
	
	Thus, $\G$ is obtained from the algebra $(\h,\star)$ by a generalized double extension procedure determined by the data $(E,F,A,u,v,w,\lambda,\alpha)$ satisfying \eqref{curvature}.
	
	We emphasize that the construction described in Proposition \ref{double} is significantly more general than the classical double extension process of \cite{Aub-Med}. In the classical setting, the algebra $(\h, [\cdot,\cdot]_\star, \langle\cdot,\cdot\rangle_\h)$ is required to be a flat pseudo-Euclidean Lie algebra. In contrast, in Proposition \ref{double}, the algebra $(\h, \star, \langle\cdot,\cdot\rangle_\h)$ is merely a Euclidean algebra — that is, an algebra whose left multiplication operators are skew-symmetric — with no flatness assumption. In particular, $(\h, [\cdot,\cdot]_\star)$ need not be a Lie algebra, and $\star$ need not satisfy the left-symmetric identity. The flatness of the extension $\G = \mathbb{R}e \oplus \h \oplus \mathbb{R}f$ is then entirely encoded in the system \eqref{curvature}, which couples the data $(E, F, A, u, v, w, \alpha, \lambda)$ to the algebra structure of $\h$.

	 For the reader’s convenience, we briefly recall the classical double extension procedure  in order to highlight the differences.
	
	Let $(\h,[\cdot,\cdot]_0,\langle \cdot,\cdot \rangle_0)$ be a flat pseudo-Euclidean Lie algebra, and denote by $\star$ its Levi-Civita product. Let $A,D : \h \to \h$ be endomorphisms, set $F = D - A$, and assume that $F$ is skew-symmetric. Let $w \in B$ and $\lambda \in \mathbb{R}$ be such that
	\begin{equation}\label{doubleM}
	\begin{cases}
		A([a,b]_0)=a \star A(b)-b \star A(a),\\
		[D,A]=A^2-\lambda A + \mathrm{R}_{w},\\
		a \star A(b)-A(a \star b)=D(a)\star b+a \star D(b)-D(a \star b),
	\end{cases}
	\end{equation}
	for all $a,b \in \h$.
	
	Consider the vector space $\G = \mathbb{R}e \oplus \h \oplus \mathbb{R}f$ endowed with the inner product $\langle \cdot,\cdot \rangle$ extending $\langle \cdot,\cdot \rangle_0$, such that $B$ is orthogonal to $\mathrm{span}\{e,f\}$, and
	\[
	\langle e,e\rangle=\langle f,f\rangle=0, \quad \langle e,f\rangle=1.
	\]
	Define a Lie bracket on $\G$ by
	\begin{equation}\label{classical_bracket}
		[f,e]=\lambda e, \quad [f,a]=D(a)+\langle w,a\rangle_0 e, \quad
		[a,b]=[a,b]_0+\langle (A-A^*)a,b\rangle_0 e,
	\end{equation}
	for all $a,b \in \h$. Then $(\G,[\cdot,\cdot],\langle \cdot,\cdot \rangle)$ is a flat pseudo-Euclidean Lie algebra, called the \emph{double extension} of $(\h,[\cdot,\cdot]_0,\langle \cdot,\cdot \rangle_0)$ by the data $(A,D,\lambda,w)$.
	
	Now we give the conditions for which the Levi-Civita product given by \eqref{Levi} defines a Novikov algebra structure on $\G$. 
	
	\begin{pr}\label{NENO}With the notations an hypothesis of Proposition \ref{double},
		$\G=\R e\oplus\h\oplus\R f$ endowed with the Levi-Civita product given by \eqref{Levi} is a Novikov algebra if and only if
			\begin{equation}\label{NE=0} \begin{cases} a\star (b\star c)-b\star (a\star c)=0,\; \Lu_{a\star b}=-\langle Aa,b\rangle_\h E,\\
		b\star Aa-a\star Ab=\langle a,u\rangle_\h Ab-\langle b,u\rangle_\h Aa,\\
		A(a\star b)=-\langle Aa,b\rangle_\h v,\;A^2a=-\langle a,u\rangle_\h w,\\
		[E,F]=[E,\Lu_a]=[F,\Lu_a]=0,\\
		EA=\Ru_v+\langle\bullet, u\rangle_\h v,\;
		FA=\Ru_w-\la A+\langle\bullet, u\rangle_\h w,\;\\
		AE=-\langle\bullet,v\rangle_\h v,\; AF=-\langle\bullet,w\rangle_\h v,\\
		\Lu_{Ea}=-\langle a,v\rangle_\h E,\Lu_{Fa}=-\langle a,w\rangle_\h E,\; \Lu_{Aa}=-\langle a,u\rangle_\h F,\\
		\la v-Ew+Fv=0,\\
		\al=0,\;\la v=0,\; \la E=0,\;|u|E=0,\; |u|v=0,\; Eu=Fu=0,\\
		\; A^*u=-\la u,\;\Ru_u=0,\\
		Aw=-\la w,\langle u,w\rangle_\h=-\la^2,\;\Lu_w=-\la F,\;\Lu_v=0,\; Av=0.
	\end{cases} \end{equation}
	
			\end{pr}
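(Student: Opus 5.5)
The plan is to use the characterization of Proposition \ref{Novikovpr}: since $\G$ is flat, $\Lu_{[x,y]}=[\Lu_x,\Lu_y]$, and $(\G,\cdot)$ is Novikov if and only if $\Lu_{x\cdot y}=0$ and $[\Lu_x,\Lu_y]=0$ for all $x,y\in\G$. Flatness is assumed throughout (the hypotheses of Proposition \ref{double}), so \eqref{curvature} is available. Equivalently, the Novikov condition is the right-commutativity $(x\cdot y)\cdot z=(x\cdot z)\cdot y$ combined with left-symmetry. The proof is then a systematic decomposition of these operator identities along $\G=\R e\oplus\h\oplus\R f$, with $x,y$ running over $e,f$ and $a,b\in\h$, reading off each block from \eqref{Levi}.

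First I write the left multiplications as block matrices in the basis adapted to $\R e\oplus\h\oplus\R f$. From \eqref{Levi}, $\Lu^\G_e$ acts as $e\mapsto \al e$, $a\mapsto Ea+\langle a,v\rangle_\h e$, $f\mapsto -v-\al f$. $\Lu^\G_f$ acts as $e\mapsto\la e$, $a\mapsto Fa+\langle a,w\rangle_\h e$, $f\mapsto -\la f-w$. $\Lu^\G_a$ acts as $e\mapsto\langle a,u\rangle_\h e$, $b\mapsto a\star b+\langle Aa,b\rangle_\h e$, $f\mapsto -\langle a,u\rangle_\h f-Aa$.

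Next I impose $\Lu_{x\cdot y}=0$ for all pairs. Using $x\cdot e=\la(x) e$ etc., the condition $\Lu_{a\cdot e}=\langle a,u\rangle_\h\Lu_e=0$ forces $|u|\,E=0$, $|u|v=0$ and $|u|\al=0$. Then $\Lu_{e\cdot e}=\al\Lu_e=0$ together with $\Lu_{f\cdot e}=\la\Lu_e=0$ yields $\al\Lu_e=0$ and $\la\Lu_e=0$, hence $\la E=0$ and $\la v=0$. Also $\al v=0$, and the $(f,f)$-coefficient of $\al\Lu_e$ gives $\al^2=0$, so $\al=0$. The products $a\cdot b$, $e\cdot a$, $f\cdot a$ give $\Lu_{a\star b}+\langle Aa,b\rangle_\h\Lu_e=0$, $\Lu_{Ea}=-\langle a,v\rangle_\h E$ and $\Lu_{Fa}=-\langle a,w\rangle_\h E$, together with their vector parts such as $A(a\star b)=-\langle Aa,b\rangle_\h v$. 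The products $a\cdot f$, $f\cdot f$, $e\cdot f$ give $\Lu_{Aa}=-\langle a,u\rangle_\h F$ and $\Lu_w=-\la F$, $\Lu_v=0$, plus the accompanying vector equations ($A^2a=-\langle a,u\rangle_\h w$, $Aw=-\la w$, $\langle u,w\rangle_\h=-\la^2$, $Av=0$, $\Ru_u=0$, $A^*u=-\la u$).

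Finally I impose $[\Lu_x,\Lu_y]=0$ on the pairs $(a,b)$, $(e,a)$, $(f,a)$, $(e,f)$ and expand each block. The $\h\to\h$ blocks give $a\star(b\star c)=b\star(a\star c)$, $[E,\Lu_a]=[F,\Lu_a]=[E,F]=0$ and the compositions $EA,FA,AE,AF$. The $\h\to\R e$ and $\R f\to\h$ blocks give the mixed relations $b\star Aa-a\star Ab=\langle a,u\rangle_\h Ab-\langle b,u\rangle_\h Aa$ and $\la v-Ew+Fv=0$, and $Eu=Fu=0$ comes out of the $e$-components. For the converse, I check that the listed equations reproduce every block. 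The main obstacle is bookkeeping rather than ideas: keeping the signs straight in the $\R f$-column, where $\Lu_x f$ has components in all three summands. I would also use the relations in \eqref{curvature} (already valid by flatness) to show that the seemingly extra block conditions are redundant, so that the list is exactly \eqref{NE=0}.
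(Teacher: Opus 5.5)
Your proposal is correct and follows the paper's own route: the appendix proof likewise invokes Proposition \ref{Novikovpr}, expands $\Lu_{x\cdot y}=0$ and $[\Lu_x,\Lu_y]=0$ blockwise for $x,y\in\{e,f\}\cup\h$, and takes \eqref{NE=0} to be the union of the resulting conditions. Some conditions are attributed to the wrong blocks, though, and no appeal to \eqref{curvature} is needed:
\begin{itemize}
\item The commutators $[\Lu_x,\Lu_y]$ annihilate $e$ identically, since $x\cdot e\in\R e$ for all $x$. So $Eu=Fu=0$ and the $AE$, $AF$ relations come from $\Lu_{e\cdot a}=\Lu_{f\cdot a}=0$, not from commutators.
\item $\Ru_u=0$ comes from $\Lu_{a\cdot b}=0$.
\item The duplicated blocks coincide because the operators are skew-symmetric.
\item $\langle u,v\rangle_\h=0$ already follows from $|u|v=0$.
\end{itemize}
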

			
	\begin{proof}It is a
		direct but lengthy computation using Proposition \ref{Novikovpr}; we refer to the appendix (Section \ref{section9}, Subsection \ref{Subsection92}) for the details.
	\end{proof}

\section{Complete description of flat Lorentzian Lie algebras: Proof of Theorem \ref{main} } \label{section4}

 By Proposition \ref{double}, classifying flat Lorentzian Lie algebras of Kundt type is equivalent to determining all pairs $((\h, \star, \langle\cdot,\cdot\rangle_\h), (E, F, A, u, v, w, \alpha, \lambda))$ satisfying system \eqref{curvature}. Each solution gives rise, via the Lie bracket \eqref{bracket}, to a flat Lorentzian Lie algebra $\g = \mathbb{R}e \oplus \h \oplus \mathbb{R}f$ and all flat Lorentzian Lie algebras of Kundt type are obtained in this way. The goal of this section is to find all such solutions. The strategy is to use the expression \eqref{hm} for the modular vector to split the problem into five mutually exclusive and exhaustive cases, as explained below, and to solve \eqref{curvature} in each case by applying Theorem \ref{milnor} and the three key lemmas of Subsection \ref{subsection25}. In each case, we first determine the structure of $(\h, \star)$, then the endomorphisms $E$, $F$, $A$, and finally the vectors $u$, $v$, $w$, proceeding step by step.

 We begin the first step of our strategy by  distinguishing two main cases: the unimodular case and the non-unimodular case.
 
 \begin{enumerate}
 	\item \textbf{The unimodular case.}  
 	Assume that $(\h,\star,\langle \cdot,\cdot \rangle_\h)$ and the data $(E,F,A,u,v,w,\alpha,\lambda)$ satisfy \eqref{curvature} and that the Lie algebra $\G=\mathbb{R}e \oplus \h \oplus \mathbb{R}f$ is unimodular. This is equivalent to the vanishing of the modular vector given by \eqref{hm}, that is,
 	\[
 	v=\mathbf{h}_0 \quad \text{and} \quad \alpha=\tr(A)+\lambda=0.
 	\]
 	The last equation in \eqref{curvature} implies $Eu=0$ and $\langle u,v\rangle=0$. Taking $a=u$ in the third equation of \eqref{curvature}, we obtain
 	\[
 	[E,\Lu_u]=|u|^2 E.
 	\]
 	Since $\mathrm{ad}_{\Lu_u}:\so(\h)\to\so(\h)$ has no nonzero real eigenvalues, it follows that $|u|^2 E=0$.
 	
 	If $E=0$, then the first equation in \eqref{Levi} implies that $(\h,[\cdot,\cdot]_\star,\langle \cdot,\cdot \rangle_\h)$ is a flat Euclidean Lie algebra, hence unimodular. Therefore $\mathbf{h}_0=v=0$. We are thus led to the following subcases:
 	\begin{enumerate}
 		\item[(U1)] $u \neq 0$. In this case, $\alpha=0$, $\lambda+\tr(A)=0$, $E=0$, and $v=0$.
 		
 		\item[(U2)] $u=0$ and $E=0$. In this case, $\alpha=0$, $\lambda+\tr(A)=0$, and $v=0$.
 		
 		\item[(U3)] $E \neq 0$. In this case, $u=0$, $\alpha=0$, and $\lambda+\tr(A)=0$. From the third relation in \eqref{curvature}, we deduce that if $a \in \ker E$, then $\langle a,v\rangle=0$, hence $v \in \mathrm{Im}\,E$. Thus $v=E(v_0)$ for some $v_0 \in \im E$. Combining the seventh and the third equations of \eqref{curvature}, we obtain
 		\[
 		[F+\Lu_{v_0},E]=\lambda E,
 		\]
 		which implies $\lambda=0$.
 	\end{enumerate}
 	
 	\item \textbf{The non-unimodular case.}  
 	Applying Proposition \ref{pr1} to the modular vector $\mathbf{h}$ given by \eqref{hm}, we must have $\langle \mathbf{h},\mathbf{h}\rangle=0$. This is equivalent to
 	\[
 	v=\mathbf{h}_0 \quad \text{and} \quad \alpha(\tr(A)+\lambda)=0.
 	\]
 	We distinguish two subcases:
 	\begin{enumerate}
 		\item[(NU1)] $\alpha=0$ and $\tr(A)+\lambda \neq 0$. Then $\mathbf{h}=(\lambda+\tr(A))e$. By Proposition \ref{pr1}, the operator $\Ru_e$ is symmetric and $H=\mathbb{R}e$ is a two-sided ideal in $H^\perp$. It follows from \eqref{Levi} that $E=0$ and $u=v=0$.
 		
 		\item[(NU2)] $\alpha \neq 0$ and $\lambda+\tr(A)=0$. Then $\mathbf{h}=\alpha f$. Hence $\Ru_f$ is symmetric and $H=\mathbb{R}f$ is a two-sided ideal in $H^\perp$. Consequently, by virtue of \eqref{Levi}, $u=v=w=0$, $A=F=0$, and $\lambda=0$.
 	\end{enumerate}
 \end{enumerate}
 
 We now proceed to solve the system \eqref{curvature} in each of the cases listed above.
 
 In the sequel, a Euclidean algebra is an algebra $(\h,\star,\prs)$ endowed with a scalar product such the left multiplication operators are skew-symmetric.
 \subsection{ Solutions of \eqref{curvature} in the case (U1)}\label{subsection41}

 \begin{pr}\label{u} Let $(\h,\star,\prs)$ be a Euclidean algebra together with the data $(E,F,A,u,v,w,\al,\la)$ satisfying \eqref{curvature} such the extension $\G$ is unimodular and $u\not=0$. Then:\begin{enumerate}
 	\item[$(i)$]  $E=0$, $v=0$, $\al=\tr(A)+\la=0$,\item[$(ii)$]   $\h$ splits orthogonally $\h=\df\oplus\A\oplus\R u$,
 	\item[$(iii)$]  there exists $\rho:\af=\A\oplus\R u\too\so(\df)$ satisfying $\bigcap_{a\in\af}\ker\rho(a)=\{0\}$,   $h\in\df$ and $n\in\A$ such that 
 	\[ \begin{cases}
 		\Lu_d=0,\; d\in\df,\; (\Lu_{a})_{|\af}=0,\;(\Lu_{a})_{|\df}=\rho(a),\; a\in\af,\\
		A_{|\df}=0,\; Aa=\Ru_\h(a)=\rho(a)h,\; a\in\A,\; Au=m+n-\la u,\\
		F_{|\df}=-\frac1{|u|^2}(\rho(n)-\la\rho(u)),\; F_{|\af}=0,\\
		m=\rho(u)h+|u|^2h,\; w=w_1+w_2+w_3,\\
		w_1=\la h-\frac1{|u|^2}\rho(n)(h)+\frac{\la}{|u|^2}\rho(u)h,\; w_2=\frac{\la n}{|u|^2},\; w_3=-\frac{\la^2}{|u|^2}u.
 	\end{cases} \]
 	
 	\end{enumerate}
 	Moreover,  the extension $\G=\R e\oplus\h\oplus\R f$ endowed with the Levi-Civita product \eqref{Levi} is a Novikov algebra and the structure of flat Lorentzian Lie algebra in $\G$ is given in Table \ref{2}.

 \end{pr}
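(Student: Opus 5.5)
Item $(i)$ is exactly the case analysis (U1) carried out above: unimodularity gives $v=\mathbf{h}_0$ and $\al=\tr(A)+\la=0$. The last relation of \eqref{curvature} then gives $Eu=0$ and $\langle u,v\rangle_\h=0$. The third relation at $a=u$ gives $[E,\Lu_u]=|u|^2E$, which forces $E=0$ since $u\neq0$. Then $\mathbf{h}_0=0$, hence $v=0$.

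With $E=0$, $v=0$ and $\al=0$, the first equation of \eqref{curvature} says $\star$ is left-symmetric. So $(\h,[\;,\;]_\star,\prs_\h)$ is a flat Euclidean Lie algebra, and Theorem \ref{milnor} gives $\h=\df\oplus\af$ with the representation $\rho$ and the Levi-Civita product \eqref{Levi-Milnor}. This yields the formulas for $\Lu_d$ and $\Lu_a$.

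The last relation of \eqref{curvature} now reads $\langle u,[a,b]_\star\rangle_\h=0$, so $u\perp\df$, i.e. $u\in\af$. Set $\A=\af\cap u^\perp$; this gives $(ii)$. The ninth relation becomes $A^*u=Fu$.

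Next, the second relation of \eqref{curvature} is the identity of Lemma \ref{AM} perturbed by the terms $\langle b,u\rangle_\h Aa-\langle a,u\rangle_\h Ab$. I would test it on the pairs $(d,d')$, $(a,d)$ and $(a,b)$ with $d,d'\in\df$ and $a,b\in\af$.
\begin{itemize}
\item On $(a,d)$ with $a=u$: one gets $\rho(u)A_1d-A_1\rho(u)d=|u|^2A_1d$ (up to sign). Writing $A=\begin{pmatrix}A_1&B\\ C&A_2\end{pmatrix}$, the operator $\mathrm{ad}_{\rho(u)}$ has only imaginary eigenvalues, which forces $A_1=0$.
\item On $(d,d')$: since $A_1=0$, this forces $C=0$, so $A(\df)\subset\df$ and $A|_\df=0$.
\item On $(a,b)$ with $a,b\in\A$: Lemma \ref{LB} gives $h$ with $B(a)=\rho(a)h$.
\item On the pairs $(u,a)$: these determine $A_2|_\A=0$ and $Au=m+n-\la u$, with $m=\rho(u)h+|u|^2h$ and $n\in\A$ free. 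The coefficient $-\la$ of $u$ comes from $\tr(A)=-\la$.
\end{itemize}

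Then the fifth relation $[F,\Lu_a]=\Lu_{Fa+Aa}+\langle a,w\rangle_\h E+\langle a,u\rangle_\h F$ (with $E=0$) is the key to $F$. Taking $a=u$ gives $[F,\rho(u)]-|u|^2F=\Lu_{Fu+Au}$ on $\h$. Evaluating this on $\af$, where all $\Lu$ vanish, shows that $F$ kills $\af$ modulo a contribution coming from $\df$. Combined with skew-symmetry and $A^*u=Fu$, this should give $F|_\af=0$ and $F(\df)\subset\df$. On $\df$, splitting $F|_\df$ into its part commuting with $\rho(u)$ and its eigencomponents for $\mathrm{ad}_{\rho(u)}$ (again only imaginary eigenvalues against the real shift $|u|^2$) gives $F|_\df=-\frac1{|u|^2}\rho(Fu+Au)$. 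Using $Fu=0$ and $\rho(m)=0$ (as $m\in\df$), this is $-\frac1{|u|^2}(\rho(n)-\la\rho(u))$.

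Finally, the sixth relation of \eqref{curvature} (evaluated at $a=u$ and on $\A$, $\df$), together with the eighth and ninth relations, should be solved for $w$, yielding $w_1,w_2,w_3$. The Novikov property then follows by checking \eqref{NE=0}, or directly via Proposition \ref{Novikovpr}, and the brackets \eqref{bracket} give Table \ref{2}.

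The main obstacle will be the determination of $F$ and $w$. This requires carefully separating components along $\df$, $\A$ and $\R u$ in the fifth and sixth equations, and verifying consistency between the free parameters $(h,n)$ and the remaining equations.
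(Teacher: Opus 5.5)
Your outline follows the paper's route: reduction to the system with $E=0$, $v=0$, $\al=0$, the Milnor splitting $\h=\df\oplus\A\oplus\R u$, then a blockwise analysis of $A$, $F$, $w$. That analysis rests on two facts: $\rho(u)+c\,\mathrm{Id}_{\df}$ is invertible for real $c\neq0$, and $\mathrm{ad}_{\rho(u)}$ has purely imaginary spectrum. However, the determination of $A$ fails at the choice of $h$. Lemma~\ref{LB} cannot be applied to $\rho_{|\A}$, because its hypothesis $\bigcap_{a\in\A}\ker\rho(a)=\{0\}$ is false in general. Only the intersection over all of $\af$ is trivial; if $\af=\R u$, then $\A=\{0\}$ and the intersection is all of $\df$. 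The relations for pairs in $\A$ leave the component of $B(a)$ in $\bigcap_{b\in\A}\ker\rho(b)$ completely free. Even granting some $h$ with $B(a)=\rho(a)h$ on $\A$, the pairs $(a,u)$ only give $\rho(a)\bigl(B(u)-(\rho(u)+|u|^2\mathrm{Id})h\bigr)=0$, not $B(u)=m$.

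The missing idea is to read off $h$ from the $u$-column, as the paper does. The $\df$-component of the $(a,u)$ relation is $(\rho(u)+|u|^2\mathrm{Id})B(a)=\rho(a)B(u)$. Setting $h:=(\rho(u)+|u|^2\mathrm{Id})^{-1}B(u)$ then gives $B(a)=\rho(a)h$ and $B(u)=m$ simultaneously, since this operator commutes with $\rho(a)$; Lemma~\ref{LB} is not needed.

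There are two smaller errors in the same part. First, the pair $(d,d')$ yields only the identity $0=0$ (since $\Lu_d=0$ and $u\perp\df$). So $C=0$ must be taken from the $\af$-component of the pair $(u,d)$, namely $C(\rho(u)-|u|^2\mathrm{Id})=0$. Second, with $\al=0$ the ninth relation reads $A^*u=Fu-\la u$, not $A^*u=Fu$. Your version, together with $\langle Au,u\rangle_\h=-\la|u|^2$ (which you correctly get from $\tr(A)=-\la$), would force $\la=0$. You do not need that relation for $F$. Evaluating $[F,\Lu_u]-|u|^2F=\Lu_{Fu+Au}$ on $x\in\af$ gives $(\rho(u)+|u|^2\mathrm{Id})(Fx)_{\df}+|u|^2(Fx)_{\af}=0$, hence $F_{|\af}=0$ at once, exactly as in the paper. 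Your computation of $F_{|\df}$ is then correct.

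The second gap is that $w$ is never determined. The eighth and ninth relations cannot help: the eighth is vacuous here, and the ninth no longer contains $w$ when $\al=0$. Only the sixth relation involves $w$. Evaluated on $\A$, it has the same degeneracy as above: it fixes $w_1$ only modulo $\bigcap_{a\in\A}\ker\rho(a)$. One must also evaluate it at $a=u$. Its $\A$- and $\R u$-components give $w_2=\frac{\la}{|u|^2}n$ and $w_3=-\frac{\la^2}{|u|^2}u$. Its $\df$-component reads $(\rho(u)+2|u|^2\mathrm{Id})w_1=F(m)-\rho(n)h+2\la m$, which determines $w_1$ by invertibility and yields the stated formula. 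The paper phrases this as follows: the residual $X$, with $\rho(a)X=0$ for $a\in\A$, satisfies $\rho(u)X=-2|u|^2X$, hence $X=0$. Without this step, the formulas for $w$ in $(iii)$ are not established, and therefore neither are Table~\ref{2} and the Novikov verification.
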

 
\begin{proof}
	We have seen that in the unimodular case with $u \neq 0$ one has $\alpha=0$, $v=0$ and $E=0$. Hence system \eqref{curvature} reduces to
	\begin{equation}\label{Leviu}
		\begin{cases}
			\ass_\star(a,b,c)-\ass_\star(b,a,c)=0,\\
			A([a, b])+b\star Aa-a\star Ab
			+\langle b,u\rangle Aa-\langle a,u\rangle Ab=0,\\
			[F,\Lu_a]=\Lu_{Fa+Aa}+\langle a,u\rangle_\h F,\\
			2\langle a,u\rangle w+A^2a-\lambda Aa+a\star w=
			[F,A]a,\\ 
			A^*u+\lambda u-Fu=0,\\
			\langle u,[a,b]_\star\rangle=0,\;a,b,c\in\h.
		\end{cases}
	\end{equation}

	The first equation means that $(\h,[\cdot,\cdot]_\star,\langle\cdot,\cdot\rangle_\h)$ is a flat Euclidean Lie algebra. By Theorem \ref{milnor}, $\h$ decomposes orthogonally as
	\[
	\h=\df\oplus\af,
	\]
	and there exists a representation of an abelian Lie algebra
	\[
	\rho:\af\to\so(\df), \qquad \bigcap_{a\in\af}\ker\rho(a)=\{0\},
	\]
	such that the product $\star$ is given by
	\[
	a\star b=
	\begin{cases}
		0 & \text{if } a\in\df \text{ or } b\in\af,\\
		\rho(a)b & \text{if } a\in\af,\; b\in\df.
	\end{cases}
	\]
		The last equation of \eqref{Leviu} implies $\langle u,[a,b]\rangle=0$, hence $u\in\af$. Let $\A=u^\perp\cap\af$, so that $\af=\mathbb{R}u\oplus\A$. 
		With respect to the decomposition $\h=\df\oplus\af$, we write
	\[
	A=
	\begin{pmatrix}
		A_1 & B_1 \\
		A_2 & B_2
	\end{pmatrix}.
	\]
	
	\medskip
	\noindent\textbf{Step 1: Determination of $A$.}
	
	We analyze the second equation in \eqref{Leviu}. A case-by-case study yields:
	\begin{itemize}
		\item For $a,b\in\df$, the relation is automatically satisfied.
		\item For $a,b\in\A$, we obtain
		\[
		\rho(b)B_1(a)-\rho(a)B_1(b)=0.
		\]
		\item For $a\in\A$, $b=u$, we get
		\[
		(\rho(u)+|u|^2\mathrm{Id})B_1(a)=\rho(a)B_1(u), \qquad B_2(a)=0.
		\]
		Since $\rho(u)+|u|^2\mathrm{Id}$ is invertible and commutes with $\rho(a)$,  $h=(\rho(u)+|u|^2\mathrm{Id})^{-1}B_1(u)\in\df$ satisfies
		\[
		B_1(a)=\rho(a)h.
		\]
		\item For $a\in\A$, $b\in\df$, we obtain
		\[
		A_1\circ\rho(a)=\rho(a)\circ A_1, \qquad A_2\circ\rho(a)=0.
		\]
		\item For $a=u$, $b\in\df$, we obtain
		\[
		[A_1,\rho(u)]=|u|^2 A_1, \qquad A_2\circ(\rho(u)-|u|^2\mathrm{Id})=0.
		\]
		Since $\ad_{\rho(u)}$ has no nonzero real eigenvalue and $\im(\rho(u)-|u|^2\mathrm{Id})=\af$, we deduce $A_1=0$ and $A_2=0$.
	\end{itemize}
	
	From the fifth relation in \eqref{Leviu}, $\langle Au,u\rangle=-\la |u|^2$ and  therefore, in the splitting $\h=\df\oplus\A\oplus\R u$,
	\[
	A=
	\begin{pmatrix}
		0 & \Ru_h & m\\
		0 & 0 & n\\
		0 & 0 & -\lambda
	\end{pmatrix},
	\]
	where $h\in\df$, $n\in\A$, and $m=(\rho(u)+|u|^2\mathrm{Id})h$. Note that $\tr(A)+\la=0$.
	
	\medskip
	\noindent\textbf{Step 2: Determination of $F$.}
	
	In the splitting $\h=\df\oplus\af$, write
	\[ F=
	\begin{pmatrix}
		F_1 & F_2 \\
		- F_2^* & F_3 \\
		\end{pmatrix}. \]
			We now consider the third equation in \eqref{Leviu}. Again, examining cases:
	\begin{itemize}
		\item For $a\in\df$, we obtain $\rho(F_2^*(a))=0$. 
		\item For $a\in\A$, we obtain 
		\[ \left[ \begin{pmatrix}
			F_1 & F_2 \\
			- F_2^* & F_3 \\
		\end{pmatrix},\begin{pmatrix}
		 \rho(a)&0 \\
		0 & 0 \\
		\end{pmatrix}  \right]=\begin{pmatrix}
		 \rho(F_2(a))&0 \\
		0 & 0 \\
		\end{pmatrix}. \]
		This is equivalent to $[F_1,\rho(F_2(a))]=\rho(F_2(a))=0$.
		
		\item For $a=u$,  we obtain 
		\[ \left[ \begin{pmatrix}
			F_1 & F_2 \\
			- F_2^* & F_3 \\
		\end{pmatrix},\begin{pmatrix}
			\rho(u)&0 \\
			0 & 0 \\
		\end{pmatrix}  \right]=\begin{pmatrix}
			\rho(F_2(u))+\rho(n)-\la \rho(u)&0 \\
			0 & 0 \\
		\end{pmatrix}+|u|^2\begin{pmatrix}
		F_1 & F_2 \\
		- F_2^* & F_3 \\
		\end{pmatrix}.\]This is equivalent to
			\[
			\begin{cases} \rho(u)F_2=-|u|^2F_2,\; F_3=0,\\
		[F_1,\rho(u)]=|u|^2F_1+\rho(F_2(u))+\rho(n)-\lambda\rho(u).\end{cases}
		\]But $\rho(u)$ is skew-symmetric and hence this is equivalent to $F_2=0$, $F_3=0$ and
		 \[ [\rho(u),F_1+\frac1{|u|^2}\rho(n)-\frac{\la}{|u|^2}\rho(u)]=-|u|^2\left(F_1+\frac1{|u|^2}\rho(n)-\frac{\la}{|u|^2}\rho(u)\right). \] Since $\ad_{\rho(u)}:\so(\df)\too\so(\df)$ is skew-symmetric and has no non zero eigenvalue this relation has a unique solution $F_1=\frac1{|u|^2}(-\rho(n)+\la\rho(u))$.
	\end{itemize}Finally, 
	\[ F=
	\begin{pmatrix}
		\frac1{|u|^2}(-\rho(n)+\la\rho(u)) & 0 \\
		0 & 0
	\end{pmatrix}. \]
	
	\medskip
	\noindent\textbf{Step 3: Determination of $w$.}
	
	Finally, we consider the fourth equation in \eqref{Leviu}:$$2\langle a,u\rangle w+A^2a-\la Aa+a\star w= [F,A]a.$$
	 Write
	\[
	w=w_1+w_2+\nu u, \qquad w_1\in\df,\; w_2\in\A.
	\]
	   For $a\in\df$ this relation holds. For $a\in\A$ or $a=u$, we get \[\begin{cases} -\la a\star h+a\star w_1=-\frac1{|u|^2}\left(\rho(n)\rho(a)-\la\rho(a)\rho(u)\right) h,\\ 2|u|^2w+n\star h-2\la m-2\la n+2\la^2u +u\star w_1=-\frac1{|u|^2}(\rho(n)-\la\rho(u))(m). \end{cases} \] But $m=\rho(u)h+|u|^2h$ and hence \[\begin{cases} 2|u|^2w_1+2\rho(n)h-2\la\rho(u)h-2\la|u|^2h+\rho(u)w_1+\frac1{|u|^2}\rho(n)\rho(u)h -\frac{\la}{|u|^2}\rho(u)^2h-\la\rho(u)h=0,\\ \rho(a)((w_1+\frac1{|u|^2}(\rho(n)-\la\rho(u))h)=\la \rho(a)(h),\\
		2|u|^2 w_2-2\la n=0,\\ 2|u|^2\nu +2\la^2=0. \end{cases} \]
	From the second relation, one can write \[ w_1=\la h-\frac1{|u|^2}\rho(n)(h)+\frac{\la}{|u|^2}\rho(u)h+X\;\qquad\mbox{with}\;\rho(a)X=0. \] If we replace in the first relation, we get \begin{align*} 0&=2\la|u|^2 h-2\rho(n)(h)+2\la\rho(u)h+2|u|^2X +2\rho(n)h-2\la\rho(u)h-2\la|u|^2h\\ &+\la\rho(u) h-\frac1{|u|^2}\rho(u)\rho(n)(h)+\frac{\la}{|u|^2}\rho(u)^2h+\rho(u)X +\frac1{|u|^2}\rho(n)\rho(u)h -\frac{\la}{|u|^2}\rho(u)^2h-\la\rho(u)h\\ &=\rho(u)X+2|u|^2X. \end{align*}The relation $\rho(u)X=-2|u|^2X$ implies $X=0$. Then
	\[ w_1=\la h-\frac1{|u|^2}\rho(n)(h)+\frac{\la}{|u|^2}\rho(u)h,\; w_2=\frac{\la n}{|u|^2},\; w_3=-\frac{\la^2}{|u|^2}u. \]
	This completes the proof of the first part of the proposition. For the verification that the Levi-Civita product on the extension $\G$ induces a Novikov algebra structure  see Section \ref{section9} Subsection \ref{subsection93}. 
	
	Now, we replace the solutions $(E,F,A,u,v,w,\al,\la)$ obtained in this case in \eqref{bracket} to get the Lie bracket on the extension $\G=\R e\oplus\df\oplus\A\oplus\R u\oplus\R f$:
	\begin{equation*}\begin{cases}
			[f,e]=\la e,\;[u,e]=-|u|^2 e,\;\\ [a,b]=\rho(a)b-\langle \rho(a)b,h\rangle e,\; [u,b]=\rho(u)b+\langle m,b\rangle e, [a,u]=-\langle n,a\rangle e,\\ 
			[f,b]=-\frac1{|u|^2}(\rho(n)-\la\rho(u))(b)+\langle b,w_1\rangle_\h e,\\
			[f,a]=\rho(a)h+\frac{\la}{|u|^2}\langle a,n\rangle_\h e,\\
			[f,u]=\rho(u)h+|u|^2h+n-\la u-{\la^2}e+|u|^2 f,\\
		\end{cases}
	\end{equation*}
	If we put $u$ replace by  $\frac{u}{|u|^2}$ and $n$ by $\frac{n}{|u|^2}$, we get
	the Lie bracket in Table \ref{2}.
	\end{proof}

 \subsection{ Solutions of \eqref{curvature} in the case ($\mathrm{U2}$)}\label{subsection42}
 
 \begin{pr}\label{u0} Let $(\h,\star,\prs)$ be a Euclidean algebra together with the data $(E,F,A,u,v,w,\al,\la)$ satisfying \eqref{curvature} such the extension $\G$ is unimodular $E=0$ and $u=0$. Then:\begin{enumerate}
 	
 	\item[$(i)$]  $v=0$, $\al=\la=\tr(A)=0$, \item[$(ii)$]  $\h$ splits orthogonally $\h=\df\oplus\af$,
 	\item[$(iii)$]  there exists $\rho:\af\too\so(\df)$ a representation of abelian Lie algebra, $h\in\df$ and $w_2\in\af$  such that  $\bigcap_{a\in\af}\ker\rho(a)=\{0\}$ and, for all $a\in\af$, 
 	\[ \begin{cases}\Lu_d=0,\; d\in\df,\; (\Lu_{a})_{|\af}=0,\;(\Lu_{a})_{|\df}=\rho(a),\; a\in\af,\\
 		A=\begin{pmatrix}
 			A_1&\Ru_h\\
 			0&A_2
 		\end{pmatrix},\;F=\begin{pmatrix}
 			F_1&0\\
 			0&F_2
 		\end{pmatrix},\; w=F_1(h)-A_1(h)+w_2, \; F_1\in\so(\df),\;F_2\in\so(\af)\\
 		[F_1,\rho(a)]=[A_1,\rho(a)]=0,\; [F_1,A_1]=A_1^2, [F_2,A_2]=A_2^2,\\
 		\im(F_2+A_2)\subset\ker\rho,\; \Ru_h(a)=\rho(a)h.
 	\end{cases} \]
 	\end{enumerate}

 	Moreover,  the extension $\G=\R e\oplus\df\oplus\af\oplus\R f$ endowed with the Levi-Civita product is a Novikov algebra if and only if 
 	\[[F_1,\rho(a)]=0,\; A_1=0,\; A_2^2=0,\; \im F_2\subset\ker\rho,\;\im A_2\subset\ker\rho,\; A_2F_2=F_2A_2=0,\;\rho(w_2)h=0,\;A_2w_2=0.\] 
 	The structure of flat Lorentzian Lie algebra on $\G$ is given in Table \ref{3}.

 \end{pr}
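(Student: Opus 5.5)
In case (U2) we already know $u=0$, $E=0$, $v=0$, $\al=0$ and $\la+\tr(A)=0$. The plan is to substitute these into \eqref{curvature}, read off the structure of $(\h,\star)$ from Theorem \ref{milnor}, and then find $A$, $F$, $w$ in that order using Lemmas \ref{AM} and \ref{LF}.

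\textbf{Reduction.} With these values, \eqref{curvature} becomes:
\[
\begin{cases}
\ass_\star(a,b,c)=\ass_\star(b,a,c),\\
A([a,b]_\star)=a\star Ab-b\star Aa,\\
[F,\Lu_a]=\Lu_{Fa+Aa},\\
[F,A]a=A^2a-\la Aa+a\star w,\\
Fv=0.
\end{cases}
\]
The remaining equations become trivial. The first equation says that $(\h,[\cdot,\cdot]_\star,\prs_\h)$ is a flat Euclidean Lie algebra. Theorem \ref{milnor} then gives the splitting $\h=\df\oplus\af$, the representation $\rho$, and the form of $\Lu_a$ stated in the proposition. The second equation is exactly hypothesis \eqref{A}, so Lemma \ref{AM} gives the stated block form of $A$, with $A(\df)\subset\df$ and $[A_1,\rho(a)]=0$.

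\textbf{Determining $F$.} Write $F$ in blocks with off-diagonal part $F_{12}$. Apply the third equation with $a=d\in\df$; since $\Lu_d=0$ and $Ad\in\df$, this forces $\rho$ of the $\af$-component of $Fd$ to vanish, i.e.\ $\rho(F_{12}^*d)=0$. Apply it with $a\in\af$. The off-diagonal block of $[F,\Lu_a]$ gives $\rho(a)F_{12}=0$ for every $a$, and since $\bigcap_a\ker\rho(a)=0$ this yields $F_{12}=0$. The diagonal $\df$-block gives
\[
[F_1,\rho(a)]=\rho\bigl((F_2+A_2)a\bigr).
\]
The left side lies in $[\,\cdot\,,\rho(\af)]$ and the right side in $\rho(\af)$. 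By Proposition \ref{solvable}(2), together with invariance under $\ad_{F_1}$ (the map $a\mapsto[F_1,\rho(a)]$ preserves the abelian algebra $\rho(\af)$), both sides vanish. This gives $[F_1,\rho(a)]=0$ and $\im(F_2+A_2)\subset\ker\rho$.

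\textbf{The fourth equation and $w$.} This is where I expect the main obstacle, namely showing $\la=0$. On $\df$ the fourth equation reads $[F_1,A_1]=A_1^2-\la A_1$. On $\af$ the $\af$-component reads $[F_2,A_2]=A_2^2-\la A_2$, since $a\star w\in\df$. If $\la\neq0$, Lemma \ref{LF} applies to both blocks, so $A_1$ and $A_2$ are diagonalizable with spectrum in $\{0,\la\}$. Then $\tr(A)=k\la$ with $k\ge0$, which contradicts $\tr A=-\la$ unless $\la=0$. Hence $\la=\tr A=0$, and the two relations become $[F_1,A_1]=A_1^2$ and $[F_2,A_2]=A_2^2$. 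The $\df$-component on $\af$ gives
\[
\rho(a)w_1=[F,A]a-A^2a \quad\text{restricted to } \df,
\]
which computes to $\rho(a)(F_1h-A_1h)$ using $[F_1,\rho(a)]=[A_1,\rho(a)]=0$ and $\rho(A_2a)=-\rho(F_2a)$. Because $\rho$ has trivial common kernel, this gives $w_1=F_1h-A_1h$, while $w_2$ is free.

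\textbf{Novikov case.} For the Novikov characterization, substitute all of the above into Proposition \ref{NENO} and simplify term by term. For example, $\Lu_{Aa}=-\langle a,u\rangle F=0$ forces $\im A\subset\ker\rho\oplus\df$, and this together with $A(a\star b)=0$ gives $A_1=0$. The remaining conditions in the statement come out the same way.
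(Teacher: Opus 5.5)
Your derivation of (i)--(iii) is correct and follows the paper's proof. Both use Theorem~\ref{milnor} for $(\h,\star)$, Lemma~\ref{AM} for the block form of $A$, Proposition~\ref{solvable}(2) to get $[F_1,\rho(a)]=0$ and $\im(F_2+A_2)\subset\ker\rho$, and Lemma~\ref{LF} on both diagonal blocks plus the trace to force $\la=0$. Both then use the trivial common kernel of $\rho$ to obtain $w_1=F_1(h)-A_1(h)$. The only variation is that you kill the off-diagonal block of $F$ directly from $\rho(a)F_{12}=0$ and apply Proposition~\ref{solvable} inside $\so(\df)$. The paper instead applies it to $\{\Lu_a\}\subset\so(\h)$ and then uses $\df=\h\star\h$; the two are equivalent.

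The gap is in your last sentence, ``the remaining conditions in the statement come out the same way''. They do not. With $u=v=0$, $E=0$ and $\al=\la=0$, the line $\Lu_w=-\la F$ of \eqref{NE=0} becomes $\Lu_w=0$. Since $\Lu_d=0$ for $d\in\df$, this is $\rho(w_2)=0$, which is strictly stronger than the stated $\rho(w_2)h=0$. The stated condition, together with $A_2w_2=0$, is only what $Aw=0$ gives.

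So the ``if'' direction of the Novikov criterion, as written, cannot be proved. Here is a counterexample. Take $\df=\R^2$ with orthonormal basis $(e_1,e_2)$, $\af=\R a_0$ with $|a_0|=1$, $\rho(a_0)e_1=e_2$, $\rho(a_0)e_2=-e_1$, and $A=F=0$, $h=0$, $w=w_2=a_0$. Every relation in (iii) holds, every listed Novikov condition holds, and \eqref{curvature} holds, since $\Ru_{a_0}=0$. However, $f\cdot f=-a_0$ gives $(f\cdot f)\cdot e_1=-e_2\neq 0=(f\cdot e_1)\cdot f$, so $\G$ is not Novikov.

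A correct term-by-term check gives the stated list with $\rho(w_2)h=0$ replaced by $\rho(w_2)=0$. The paper's own appendix in fact derives $\Lu_w=0\iff\rho(w_2)=0$ and then records the weaker condition.

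A smaller point about your example: $A_1=0$ already follows from $A(a\star b)=0$ alone, because $\df=\h\star\h$. The condition $\Lu_{Aa}=0$ is what yields $\im A_2\subset\ker\rho$.
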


\begin{proof}
	We have seen that if $\G$ is unimodular, $u=0$ and $E=0$, then $\al=0$, $v=0$ and $\tr(A)+\la=0$. In this case, system \eqref{curvature} reduces to
	\[
	\begin{cases}
		\ass_\star(a,b,c)-\ass_\star(b,a,c)=0,\\
		A([a, b])+b\star Aa-a\star Ab=0,\\
		[F,\Lu_a]=\Lu_{F(a)+A(a)},\\
		A^2a-\la Aa+a\star w=[F,A]a,\quad a,b,c\in\h.
	\end{cases}
	\]
	
	The first equation is equivalent to the fact that $(\h,\br_\star,\prs)$ is a flat Euclidean Lie algebra. By Theorem \ref{milnor}, we have an orthogonal decomposition $\h=\df\oplus\af$, where $\af$ is abelian, and there exists a representation
	\[
	\rho : \af \to \so(\df)
	\]
	such that $\bigcap_{a \in \af} \ker \rho(a)=\{0\}$, and the product $\star$ is given by
	\[
	a\star b=
	\begin{cases}
		0 & \text{if } a\in\df \text{ or } b\in\af,\\
		\rho(a)b & \text{if } a\in\af,\; b\in\df.
	\end{cases}
	\]
	
	Since the Lie subalgebra $\{\Lu_a \mid a\in\h\}$ is abelian, the third relation  shows that it is invariant under $\ad_F$. Hence, by virtue of Proposition \ref{solvable}, for any $a\in\h$,
	\[
	\Lu_{F(a)+A(a)}=[F,\Lu_a]=0.
	\]
	Using the fact that $\df=\h\star\h$ and $F$ is skew-symmetric, we deduce that $F(\df)\subset\df$ and $F(\af)\subset\af$.
	
	We now analyze the fourth equation
	\[
	A^2a-\la Aa+a\star w=[F,A]a,
	\]
	which can be rewritten as
	\[
	[F,A]=A^2-\la A+\Ru_w.
	\]
	Decompose $w=w_1+w_2$ with $w_1\in\df$ and $w_2\in\af$. By Lemma \ref{AM}, there exists $h\in\df$ such that, with respect to the decomposition $\h=\df\oplus\af$, the operators take the form
	\[
	A=\begin{pmatrix}
		A_1 & \Ru_h\\
		0 & A_2
	\end{pmatrix},\quad
	F=\begin{pmatrix}
		F_1 & 0\\
		0 & F_2
	\end{pmatrix},\quad
	\Lu_a=\begin{pmatrix}
		\rho(a) & 0\\
		0 & 0
	\end{pmatrix},\quad
	\Ru_w=\begin{pmatrix}
		0 & \Ru_{w_1}\\
		0 & 0
	\end{pmatrix}.
	\]
	Therefore,
	\[
	[F,A]=\begin{pmatrix}
		[F_1,A_1] & F_1\Ru_h-\Ru_hF_2\\
		0 & [F_2,A_2]
	\end{pmatrix},\quad
	A^2=\begin{pmatrix}
		A_1^2 & A_1\Ru_h+\Ru_hA_2\\
		0 & A_2^2
	\end{pmatrix}.
	\]
	Hence the fourth equation is equivalent to
	\[
	\begin{cases}
		[F_1,\rho(a)]=0,\quad [A_1,\rho(a)]=0,\\
		[F_1,A_1]=A_1^2-\la A_1,\quad [F_2,A_2]=A_2^2-\la A_2,\\
		F_1\Ru_h-\Ru_hF_2=A_1\Ru_h+\Ru_hA_2-\la \Ru_h+\Ru_{w_1}.
	\end{cases}
	\]
	
	Assume that $\la\neq 0$. Then, by Lemma \ref{LF}, we obtain $A_1^2-\la A_1=0$ and $A_2^2-\la A_2=0$. It follows that
	\[
	\tr(A)=\tr(A_1)+\tr(A_2)=n\la,\quad n\geq 2,
	\]
	which contradicts the relation $\tr(A)+\la=0$. Therefore, $\la=0$.
	
	Finally, using $[F_1,\rho(a)]=[A_1,\rho(a)]=0$, the relation
	\[
	F_1(a\star h)-F_2(a)\star h=A_1(a\star h)+A_2(a)\star h+a\star w_1
	\]
	is equivalent to
	\[
	\rho(a)\big(F_1(h)-A_1(h)-w_1\big)=\rho\big(F_2(a)+A_2(a)\big).
	\]
	Since $\Lu_{F(a)+A(a)}=0$, we have $\rho(F_2(a)+A_2(a))=0$, hence
	\[
	w_1=F_1(h)-A_1(h).
	\]
	This concludes the proof of the first part. The verification of the Novikov property is in Section \ref{section9}, Subsection \ref{subsection93}. Finally, by using \eqref{bracket} and the solutions obtained we get the structure of flat Lorentzian Lie algebra listed in Table \ref{3}.
\end{proof}

\subsection{Solutions of \eqref{curvature} in the case ($\mathrm{U3}$).} 

 Before proceeding, we draw the reader's attention to a somewhat paradoxical feature of Proposition \ref{E}: the solutions it produces have a remarkably clean and structured form - the operators $E$, $F$, $A$ are block diagonal, the product $\star$ is determined by two commuting representations $\rho_1$ and $\rho_2$, and the vector $w$ is expressed simply as $F_1(h) + F_3(v_0)$ - yet the proof required to establish these expressions is by far the most intricate in the paper. In particular, the reduction to the case $v = 0$ performed in Step 2, the nilpotency argument for $C = B + R^\circ_a$ in Step 3, and the vanishing of the left multiplication operators $\oL_x$ for $x \in \mathfrak{b}$ in Step 5, each require a careful and elaborate analysis that goes well beyond what the simplicity of the final result might suggest.

\begin{pr}\label{E} Let $(\h,\star,\prs)$ be a Euclidean algebra and the data $(E,F,A,u,v,w,\al,\la)$ satisfying \eqref{curvature} such the extension $\G$ is unimodular and $E\not=0$. Then:\begin{enumerate}
	
	\item $u=0$, $\al=0$, $\tr(A)=\la=0$,\item  $\h$ splits orthogonally $\h=\df\oplus\af\oplus\mathfrak{b}$, \item  there exists $\rho_1:\af\too\so(\df)$, $\rho_2:\af\too\so(\mathfrak{b})$,  $h\in\df$ and $v_0\in\mathfrak{b}$ such that  $\bigcap_{a\in\af}\ker\rho_1(a)=\{0\}$,  $v=E(v_0)$ and for any $a\in\af$, $d\in\df$, $b\in \mathfrak{b}$,
\[ \begin{cases}(\Lu_a)_{|\df}=\rho_1(a),\; (\Lu_a)_{|\af}=0,\;(\Lu_a)_{|\mathfrak{b}}=\rho_2(a),
	\; \Lu_d=\langle d,h\rangle E,\;\Lu_b=\langle b,v_0\rangle E,\\
\;	A(d)=\langle h,d\rangle E(v_0),\;A(a)=\rho_1(a)h+\rho_2(a)v_0, \esp A(b)=\langle b,v_0\rangle E(v_0),\; w=F_1(h)+F_3(v_0)\\
	\ker E=\df\oplus\af,\; E_{|\mathfrak{b}}=E_1,\; F_{|\df}=F_1,\; F_{|\af}=F_2,\;  F_{|\mathfrak{b}}=F_3,\; 
	 \; F_1\in\so(\df),\; F_2\in\so(\af),\;F_3,E_1\in\so(\mathfrak{b}),\; \det(E_1)\not=0,\\
	[F_1,\rho_1(a)]=\rho_1(F_2(a))=0,\; [F_3,\rho_2(a)]=\rho_2(F_2(a))=0\esp[F_3,E_1]=0,\; [E_1,\rho_2(a)]=0.
	\end{cases} \]	
\end{enumerate}

	Moreover,  the extension $\G=\R e\oplus\h\oplus\R f$ endowed with the Levi-Civita product is a Novikov algebra and  the structure of flat Lorentzian Lie algebra on $\G$ is given in Table \ref{4}.
		\end{pr}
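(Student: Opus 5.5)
We already know from the case analysis preceding (U3) that $u=0$, $\al=0$, $\la=0$ (hence $\tr(A)=0$) and $v=E(v_0)$. The plan is to solve \eqref{curvature} in the order that the paper's remark suggests. First determine $\ker E$ and the structure of $\star$. Then reduce to $v=0$. Then determine $A$, and finally $F$ and $w$.

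\textbf{Step 1: first constraints.} With $u=\al=\la=0$, the third equation of \eqref{curvature} reads $[E,\Lu_a]=\Lu_{Ea}+\langle a,v\rangle E$. The seventh reads $[F,E]=\Lu_v$. Combining them gives $[F+\Lu_{v_0},E]=0$. I would set $\mathfrak{b}=\im E=(\ker E)^\perp$, so that $E_1=E_{|\mathfrak{b}}$ is invertible and $v_0\in\mathfrak{b}$.

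\textbf{Step 2: reduction to $v=0$.} I would replace $\star$ by a modified product such as $a\bar\star b=a\star b-\langle a,v_0\rangle Eb$. This is the natural guess from $\Lu_b=\langle b,v_0\rangle E$. The data $A$, $F$, $w$ are modified correspondingly, and I would check that the new data satisfy \eqref{curvature} with $v=0$. This amounts to a change of null frame $f\mapsto f+v_0-\tfrac12|v_0|^2e$. Indeed, $e\cdot e=0$ and $e\cdot a=Ea+\langle a,v\rangle e$, and such a change of basis absorbs $v$.

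\textbf{Step 3: the case $v=0$.} Now $[E,\Lu_a]=\Lu_{Ea}$ and $[E,\Lu_v]=0$. The first equation becomes $\ass_\star(a,b,c)-\ass_\star(b,a,c)=\langle (A^*-A)a,b\rangle Ec$, and the fourth becomes $[E,A]=-\Ru_v=0$, so $A$ commutes with $E$. I would decompose $\Lu_a$ along $\ker E\oplus\mathfrak{b}$. Eigenvalue arguments for $\ad_E$ on $\so(\h)$, which has no real nonzero eigenvalues, together with Proposition \ref{solvable}, should give the following:
\begin{itemize}
\item $\Lu_{Ea}$ vanishes modulo multiples of $E$;
\item the left multiplications restricted to $\ker E$ form a flat Euclidean Lie algebra;
\item by Theorem \ref{milnor}, $\ker E=\df\oplus\af$.
\end{itemize}
The step I expect to be hardest is showing that $\oL_x$ vanishes for $x\in\mathfrak{b}$, i.e. that $\Lu_x$ is a multiple of $E$. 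For this I would use a nilpotency argument on $C=B+\Ru^\circ_a$, where $B$ is the off-diagonal block of $A$. From the second equation I would derive relations of the form $[\,\cdot\,,C]=C^2$, in the spirit of Lemma \ref{LF} with $\la=0$. These should force $C$ to be nilpotent, and skew-symmetry should then kill it. Once this is done, the left multiplications of $\af$ on $\mathfrak{b}$ define $\rho_2$, which commutes with $E_1$.

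\textbf{Step 4: $A$, $F$ and $w$.} The second equation of \eqref{curvature} has the form of \eqref{A} up to $E$-terms, so Lemma \ref{AM} and Lemma \ref{LB} will yield $A(a)=\rho_1(a)h+\rho_2(a)v_0$ and the rank-one pieces. Here $h$ comes from Lemma \ref{LB}, and the pieces $\langle h,d\rangle E(v_0)$ come from undoing Step 2. Next, the fifth equation, $[F,\Lu_a]=\Lu_{Fa+Aa}$, together with Proposition \ref{solvable}(2), gives that $F$ is block diagonal, with $[F_1,\rho_1(a)]=0$, $[F_3,\rho_2(a)]=0$ and $\rho_i(F_2a)=0$. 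The relation $[F+\Lu_{v_0},E]=0$ gives $[F_3,E_1]=0$. The sixth equation then determines $w_1$ and $w$ as in Proposition \ref{u0}, giving $w=F_1(h)+F_3(v_0)$. Novikovity is checked against \eqref{NE=0}, and substituting the data into \eqref{bracket} yields Table \ref{4}.
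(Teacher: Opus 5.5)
\textbf{There is a genuine gap at the step you yourself flag as hardest.}

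Your Steps 1 and 2 are sound. Reading the reduction to $v=0$ as the change of adapted null frame $f\mapsto f+v_0-\tfrac12|v_0|^2e$ is correct. In the new frame the product on $\h$ is the paper's $\circ$, the new $A$ is $B=A-\Ru_{v_0}$, and $u=v=0$. This explains at once why \eqref{LeviE0} is just \eqref{curvature} with $u=v=\al=\la=0$; the paper checks that equivalence by hand in Subsection \ref{subsection95}.

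The gap is proving $\oL_x=0$ for $x\in\mathfrak{b}$ and $B_{|\mathfrak b}=0$. Your mechanism is nilpotency of $C=B+\Ru^\circ_a$ followed by ``skew-symmetry kills it''. This cannot work, because $C$ is neither skew-symmetric nor symmetric: $B$ is $A-\Ru_{v_0}$, not a block of $A$, and right multiplications are not skew. In fact $C$ is in general a \emph{nonzero} nilpotent in the final solution. For $a=0$, $C_{|\h_0}=B_{|\h_0}$ contains the block $\Ru^\circ_h:\af\to\df$, which is nonzero as soon as $h\neq0$. So nilpotency alone forces nothing, and you need an argument specific to $\mathfrak b$.

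The paper supplies that argument as follows.
\begin{enumerate}
\item Let $k\in\h_0$ represent the linear form $a\mapsto\langle\oL_a,E\rangle_{\so}$ on $\h_0=\ker E$.
\item Since $\langle\oL_{Ea},E\rangle_{\so}=\langle[E,\oL_a],E\rangle_{\so}=0$, pairing the first equation of \eqref{LeviE0} for $x,y\in\mathfrak b$ with $E$ gives \eqref{Important_improved}. This uses the specific operator $C=\frac{1}{|E|^2}\Ru^\circ_k-B$, which is minus the Step 3 operator at $a=-k/|E|^2$ and hence nilpotent.
\item Taking $x,y\in\ker C$ gives $\oL_{\ker C}=0$. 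Then $\ker C\subset\ker C^*=(\im C)^\perp$, so $\mathfrak b=\ker C\oplus\im C$.
\item A nilpotent operator with this property vanishes. Hence $\oL_{\mathfrak b}=0$ and $Bx=\frac{1}{|E|^2}\oL_xk=0$ for $x\in\mathfrak b$.
\end{enumerate}
Similarly, Lemma \ref{AM} only puts $B_{|\h_0}$ in upper triangular form. You still have to kill the diagonal blocks; the paper shows they are symmetric (because $\{\oL_a\}_{a\in\h_0}+\R E$ is abelian) and nilpotent.

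\textbf{Your first bullet does not fill the gap as stated, but it can be made rigorous.} You claim ``$\Lu_{Ea}$ vanishes modulo multiples of $E$'' from eigenvalue arguments and Proposition \ref{solvable}. The spectral properties of $\ad_E$ only place $\oL_{\mathfrak b}$ in the nonzero-eigenvalue part of $\ad_E$, so it is orthogonal to $E$ but need not vanish. Proposition \ref{solvable} needs a solvable subalgebra of $\so(\h)$ containing $E$ and all $\Lu_a$, $a\in\h$. The equations do not obviously provide one; the paper only obtains it for $a\in\ker E$.

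Made rigorous, this is a much shorter route than the paper's Steps 3--5:
\begin{enumerate}
\item $\G$ is solvable, as used in the proof of Proposition \ref{ee}, and every $\Lu_u$ preserves $\R e$ and $e^\perp$. So the induced action on $e^\perp/\R e\cong\h$ is a homomorphism $\G\to\so(\h)$ with image $\R E+\R F+\{\Lu_a\}_{a\in\h}$.
\item By Proposition \ref{solvable} this image is abelian. The third equation of \eqref{curvature} then gives $\Lu_{Ea}=-\langle a,v\rangle E$ directly, i.e. $\Lu_x=\langle x,v_0\rangle E$ on $\mathfrak b$.
\item In the reduced frame this says $\oL_{\mathfrak b}=0$, so $[x,y]_\circ=0$ on $\mathfrak b$ and $B_{|\mathfrak b}$ is symmetric. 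Also $\Ru^\circ_{\overline w}$ vanishes on $\mathfrak b$, so $[\overline F,B]=B^2$ there.
\item Hence $\tr\bigl((B_{|\mathfrak b})^2\bigr)=0$, and symmetry gives $B_{|\mathfrak b}=0$.
\item The same trace argument applied to the diagonal blocks of $B_{|\h_0}$ shows they vanish.
\end{enumerate}
As written, however, your proposal contains neither this argument nor the paper's, so its central step is unsupported.
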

		
		\begin{proof} 
			
			 We have seen in the case analysis at the beginning of this section (the case (U3)) that $u=0$, $v=E(v_0)$ with $v_0\in\im E$, $\la=\al=0$, $\tr(A)=0$ and $[F+\Lu_{v_0},E]=0$. So \eqref{curvature} becomes
\begin{equation}\label{LeviE} \begin{cases}
	\Lu_{[a,b]}-[\Lu_a,\Lu_b]=(\langle Ab,a\rangle_\h-\langle Aa,b\rangle_\h)  E,\\
	A([a, b])+b\star Aa-a\star Ab+\left(\langle Aa,b\rangle_\h-\langle Ab,a\rangle_\h\right) v
	=0,\\
	[E,\Lu_a]=\Lu_{E(a)}+\langle v,a\rangle_\h E,\\
	[E,A]=\langle \bullet ,v\rangle_\h v+\Ru_v,\\
	[F,\Lu_a]=\Lu_{F(a)+A(a)}+\langle a,w\rangle_\h E,\\
	[F,A]=A^2+\Ru_{w}+\langle \bullet,w\rangle_\h v,\\
	[F,E]=\Lu_v,\\
	Av-Fv+Ew=0,\\ 
	\tr(\Ru_a)=-\langle a,v\rangle,\; a,b,c\in\h.
\end{cases} \end{equation}
Put $\h_0=\ker E$ and $\mathfrak{b}=\im E.$

The proof is lengthy, and some technical verifications are deferred to the appendix Section \ref{section9}. We proceed in six steps.

\medskip
\noindent\textbf{Step 1: $\h_0$ is a flat Euclidean Lie algebra.}\\

For any $a,b\in\h_0$, the third equation in \eqref{LeviE} yields $E(a\star b)=0$, showing that $\h_0$ is a subalgebra of $(\h,\star)$. Moreover, for any $a,b,c\in\h_0$, the first equation implies
\[
\ass(a,b,c)-\ass(b,a,c)=0,
\]
and thus $(\h_0,\br_\star)$ is a flat Euclidean Lie algebra. Consequently, by virtue of Theorem \ref{milnor}, there exists an orthogonal decomposition $\h_0=\df\oplus\af$, where $\af$ and $\df$ are abelian, together with a representation
\[
\rho_1:\af\to\so(\df)
\]
such that $\bigcap_{a\in\af}\ker\rho_1(a)=\{0\}$ and the restriction of $\star$ to $\h_0$ is given by \eqref{Levi-Milnor}.

\medskip
\noindent\textbf{Step 2: Reduction of \eqref{LeviE} to the case $v=0$.}\\

We perform a change of data by setting
\begin{equation}\label{change}
\oL_a=\Lu_a-\langle a,v_0\rangle E,\quad a\in\h,\qquad 
\overline{F}=F+\Lu_{v_0},\qquad 
B=A-\Ru_{v_0},\qquad 
\overline{w}=w-F(v_0)+Bv_0.
\end{equation}
Then $\oL$ defines the left multiplication of a new product $\circ$ on $\h$, given by
\[
\oL_a(b)=a\circ b=a\star b-\langle a,v_0\rangle E(b),\qquad a,b\in\h.
\] We denote by ${\Ru}_a^\circ$ the right multiplication operator associated to $\circ$.
With this change of data in mind,  it is a long verification to show that \eqref{LeviE} is equivalent to
\begin{equation} \label{LeviE0}\begin{cases}
	\oL_{[a,b]}-[\oL_a,\oL_b]=(\langle Bb,a\rangle-\langle Ba,b\rangle)  E,\\
	B([a, b]_\circ)+b\circ Ba-a\circ Bb
	=0,\\
	[E,\oL_a]=\oL_{E(a)},\,
	[E,B]=0,\\
	[\overline{F},\oL_a]=\oL_{\overline{F}(a)+B(a)}+\langle a,\overline{w}\rangle E,\\
	[\overline{F},B]=B^2+{\Ru}_{\overline{w}}^\circ,\;
	[\overline{F},E]=0,\\
	E\overline{w}=0,\; 
	\tr({\Ru}_a^\circ)=0.
\end{cases} \end{equation}
The verification is deferred to the appendix Section \eqref{section9} Subsection \ref{subsection95}. Now we proceed to solve \eqref{LeviE0}.

\medskip

\medskip
\noindent\textbf{Step 3: For any $a\in\h_0$, the operator $B+\Ru_a^\circ$ is nilpotent.}\\

Let $a\in\h_0$, i.e., $E(a)=0$. We start by establishing the relations
\begin{equation}\label{first}
[\overline{F},\Ru_a^\circ]=\Ru_{\overline{F}(a)}^\circ+\Ru_a^\circ\circ B,\qquad
[\Ru_a^\circ,\oL_a]=(\Ru_a^\circ)^2-\Ru_{\Ru_a^\circ(a)}^\circ\esp [B,\oL_a]=B\circ\Ru_a^\circ-\Ru_{Ba}^\circ.
\end{equation}
Since \(E(a)=0\), the first relation in \eqref{LeviE0} yields, for any \(b\in\h\),
\[
\ass_{\circ}(a,b,a)-\ass_{\circ}(b,a,a)
=(a\circ b)\circ a - a\circ(b\circ a)
-(b\circ a)\circ a + b\circ(a\circ a)=0.
\]
This identity gives precisely the second relation in \eqref{first}. Moreover, the third relation in \eqref{first} is simply a reformulation of the second relation in \eqref{LeviE0}.

Next, let \(b\in\h\). Using the fourth relation in \eqref{LeviE0}, we obtain
\[
\overline{F}(b\circ a) - b\circ \overline{F}(a)
= (F(b)+B(b))\circ a,
\]
which is exactly the first relation in \eqref{first}.

Set $C=B+\Ru_a^\circ$ and $G=\overline{F}-\oL_a$. Having in mind \eqref{first}, we compute
\begin{align*}
	[G,C]
	&=[\overline{F},B]+[\overline{F},\Ru_a^\circ]-[\oL_a,B]-[\oL_a,\Ru_a^\circ]\\
	&=B^2+\Ru_{\overline{w}}^\circ+\Ru_{\overline{F}(a)}^\circ+\Ru_a^\circ\circ B
	+B\circ\Ru_a^\circ-\Ru_{Ba}^\circ+(\Ru_a^\circ)^2-\Ru_{\Ru_a^\circ(a)}^\circ\\
	&=C^2+\Ru_{\overline{w}+\overline{F}(a)-C(a)}^\circ
	= C^2+\Ru_{Q(a)}^\circ,
\end{align*}
where $Q(a)=\overline{w}+\overline{F}(a)-C(a)$. Moreover,
\[
[C,\oL_a]=C\circ\Ru_a^\circ-\Ru_{Ca}^\circ.
\]
From these two relations, an induction on $n$ shows that
\[
[G,C^n]=nC^{n+1}+\sum_{k=0}^{n-1}C^{n-k-1}\circ\Ru_{Q(a)}^\circ\circ C^k,
\]
and
\[
[C^n,\oL_a]=\sum_{k=1}^n C^k\circ\Ru_a^\circ\circ C^{n-k}
-\sum_{k=0}^{n-1}C^{n-1-k}\circ\Ru_{Ca}^\circ\circ C^k.
\]
Taking traces in the second identity, we obtain
\[
\tr(C^n\circ\Ru_a^\circ)=\tr(C^{n-1}\circ\Ru_a^\circ)=\tr(\Ru_{Ca}^\circ)=0.
\]
Using the first identity, it follows that
\[
\tr(C^{n+1})=-\tr(C^{n-1}\circ\Ru_a^\circ)=0.
\]
Therefore, $\tr(C^k)=0$ for all $k\geq 1$, and hence $C$ is nilpotent.

\medskip
\noindent\textbf{Step 4: Determination of $B_{|\h_0}$ and $\oL_a$ for $a\in\h_0$.}\\

Note first that the restriction of $\circ$ to $\h_0$ coincides with $\star$.

Since $B$ commutes with $E$, it leaves $\h_0=\ker E$ invariant. By Lemma \ref{AM}, there exists $h\in\df$ such that, with respect to the decomposition $\h_0=\df\oplus\af$,
\[
B_{|\h_0}=\begin{pmatrix}
	B_1 & \Ru_h^\circ\\
	0 & B_2
\end{pmatrix},
\quad \text{with } [B_1,\rho_1(a)]=0 \text{ for all } a\in\af.
\]
Since $B$ is nilpotent, both $B_1$ and $B_2$ are nilpotent.

For any $a,b\in\h_0$, using the first and third relations in \eqref{LeviE0}, we obtain
\[
[\oL_a,\oL_b]=\oL_{[a,b]_\circ}-\big(\langle Bb,a\rangle-\langle Ba,b\rangle\big)E,
\qquad [E,\oL_a]=0.
\]
It follows that the Lie subalgebra $\{\oL_a \mid a\in\ker E\}\oplus \mathbb{R}E \subset \so(\h)$ is solvable. By Proposition \ref{solvable}, it is therefore abelian, and hence, for all $a,b\in\h_0$,
\[
\oL_{[a,b]_\circ}=\big(\langle Bb,a\rangle-\langle Ba,b\rangle\big)E,
\qquad [\oL_a,\oL_b]=0.
\]
Since both $\df$ and $\af$ are abelian, for any $a,b\in\df$ and $c,d\in\af$ we have
\[
\langle (B-B^*)a,b\rangle=\langle (B_1-B_1^*)a,b\rangle=0,
\qquad
\langle (B-B^*)c,d\rangle=\langle (B_2-B_2^*)c,d\rangle=0.
\]
Thus $B_1$ and $B_2$ are symmetric. Being both symmetric and nilpotent, they must vanish, hence $B_1=0$ and $B_2=0$.

Now, for any $a\in\h_0$, the operator $\oL_a$ commutes with $E$, and therefore preserves both $\h_0$ and $\mathfrak{b}$. Setting $\rho_2(a)=(\oL_a)_{|\mathfrak{b}}$, we obtain
\[
\oL_a=
\begin{pmatrix}
	\rho_1(a) & 0 & 0\\
	0 & 0 & 0\\
	0 & 0 & \rho_2(a)
\end{pmatrix},
\qquad
[\rho_i(a),\rho_i(b)]=0,\; i=1,2,\; a,b\in\af,
\qquad
\bigcap_{a\in\af}\ker\rho_1(a)=\{0\}.
\]
On the other hand, for any $a\in\df$ and $b\in\af$, we have
\[
\oL_{[a,b]_\circ}=\big(\langle Bb,a\rangle-\langle Ba,b\rangle\big)E.
\]
Since $Bb=b\circ h$ and $Ba=0$, it follows that
\[
\oL_{[a,b]_\circ}=\langle [a,b]_\circ,h\rangle E.
\]
Using the relation $[\af,\df]=\df$, we deduce that for any $a\in\df$,
\[
\oL_a=\langle a,h\rangle E.
\]

	\medskip
	\noindent\textbf{Step 5: $B_{|\mathfrak{b}}=0$ and $\oL_a=0$ for any $a\in \mathfrak{b}$.}\\
	
	Recall that $\langle\cdot,\cdot\rangle_{\so}$ denotes the canonical scalar product on $\so(\h)$ (see Subsection \ref{subsection21}). The map
	$
	\h_0\to\R,\; a\mapsto \langle \oL_a,E\rangle_{\so}
	$
	is linear, hence there exists $k\in\h_0$ such that, for any $a\in\h_0$,
	\[
	\langle \oL_a,E\rangle_{\so}=\langle k,a\rangle.
	\]
		Let $b=E(a)\in \mathfrak{b}$. By the third relation in \eqref{LeviE0}, we have
	\begin{equation}\label{eqE_improved}
		\langle \oL_b,E\rangle_{\so}
		=\langle [E,\oL_a],E\rangle_{\so}=0.
	\end{equation}
		For any $x,y\in\mathfrak{b}$, using the first relation in \eqref{LeviE0}, we obtain
	\[
	\oL_{[x,y]_\circ}-[\oL_x,\oL_y]
	=\langle(B-B^*)y,x\rangle\,E.
	\]
	Decompose $[x,y]_\circ=[x,y]_1+[x,y]_2$ with $[x,y]_1\in\h_0$ and $[x,y]_2\in\mathfrak{b}$. Using \eqref{eqE_improved}, we compute
	\begin{align*}
		\langle \oL_{[x,y]_\circ},E\rangle_{\so}
		&=\langle \oL_{[x,y]_1},E\rangle_{\so}
		=\langle [x,y]_1,k\rangle\\
		&=\langle [x,y]_\circ,k\rangle
		=\langle x\circ y-y\circ x,k\rangle\\
		&=\langle (\Ru_k^\circ-(\Ru_k^\circ)^*)y,x\rangle.
	\end{align*}
	On the other hand,
	\[
	\langle [\oL_x,\oL_y],E\rangle_{\so}
	=\langle [E,\oL_x],\oL_y\rangle_{\so}
	=\langle \oL_{E(x)},\oL_y\rangle_{\so}.
	\]
	Therefore, for all $x,y\in\mathfrak{b}$,
	\begin{equation}\label{Important_improved}
		\langle \oL_{E(x)},\oL_y\rangle_{\so}
		=\langle (C-C^*)y,x\rangle\,|E|^2,
		\qquad
		C=\frac{1}{|E|^2}\Ru_k^\circ-B.
	\end{equation}
	
	Since $k\in\h_0$, by virtue of the third relation in \eqref{LeviE0}  $[E,\Ru_k^\circ]=0$, and we have also that $E$ commutes with $B$, hence also with $C$ and $C^*$. So $C$ and $C^*$ leaves invariant $\mathfrak{b}$.  In the sequel, we denote by $E$ and $C$ their restrictions to $\mathfrak{b}$.
	
	Since $E$ is invertible on $\mathfrak{b}$ and preserves $\ker C$, there exists an orthonormal basis $(a_i,b_i)_{i=1}^r$ of $\ker C$ such that
	\[
	E(a_i)=\la_i b_i,\qquad E(b_i)=-\la_i a_i.
	\]
	Using \eqref{Important_improved} with $x,y\in\{a_i,b_i\}$, we deduce
	\[
	\|\oL_{a_i}\|^2=\|\oL_{b_i}\|^2=0,
	\]
	hence $\oL_x=0$ for all $x\in\ker C$. A similar argument shows that $\oL_x=0$ for all $x\in \ker(C-C^*)$.
	
	Now let $x\in\ker C$. For any $y\in\mathfrak{b}$, \eqref{Important_improved} gives
	\[
	0=\langle \oL_{E(y)},\oL_x\rangle_{\so}
	=\big(\langle Cx,y\rangle-\langle Cy,x\rangle\big)|E|^2,
	\]
	hence $x\in \ker(C-C^*)$. This shows that $\ker C\subset \ker(C-C^*)$, and therefore $\ker C\subset\ker C^*=(\im C)^\perp$. It follows that
	\[
	\ker C=(\mathrm{Im}\,C)^\perp
	\quad \text{and hence} \qquad
	\mathfrak{b}=\ker C\oplus \mathrm{Im}\,C.
	\]
		By Step 3, $C$ is nilpotent, so there exists $k\geq 1$ such that $C^k=0$ and $C^{k-1}\neq 0$. If $k\geq 2$, then
	\[
	\mathrm{Im}\,C^{k-1}\subset \ker C\cap \mathrm{Im}\,C=\{0\},
	\]
	a contradiction so $k=1$. Therefore $C_{|\mathfrak{b}}=0$, and consequently $\oL_x=0$ for all $x\in\mathfrak{b}$.
	
	Finally, since $C=0$, we obtain
	\[
	Bx=\frac{1}{|E|^2}\,x\circ k=0,\qquad \forall x\in\mathfrak{b},
	\]
	and thus $B_{|\mathfrak{b}}=0$. Finally, in the splitting $\h=\df\oplus\af\oplus\mathfrak{b}$, $B$ takes the form
	\[ B=\begin{pmatrix}
		0&\Ru_h^\circ&0\\
		0&0&0\\
		0&0&0
	\end{pmatrix}. \]

	\medskip
	\noindent\textbf{Step 6: Determination of $\overline{F}$ and $\overline{w}$.}\\
	
	We have $B^2=0$ and, from \eqref{LeviE0},
	\[
	\begin{cases}
		[\overline{F},\oL_a]=\oL_{\overline{F}(a)+B(a)}+\langle a,\overline{w}\rangle E,\\
		[\overline{F},B]=\Ru_{\overline{w}}^\circ.
	\end{cases}
	\]
	
	If $a\in\mathfrak{b}$, then $\overline{F}(a)\in\mathfrak{b}$, $B(a)=0$, and $\langle a,\overline{w}\rangle=0$. Hence the first relation is trivially satisfied since $\oL_a=\oL_{\overline{F}(a)}=0$.
	
	Consider the restriction to $\h_0=\df\oplus\af$. In this decomposition,
	\[
	\overline{F}_{|\h_0}=\begin{pmatrix}
		F_1 & -X^*\\
		X & F_2
	\end{pmatrix},
	\qquad
	B_{|\h_0}=\begin{pmatrix}
		0 & \Ru_h^\circ\\
		0 & 0
	\end{pmatrix}.
	\]
	A direct computation gives
	\[
	[\overline{F},B]_{|\h_0}
	=\begin{pmatrix}
		-\Ru_h^\circ X & F_1\Ru_h^\circ-\Ru_h^\circ F_2\\
		0 & X\Ru_h^\circ
	\end{pmatrix}.
	\]
	Hence
	\[
	F_1\Ru_h^\circ-\Ru_h^\circ F_2=\Ru_{\overline{w}}^\circ.
	\]
		Let $a\in\af$. Then
	\[
	F_1(a\circ h)-F_2(a)\circ h=a\circ \overline{w}.
	\]
	On the other hand, applying the first relation to $h$ yields
	\[
	F_1(a\circ h)-a\circ F_1(h)=F_2(a)\circ h.
	\]
	Comparing the two identities, we obtain
	\[
	\rho_1(a)\big(F_1(h)-\overline{w}\big)=0.
	\]
	Since $\bigcap_{a\in\af}\ker\rho_1(a)=\{0\}$, it follows that
	\[
	\overline{w}=F_1(h).
	\]
	Now let $a\in\af$. Then
	\[
	[\overline{F},\oL_a]=\oL_{\overline{F}(a)}.
	\]
	For $b\in\df$, $c\in\af$, and $x\in\mathfrak{b}$, we compute
	\[
	\begin{cases}
		\overline{F}(a\circ b)=F_1(a\circ b)+X(a\circ b)
		= a\circ F_1(b)+F_2(a)\circ b,\\[0.3em]
		\overline{F}(a\circ c)=0=-a\star X^*(c),\\[0.3em]
		\overline{F}(a\circ x)=a\circ \overline{F}(x)+\overline{F}(a)\star x.
	\end{cases}
	\]
	The second identity implies $X=0$. Consequently,
	\[
	[F_1,\rho_1(a)]=\rho_1(F_2(a)),
	\qquad
	[G,\rho_2(a)]=\rho_2(F_2(a)),
	\]
	where $G=\overline{F}_{|\mathfrak{b}}$.
	
	Finally, for $a\in\df$, since $\oL_a=\langle a,h\rangle E$, the first relation gives
	\[
	\langle F_1(a),h\rangle E+\langle a,\overline{w}\rangle E=0,
	\]
	which is true.
	
	It follows that
	\[
	[F_1,\rho_1(a)]=\rho_1(F_2(a))=0,
	\qquad
	[G,\rho_2(a)]=\rho_2(F_2(a))=0.
	\]
	 The six steps combined give that in the splitting $\h=\df\oplus\ad\oplus\mathfrak{b}$, for $a\in\af,b\in\df,x\in\mathfrak{b}$,
	 \[ B=\begin{pmatrix}0&\Ru_h^{\circ}&0\\
	 	0&0&0\\
	 	0&0&0
	 	\end{pmatrix},\; \overline{F}=\begin{pmatrix}F_1&0&0\\
	 	0&F_2&0\\
	 	0&0&G
	 	\end{pmatrix},\;\oL_{a}=\begin{pmatrix}\rho_1(a)&0&0\\
	 	0&0&0\\
	 	0&0&\rho_2(a)
	 	\end{pmatrix},\; \oL_b=\langle b,h\rangle E,\;\Lu_{x}=0\esp\overline{w}=F_1(h). \]
	 	By using \eqref{change}, we get 
	 	\[ A(a)=\rho_1(a)h+\rho_2(v_0),\; A(b)=\langle b,h\rangle E(v_0),\; A(x)=\langle x,v_0\rangle E(v_0)  \]and
	 	\[  {F}=\begin{pmatrix}F_1&0&0\\
	 		0&F_2&0\\
	 		0&0&F_3
	 	\end{pmatrix},\;\Lu_{a}=\begin{pmatrix}\rho_1(a)&0&0\\
	 		0&0&0\\
	 		0&0&\rho_2(a)
	 	\end{pmatrix},\; \Lu_b=\langle b,h\rangle E\esp\Lu_{x}=\langle x,v_0\rangle E\esp  {w}=F_1(h)+F_3(v_0), \]
	 	where $F_3=G-|v_0|^2E_{|\mathfrak{b}}$. We have $\Lu_v=0$ and hence $[E,F]=0$. So
	 	\[ [F_1,\rho_1(a)]=\rho_1(F_2(a))=0,\; [F_3,\rho_2(a)]=\rho_2(F_2(a))=0\esp[F_3,E_1]=0,\; [E_1,\rho_2(a)]=0. \]

	The verification of the Novikov property is given in Section \ref{section9}, Subsection \ref{subsection96}. Finally, combining \eqref{bracket} with the above results yields the structure of flat Lorentzian Lie algebras listed in Table \ref{4}.
	\end{proof}

\subsection{The non-unimodular case: Solutions of \eqref{curvature} in the case ($\mathrm{NU1}$).} 

\begin{pr}\label{NU1} Let $(\h,\star,\prs)$ be a Euclidean algebra together with the data $(E,F,A,u,v,w,\al,\la)$ satisfying \eqref{curvature} such the extension $\G$ is non-unimodular and $\tr(A)+\la\not=0$. Then:\begin{enumerate}
		\item[$(i)$] 
	 $E=0$, $u=v=0$, $\al=0$, $\la\not=0$,\item[$(ii)$]   $\h$ splits orthogonally $\h=\df_1\oplus\df_2\oplus\af_1\oplus\af_2$,
	 \item[$(iii)$] for $i=1,2$,  there exists $\rho_i:\af_1\too\so(\df_i)$ with $\bigcap_{a\in\af_1}\ker\rho_i(a)=\{0\}$, $h=h_1+h_2\in \df_1\oplus\df_2$,  such that:
	 \begin{enumerate}
	 
	\item The product $\star$ is given by
	 \[ \Lu_{d_1}=\Lu_{d_2}=\Lu_{a_2}= (\Lu_{a_1})_{|\af_1\oplus\af_2}=0,\;
	 (\Lu_{a_1})_{|\df_i}=\rho_i(a_1),\; d_i\in\df_i, a_i\in\af_i,\; i=1,2. \]
	 \item  $A,F,w$ are given by, for $i=1,2$ and  $d_i\in\df_i$, $a_i\in\af_i$,
	 \[ \begin{cases}
	 	A_{|\df_1}=0,\; A(d_2)=U(d_2)+\la d_2,\; A(a_1)=\rho_1(a_1)h_1+\rho_2(a_1)h_2,\; A(a_2)=V(a_2)+\la a_2,\\
	 	F_{|\df_i}=F_i,\; F_{|\af_i}=G_i,\; i=1,2,\\
	 	w=w_1+w_2,\; w_1=\lambda h_1-U(h_2)+F_1(h_1)+F_2(h_2),\; w_2\in\af_1\oplus\af_2,
	 \end{cases} \]
	 where 
	$F_i\in\so(\df_i),\;\;G_i\in\so(\af_i),\;i=1,2,\;
	U:\df_2\too\df_1,\;V:\af_2\too\af_1,$
	and
	\[ \begin{cases}\rho_1(G_1(a_1))=\rho_2(G_1(a_1))=\rho_1(V(a_2))=0,\; UF_2=F_1U,\; VG_2=G_1V,\\
		[F_1,\rho_1(a_1)]=[F_2,\rho_2(a_1)]=0,\; U\rho_2(a_1)=\rho_1(a_1)U,\; a_1\in\af_1,\; a_2\in\af_2.
		 \end{cases} \]
	\end{enumerate}
	\end{enumerate}
	The structure of flat Lorentzian Lie algebra on $\G=\R e\oplus\h\oplus\R f$ is given in Table \ref{5}.

\end{pr}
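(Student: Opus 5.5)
In case (NU1) we already know $\alpha=0$, $E=0$ and $u=v=0$, so I would begin by writing out the reduced system. Substituting these values into \eqref{curvature}, it becomes
\begin{equation*}
\begin{cases}
\ass_\star(a,b,c)-\ass_\star(b,a,c)=0,\\
A([a,b]_\star)+b\star Aa-a\star Ab=0,\\
[F,\Lu_a]=\Lu_{Fa+Aa},\\
[F,A]=A^2-\lambda A+\Ru_w,
\end{cases}
\end{equation*}
and all the remaining equations hold automatically. This is exactly the (U2) system. The only difference is that the constraint $\tr(A)+\lambda=0$ is replaced by $\tr(A)+\lambda\neq0$.

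I would then reuse the opening of the proof of Proposition \ref{u0} word for word. By Theorem \ref{milnor}, $\h=\df\oplus\af$ with a representation $\rho:\af\to\so(\df)$. Proposition \ref{solvable} applied to the abelian algebra $\{\Lu_a\}$, which is stable under $\ad_F$, gives $\Lu_{Fa+Aa}=0$, so $F$ preserves both $\df$ and $\af$. Lemma \ref{AM} then puts $A$ in the form
\[ A=\begin{pmatrix}A_1&\Ru_h\\0&A_2\end{pmatrix}. \]
The fourth equation splits into four parts: $[F_1,\rho(a)]=[A_1,\rho(a)]=0$; $[F_1,A_1]=A_1^2-\lambda A_1$; $[F_2,A_2]=A_2^2-\lambda A_2$; and an off-diagonal relation that determines $w_1=w_{|\df}$.

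Next I would show $\lambda\neq0$. If $\lambda=0$, the diagonal relations read $[F_i,A_i]=A_i^2$. Taking traces of powers gives $\tr(A_i^{n+1})=\tr([F_i,A_i^n])/n=0$, so each $A_i$ is nilpotent, hence $\tr(A)=0$ and $\tr(A)+\lambda=0$, a contradiction. With $\lambda\neq0$, Lemma \ref{LF} applies to both pairs $(F_1,A_1)$ and $(F_2,A_2)$. It gives $A_i^2=\lambda A_i$ and $[F_i,A_i]=0$. With respect to $\ker A_i\oplus(\ker A_i)^\perp$, each $A_i$ has the block form $\begin{pmatrix}0&U\\0&\lambda\end{pmatrix}$, and the intertwining relation $F\,U=U\,F$ holds. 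This produces the splittings $\df=\df_1\oplus\df_2$ with $\df_1=\ker A_1$, and $\af=\af_1\oplus\af_2$ with $\af_1=\ker A_2$, together with the maps $U$ and $V$ and the relations $UF_2=F_1U$ and $VG_2=G_1V$.

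The remaining task is compatibility with $\rho$. Because $[A_1,\rho(a)]=0$, every $\rho(a)$ preserves $\ker A_1=\df_1$ and the $\lambda$-eigenspace of $A_1$. Skew-symmetry then gives invariance of $\df_2=\df_1^\perp$, and the equation $U\rho_2=\rho_1U$ follows. For $\af$, I would use the condition $\Lu_{Aa}=-\Lu_{Fa}$ on $\af$. The key point is to show $\rho(a_2)=0$ for $a_2\in\af_2$. On $(\ker A_2)^\perp$ we have $Aa_2=Va_2+\lambda a_2+\rho(a_2)h$. Together with $[F,\Lu_{a_2}]=\Lu_{Fa_2+Aa_2}=0$, this gives $\rho(F_2a_2+Va_2+\lambda a_2)=0$. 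I would then argue via real versus imaginary eigenvalues: $\lambda\,\mathrm{Id}+F_2$ is invertible on $\af_2$, and $F_2$ preserves $\af_2$. From this I would derive $\rho(a_2)=0$ and $\rho(Va_2)=0$, and also $\rho(G_1a_1)=0$. This step, separating the $\af_2$ directions out of the representation, is where I expect the main difficulty, because $V$ mixes $\af_2$ into $\af_1$. Finally, the off-diagonal equation, handled as in (U2), gives
\[ \rho(a)\big(F_1h_1+F_2h_2-A_1h+\lambda h-w_1\big)=0. \]
Since $\bigcap_{a\in\af_1}\ker\rho_i(a)=\{0\}$, this fixes $w_1=\lambda h_1-U(h_2)+F_1(h_1)+F_2(h_2)$, while $w_2$ remains free. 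Substituting everything into \eqref{bracket} then yields Table \ref{5}.
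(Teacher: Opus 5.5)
Up to the step you flagged, your route is the paper's: reduction to the (U2) system, Theorem~\ref{milnor}, Proposition~\ref{solvable}, Lemma~\ref{AM}, $\la\neq0$, Lemma~\ref{LF}, the splitting of $\rho$ along $\df=\df_1\oplus\df_2$, and the formula for $w_1$.

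The flagged step cannot be carried out, because its conclusion is false. Besides $[F_{|\df},\rho(a)]=[A_1,\rho(a)]=0$, the only relation in the reduced system tying $\rho$ to $G=F_{|\af}$ and $A_2$ is $\Lu_{Fa+Aa}=0$. This reads $\rho\circ(G+A_2)=0$, where $G+A_2=\begin{pmatrix}G_1&V\\0&G_2+\la\mathrm{Id}\end{pmatrix}$ on $\af_1\oplus\af_2$ (your $F_2$ is $G_2$). It says exactly
\[ \rho(G_1a_1)=0,\qquad \rho(Va_2)+\rho\bigl((G_2+\la\mathrm{Id})a_2\bigr)=0 . \]
Invertibility of $G_2+\la\mathrm{Id}$ only lets you solve $\rho_{|\af_2}=-\rho\circ V\circ(G_2+\la\mathrm{Id})^{-1}$. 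Nothing in \eqref{curvature} separates the two summands, so neither $\rho(\af_2)=0$ nor $\rho(V\af_2)=0$ follows. A minor point: in the $\la\neq0$ step your traces give $\tr(A_i^k)=0$ only for $k\geq2$. Finish by noting that all $\tr((A_i^2)^j)$ vanish, so $A_i^2$, and hence $A_i$, is nilpotent.

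\textbf{Counterexample.} Take $\df=\mathrm{span}(d_1,d_2)$ and $\af=\mathrm{span}(y,x)$, orthonormal. Let $Jd_1=d_2$, $Jd_2=-d_1$, $\rho(y)=J$, $\rho(x)=-J$. Set $E=F=0$, $u=v=w=0$, $\al=0$, $\la=1$, $A_{|\df}=0$, $Ay=0$, $Ax=x+y$.
\begin{itemize}
\item The brackets are $[f,e]=e$, $[f,x]=x+y$, $[x,y]=e$, $[y,d_1]=d_2$, $[y,d_2]=-d_1$, $[x,d_1]=-d_2$, $[x,d_2]=d_1$.
\item The Levi-Civita product has nonzero terms $f\cdot e=e$, $f\cdot f=-f$, $x\cdot f=-x-y$, $x\cdot x=x\cdot y=e$, $y\cdot d_1=d_2$, $y\cdot d_2=-d_1$, $x\cdot d_1=-d_2$, $x\cdot d_2=d_1$. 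One checks directly that it is left-symmetric.
\item Here $\tr(A)+\la=2$, so this is case (NU1) with $\af_1=\R y$, $\af_2=\R x$, $Vx=y$. Yet $\rho(x)=-J\neq0$ and $\rho(Vx)=J\neq0$.
\item This is not an artifact of choices. The vector $e$ is fixed up to scale as the modular vector. Replacing $f$ by $f+z-\frac12|z|^2e$ leaves $\star$ unchanged and replaces $A$ by $A-\Ru_z$, which only moves $h$.
\end{itemize}

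The paper's proof contains the same unjustified step. It asserts that $\rho(G(a)+A_2(a))=0$ ``is equivalent to'' $\rho(G_1(a_1))=0$, $\rho(V(a_2))=0$, $\rho(G_2(a_2)+\la a_2)=0$. So the claims $\Lu_{a_2}=0$ and $\rho_1(V(a_2))=0$ in the statement, and correspondingly Table~\ref{5}, are not correct as written.

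What you can prove is the coupled relation displayed above. It gives $\rho(\af_2)\subset\rho(\af_1)$, so you still have $\bigcap_{a\in\af_1}\ker\rho(a)=\{0\}$. Your derivation of $w_1$ also survives: run it over all $a\in\af$, keeping the extra term $\rho(a_2)h$ in $Aa_2$.
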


	\begin{proof}
		We have already shown that $\alpha=0$, $\tr(A)+\lambda\neq 0$, and
		\[
		\mathbf{h}=(\lambda+\tr(A))e.
		\]
		It follows, by virtue of Proposition \ref{pr1}, that $\Ru_{\mathbf h}$ is symmetric and that $H=\mathbb Re$ is a two-sided ideal of $H^\perp$. By \eqref{Levi}, we obtain
		\[
		E=0,\qquad v=u=0.
		\]
		Hence \eqref{curvature} reduces to
		\[
		\begin{cases}
			\ass_\star(a,b,c)-\ass_\star(b,a,c)=0,\\[1mm]
			A([a,b])+b\star A a-a\star A b=0,\\[1mm]
			[F,\Lu_a]=\Lu_{F(a)+A(a)},\\[1mm]
			A^2a-\lambda A a+a\star w=[F,A]a,\\[1mm]
			\tr(\Ru_a)=0,a,b,c\in\mathfrak h.
		\end{cases}
		\]
			We are therefore in the same situation as in Proposition~\ref{u0}, except that here $\tr(A)+\lambda\not=0.$
		Consequently, $\mathfrak h=\mathfrak d\oplus\mathfrak a$ is a flat Euclidean Lie algebra whose Lie bracket is determined by a representation
		\[
		\rho:\mathfrak a\to\so(\mathfrak d)
		\]
		satisfying $
		\bigcap_{a\in\mathfrak a}\ker \rho(a)=\{0\},$
		and such that
		\[
		A=
		\begin{pmatrix}
			A_1 & \Ru_h\\
			0 & A_2
		\end{pmatrix},
		\qquad
		F=
		\begin{pmatrix}
			H & 0\\
			0 & G
		\end{pmatrix},
		\]
		with
		\[
		\begin{cases}
			[H,\rho(a)]=[A_1,\rho(a)]=0,\\[1mm]
			[H,A_1]=A_1^2-\lambda A_1,\\[1mm]
			[G,A_2]=A_2^2-\lambda A_2,\\[1mm]
			H\Ru_h-\Ru_hG=A_1\Ru_h+\Ru_hA_2-\lambda \Ru_h+\Ru_{w_1},\\
			[F,\Lu_a]=\Lu_{F(a)+A(a)}=0.
		\end{cases}
		\]
			If $\lambda=0$, then both $A_1$ and $A_2$ are nilpotent, hence $\tr(A)=0$, contradicting $\tr(A)+\lambda\neq 0$. Therefore,
		$
		\lambda\neq 0.
		$
		
		By Lemma~\ref{LF}, there exist orthogonal decompositions
		\[
		\mathfrak d=\mathfrak d_1\oplus\mathfrak d_2,
		\qquad
		\mathfrak a=\mathfrak a_1\oplus\mathfrak a_2,
		\]
		such that
		\[
		A_1=
		\begin{pmatrix}
			0 & U\\
			0 & \lambda \mathrm{Id}_{\mathfrak d_2}
		\end{pmatrix},
		\qquad
		A_2=
		\begin{pmatrix}
			0 & V\\
			0 & \lambda \mathrm{Id}_{\mathfrak a_2}
		\end{pmatrix},
		\]
		and
		\[
		H=
		\begin{pmatrix}
			F_1 & 0\\
			0 & F_2
		\end{pmatrix},
		\qquad
		G=
		\begin{pmatrix}
			G_1 & 0\\
			0 & G_2
		\end{pmatrix},
		\]
		with
		\[
		UF_2=F_1U,
		\qquad
		VG_2=G_1V.
		\]
		The relation $[A_1,\rho(a)]=0$ implies that
		\[
		\rho(a)=
		\begin{pmatrix}
			\rho_1(a) & 0\\
			0 & \rho_2(a)
		\end{pmatrix},
		\qquad
		U\rho_2(a)=\rho_1(a)U.
		\]
		Moreover, the relation $[H,\rho(a)]=0$ is equivalent to
		\[
		[F_i,\rho_i(a)]=0,
		\qquad i=1,2.
		\]
		Since $[H,\rho(a)]=[A_1,\rho(a)]=0$, the identity
		\[
		H(a\star h)-G(a)\star h
		=
		A_1(a\star h)+A_2(a)\star h-\la a\star h+a\star w_1
		\]
		becomes
		\[
		\rho(a)\bigl(H(h)-A_1(h)+\la h-w_1\bigr)
		=
		\rho\bigl(G(a)+A_2(a)\bigr).
		\]
			On the other hand, we have already we have $\Lu_{F(a)+A(a)}=0,$
		which is equivalent to
		\[
		\rho\bigl(G(a)+A_2(a)\bigr)=0.
		\]
		Hence
		\[
		w_1=H(h)-A_1(h)+\la h
		=
		\lambda h_1-U(h_2)+F_1(h_1)+F_2(h_2).
		\]
		Finally, the relation
		\[
		\rho\bigl(G(a)+A_2(a)\bigr)=0
		\]
		is equivalent to
		\[
		\rho(G_1(a_1))=0,
		\qquad \rho(V(a_2))=0\esp 
		\rho\bigl(G_2(a_2)+\lambda a_2\bigr)=0,
		\]
		for all $a_1\in\mathfrak a_1$ and $a_2\in\mathfrak a_2$.
		
		Since $G_2$ is skew-symmetric, the endomorphism
		$
		G_2+\lambda \mathrm{Id}_{\mathfrak a_2}
		$
		is invertible, and therefore
		$
		\im(G_2+\lambda \mathrm{Id}_{\mathfrak a_2})=\mathfrak a_2.
		$
		It follows that $\rho(\mathfrak a_2)=0.$
	\end{proof}

	\subsection{The non-unimodular case: Solutions of \eqref{curvature} in the case ($\mathrm{NU2}$).} 
	
	\begin{pr}\label{NU2} Let $(\h,\star,\prs)$ be a Euclidean algebra and the data $(E,F,A,u,v,w,\al,\la)$ satisfying \eqref{curvature} such the extension $\G$ is non-unimodular and $\al\not=0$. Then\begin{enumerate}
	  \item[$(i)$] $u=v=w=0$, $A=F=0$ and $\la=0$,\item[$(ii)$] $\h=\df\oplus\af$ and there exists a   representation of abelian Lie algebra $\rho:\af\too\so(\df)$ such that $\bigcap_{a\in\af}\ker\rho(a)=\{0\}$, 
	   $\star$ is given by \eqref{Levi-Milnor} and $E=\begin{pmatrix}
	   	E_1&0\\0&E_2
	   \end{pmatrix}$ and $[E_1,\rho(a)]=0$ and $\rho(E_2(a))=0$.
	\end{enumerate}
	The structure of flat Lorentzian Lie algebra on $\G=\R e\oplus\h\oplus\R f$ is given in Table \ref{6}.
	
	   \end{pr}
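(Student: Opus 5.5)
The plan is to start from the reductions already obtained in case (NU2) at the beginning of this section. There we have $u=v=w=0$, $A=F=0$ and $\la=0$, with $\al\neq0$. Substituting these into \eqref{curvature} removes almost all equations, and what remains should be:
\[
\begin{cases}
\ass_\star(a,b,c)-\ass_\star(b,a,c)=0,\\
[E,\Lu_a]=\Lu_{Ea},\\
[F,E]=\al F,\\
Eu=\al(u-v),
\end{cases}
\]
together with the trivial identities coming from $u=v=w=0$. The relation $[F,E]=\la E+\al F+\Lu_v$ holds automatically, since $F=0$, $\la=0$ and $v=0$. The fourth relation, $[E,A]a-\langle a,v\rangle v-a\star v+\al Aa=0$, also holds, since $A=0$ and $v=0$. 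The same applies to the sixth relation and to $\langle u,[a,b]_\star\rangle=\al(\dots)$, where both sides vanish. I will check each line of \eqref{curvature} in this way, and also confirm that the trace condition $v=\mathbf h_0$ gives $\mathbf h_0=0$.

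Next I will identify $\h$. The first equation says that $(\h,[\cdot,\cdot]_\star,\prs_\h)$ is a flat Euclidean Lie algebra with Levi-Civita product $\star$. Theorem~\ref{milnor} then gives $\h=\df\oplus\af$, a representation $\rho$ with $\bigcap_a\ker\rho(a)=0$, and $\star$ given by \eqref{Levi-Milnor}. This matches part $(ii)$.

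The main work is to determine $E$ from $[E,\Lu_a]=\Lu_{Ea}$. Here I will copy the argument used for $F$ in Proposition~\ref{u0}. The set $\{\Lu_a\}$ is an abelian subalgebra of $\so(\h)$, and this equation shows it is stable under $\ad_E$. Proposition~\ref{solvable}(2) then forces $[E,\Lu_a]=0$ and $\Lu_{Ea}=0$ for all $a$.

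From $[E,\Lu_a]=0$ we get $E(a\star b)=a\star Eb$. Since $\df=\h\star\h$, the map $E$ preserves $\df$, and by skew-symmetry it also preserves $\af$. So $E=\mathrm{diag}(E_1,E_2)$. Restricting $[E,\Lu_a]=0$ to $\df$ for $a\in\af$, and using that $E_2(a)$ has zero $\df$-component, gives $[E_1,\rho(a)]=0$. The condition $\Lu_{Ea}=0$ applied to $a\in\af$ gives $\rho(E_2a)=0$. For $a\in\df$ it is automatic, because $\Lu_d=0$.

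Conversely, these conditions give back every remaining equation. I expect the only delicate point to be making sure nothing is lost in the reduction: the terms with $\al$ all multiply $A$, $F$, $u$, $v$ or $w$, which vanish, and $\mathbf h=\al f$ is consistent.

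Finally, I will write the bracket using \eqref{bracket}. It reduces to:
\[
[f,e]=\al f,\qquad [e,a]=Ea,\qquad [a,b]=[a,b]_\star,\qquad [f,a]=0.
\]
This bracket is what gives Table~\ref{6}.
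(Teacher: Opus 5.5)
Your proposal is correct and follows the paper's proof essentially step for step. You take the (NU2) reductions $u=v=w=0$, $A=F=0$, $\lambda=0$, reduce \eqref{curvature} to left-symmetry of $\star$ plus $[E,\Lu_a]=\Lu_{Ea}$, apply Theorem~\ref{milnor}, and then use Proposition~\ref{solvable}(2) on the abelian algebra $\{\Lu_a\}$ to get $[E,\Lu_a]=\Lu_{Ea}=0$, so that $E$ is block diagonal with $[E_1,\rho(a)]=0$ and $\rho(E_2(a))=0$, exactly as the paper does.
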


	   \begin{proof}
	   	In this case, $\alpha\neq 0$ and $\lambda+\tr(A)=0$. Hence $
	   	\mathbf h=\alpha f.$
	   	By Proposition~\ref{pr1}, the endomorphism $\Ru_f$ is symmetric and
	   	$
	   	H=\mathbb R f
	   	$
	   	is a two-sided ideal of $H^\perp$. It follows that
	   	$
	   	u=v=w=0,\qquad A=F=0,\qquad \lambda=0.
	   	$
	   	
	   	Therefore, \eqref{curvature} reduces to
	   	\[
	   	\begin{cases}
	   		\ass_\star(a,b,c)-\ass_\star(b,a,c)=0,\\[1mm]
	   		[E,\Lu_a]=\Lu_{E(a)},\\[1mm]
	   		\tr(\Ru_a)=0.
	   	\end{cases}
	   	\]
	   	Thus $\mathfrak h$ is a flat Euclidean Lie algebra, and Theorem~\ref{milnor} applies. Consequently,
	   	\[
	   	\mathfrak h=\mathfrak d\oplus\mathfrak a,
	   	\]
	   	where the product $\star$ is determined by a representation of an abelian Lie algebra
	   	$
	   	\rho:\mathfrak a\to\so(\mathfrak d)
	   	$
	   	satisfying
	   	$
	   	\bigcap_{a\in\mathfrak a}\ker\rho(a)=\{0\}.
	   	$
	   	By virtue of Proposition \ref{solvable},
	   	$
	   	[E,\Lu_a]=\Lu_{E(a)}=0.
	   	$
	   	In particular, $E$ preserves both $\mathfrak a$ and $\mathfrak d$. Therefore, with respect to the decomposition
	   	$
	   	\mathfrak h=\mathfrak d\oplus\mathfrak a,
	   	$
	   	the endomorphism $E$ takes the block diagonal form
	   	\[
	   	E=
	   	\begin{pmatrix}
	   		E_1 & 0\\
	   		0 & E_2
	   	\end{pmatrix},
	   	\]
	   	and satisfies
	   	$
	   	[E_1,\rho(a)]=0
	   	$,
	   	$\rho(E_2(a))=0.
	   	$
	   \end{proof}

\section{Conclusion}

 In this work, we have established a complete structural description of flat Lorentzian Lie groups, thereby resolving a long-standing open problem in the theory of pseudo-Riemannian Lie groups. We have shown that any such group necessarily admits either a left-invariant parallel timelike vector field or is of Kundt type, and that, in both cases, its Lie algebra belongs to one of six explicitly determined models. A notable and practically useful feature of these six models is that all conditions imposed on their parameters — commutativity relations between skew-symmetric endomorphisms, invariance conditions on representations, compatibility relations between linear maps — are of a purely linear-algebraic nature, reducing to systems that are straightforward to solve. This makes it easy to construct explicit examples in arbitrary dimension and to systematically explore the geometry of flat Lorentzian Lie groups well beyond the low-dimensional setting. The present result unifies and extends several previously known partial classifications, while revealing a clear dichotomy between Novikov-type structures and those arising from generalized double extensions of flat Euclidean Lie algebras. We expect these results to serve as a foundation for further investigations, particularly the study of flat pseudo-Riemannian metrics of higher index.

\section{Appendix} \label{section9}

This appendix collects the detailed computations that are needed to complete the proofs of the main results but would interrupt the flow of the argument if included in the body of the paper. Subsection \ref{subsection91}  derives the system \eqref{curvature} by expanding the left-symmetry condition on the Levi-Civita product \eqref{Levi}. Subsection \ref{Subsection92} derives the Novikov conditions \eqref{NE=0} by applying Proposition \ref{Novikovpr}. Subsections \ref{subsection93}–\ref{subsection96} verify the Novikov property in each of the unimodular cases $(U1)$, $(U2)$, $(U3)$ respectively. Subsection \ref{subsection95} establishes the equivalence between systems \eqref{LeviE} and \eqref{LeviE0}  used in the proof of Proposition \ref{E}. All computations are self-contained and can be read independently of one another.

\subsection{Computation for Proposition \ref{double}} \label{subsection91}

 Let us determine the conditions under which the Lie algebra
 $
  \G=\mathbb Re\oplus\mathfrak h\oplus\mathbb Rf,
 $
 endowed with the Levi-Civita product $\cdot$ given by \eqref{Levi}, is left-symmetric; that is, such that for all $u,v,w\in\G$,
 \[
 Q(u,v,w):=\ass(u,v,w)-\ass(v,u,w)=0.
 \]
 Observe that
 \[
 \langle Q(u,v,w),x\rangle=-\langle Q(u,v,x),w\rangle,
 \]
 for all $u,v,w,x\in\G$. Consequently, $(\G,\cdot,\langle\cdot,\cdot\rangle)$ is a left-symmetric algebra if and only if, for all $a,b,c\in\h$,
 \begin{equation}\label{Q}
 Q(a,b,c)=Q(a,b,e)=Q(a,e,b)=Q(a,e,e)=Q(a,f,b)=Q(a,f,e)=Q(e,f,a)=Q(e,f,e)=0.
 \end{equation}
 Indeed, under these assumptions, for all $u,v\in\G$ and $a\in\h$, one has
 \[
 \langle Q(u,v,f),e\rangle=0,
 \qquad
 \langle Q(u,v,f),a\rangle=0.
 \]
 Moreover, the quantity $\langle Q(u,v,f),f\rangle$ vanishes identically. Hence
 \[
 Q(u,v,f)=0,
 \]
 which proves the claim. Let us expand the equations \eqref{Q}.
\begin{enumerate}
	\item For $a,b,c\in\h$.
\begin{align*}
	\ass(a,b,c)&=(a\star b+\langle Aa,b\rangle_\h e).c-a.(b\star c+\langle Ab,c\rangle_\h e)\\
	&=(a\star b)\star c+\langle A(a\star b),c\rangle_\h e+
	\langle Aa,b\rangle_\h(Ec+\langle c,v\rangle_\h e)\\
	&-a\star (b\star c)-\langle Aa,b\star c\rangle_\h e-\langle Ab,c\rangle_\h\langle a,u\rangle_\h e\\
	&=\ass_{\star}(a,b,c)+\langle Aa,b\rangle_\h Ec+\langle  A(a\star b)+b\star Aa+\langle Aa,b\rangle v-\langle a,u\rangle Ab,c\rangle_\h e.
\end{align*}
Then $Q(a,b,c)=0$ if and only if
\[ \begin{cases}
	\ass_{\star}(a,b,c)-\ass_{\star}(b,a,c)=\left(\langle Ab,a\rangle_\h-\langle Aa,b\rangle_\h \right)Ec,\\
	A([a, b])+b\star Aa-a\star Ab+\left(\langle Aa,b\rangle_\h-\langle Ab,a\rangle_\h\right) v
	+\langle b,u\rangle_\h Aa-\langle a,u\rangle_\h Ab=0.
\end{cases} \]
We obtain the first and the second equation in \eqref{curvature}.
\item $a,b\in\h$.
\begin{align*}
	\ass(a,b,e)&=(a\star b+\langle Aa,b\rangle_\h e).e-\langle b,u\rangle_\h \langle a,u\rangle_\h e
	=\langle a\star b,u\rangle_\h e+\al \langle Aa,b\rangle_\h e
	-\langle b,u\rangle_\h \langle a,u\rangle_\h e\\
		\ass(b,a,e)&=\langle b\star a,u\rangle_\h e+\al \langle Ab,a\rangle_\h e
		-\langle b,u\rangle_\h \langle a,u\rangle_\h e.
\end{align*}
Then $Q(a,b,e)=0$ if and only if
$$\langle [a,b]_\star,u\rangle=\al\left(\langle Ab,a\rangle-\langle Aa,b\rangle\right).$$
We obtain the second relation in the last equation in \eqref{curvature}

\item $a,b\in\h$.
\begin{align*}
	\ass(a,e,b)&=\langle a,u\rangle_\h e.b-a.(Eb+\langle b,v\rangle_\h e)\\
	&=\langle a,u\rangle_\h(Eb+\langle b,v\rangle_\h e)-a\star Eb-\langle Aa,Eb\rangle_\h e
	-\langle b,v\rangle_\h\langle a,u\rangle_\h e\\
	&=\langle a,u\rangle_\h Eb-a\star Eb
	+\langle EAa,b\rangle_\h e,\\
	\ass(e,a,b)&=(Ea+\langle a,v\rangle_\h e).b-e.(a\star b+\langle Aa,b\rangle_\h e)\\
	&=Ea\star b+\langle AEa,b\rangle_\h e+\langle a,v\rangle_\h(Eb+\langle b,v\rangle_\h e)
	-E(a\star b)-\langle a\star b,v\rangle_\h e-\al \langle Aa,b\rangle_\h e\\
	&= Ea\star b+\langle a,v\rangle_\h Eb-E(a\star b)
	+\left( \langle AEa,b\rangle_\h+\langle a,v\rangle_\h\langle b,v\rangle_\h
	-\langle a\star b,v\rangle_\h-\al \langle Aa,b\rangle_\h  \right).
\end{align*}
Hence $Q(a,e,b)=0$ if and only if
\[ \begin{cases}
	E(a\star b)	-a\star Eb-Ea\star b=
	(\langle a,v\rangle_\h-\langle a,u\rangle_\h) Eb
	,\\
	[E,A]a-\langle a,v\rangle_\h v-a\star v+\al Aa=0.
\end{cases} \]We obtain the third and fourth equation in \eqref{curvature}.

\item $a\in\h$.
\begin{align*}
	\ass(a,e,e)&=\langle u,e\rangle_\h \al e-\al\langle u,e\rangle_\h e=0,\\
	\ass(e,a,e)&=(Ea+\langle a,v\rangle_\h e).e-e.(\langle a,u\rangle_\h e)\\
	&=\langle Ea,u\rangle_\h e+\langle a,v\rangle_\h \al e-\langle a,u\rangle_\h\al e.
\end{align*}
So $Q(a,e,e)=0$ if and only if $E(u)=\al(v-u)$. We obtain the last relation in \eqref{Levi}.

\item $a,b\in\h$.
\begin{align*}
	\ass(a,f,b)&=(-\langle a,u\rangle_\h f-Aa).b-a.(Fb+\langle b,w\rangle_\h e) ,\\
	&=-\langle a,u\rangle_\h(Fb+\langle b,w\rangle_\h e)-Aa\star b-\langle A^2a,b\rangle_\h e
	-a\star Fb-\langle Aa,Fb\rangle_\h e-\langle b,w\rangle_\h\langle a,u\rangle_\h e\\
	&=-\langle a,u\rangle_\h Fb-Aa\star b-a\star Fb-\left(2\langle a,u\rangle_\h\langle b,w\rangle_\h+\langle A^2a,b\rangle_\h+\langle Aa,Fb\rangle_\h \right)e,\\
	\ass(f,a,b)&=(Fa+\langle a,w\rangle_\h e).b-f.(a\star b+\langle Aa,b\rangle_\h e)\\
	&=Fa\star b+\langle AFa,b\rangle_\h e+\langle a,w\rangle_\h(Eb+\langle b,v\rangle_\h e)
	-(F(a\star b)+\langle a\star b,w\rangle_\h e)-\langle Aa,b\rangle_\h\lambda e\\
	&=Fa\star b+\langle a,w\rangle_\h Eb-F(a\star b)+\left(
	\langle AFa,b\rangle_\h+\langle a,w\rangle_\h\langle b,v\rangle_\h
	-\langle a\star b,w\rangle_\h-\langle Aa,b\rangle_\h\lambda
	\right)e.
\end{align*}
So $Q(a,f,b)=0$ if and only if
\[ \begin{cases}
	F(a\star b)-a\star Fb-Fa\star b=\langle a,w\rangle_\h Eb+\langle a,u\rangle_\h Fb+Aa\star b,\\
	[F,A]a=A^2a-\la Aa+2\langle a,u\rangle_\h w+\langle a,w\rangle_\h v+a\star w.
\end{cases} \]
We get the fifth and sixth in \eqref{curvature}.
\item $a\in\h$.
\begin{align*}
	\ass(a,f,e)&=(-\langle a,u\rangle_\h f-Aa).e-\la a.e=
	-\la\langle a,u\rangle_\h e-\langle Aa,u\rangle e-\la \langle a,u\rangle_\h e\\
	\ass(f,a,e)&=(Fa+\langle a,w\rangle_\h e).e-\la\langle a,u\rangle_\h e\\
	&=\langle Fa,u\rangle_\h e-\la\langle a,u\rangle_\h e+\al \langle a,w\rangle_\h e.
\end{align*}So $Q(a,f,e)=0$ if and only if

$$A^*u+\la u-Fu+\al w=0.$$ We obtain the ninth equation in \eqref{curvature}.

\item $a\in\h$.
\begin{align*}
	\ass(e,f,a)&=(-v-\al f).a-e.(Fa+\langle a,w\rangle_\h e)\\
	&=-v\star a-\langle Av,a\rangle_\h e-\al(Fa+\langle a,w\rangle_\h e)
	-EFa-\langle Fa,v\rangle_\h e-\langle a,w\rangle_\h\al e\\
	&=-v\star a-\al Fa-EFa-\left(\langle Av,a\rangle_\h+2\al \langle a,w\rangle_\h
	+\langle Fa,v\rangle_\h\right)e ,\\
	\ass(f,e,a)&=\la (Ea+\langle a,v\rangle_\h e)-f.(Ea+\langle a,v\rangle_\h e)
	\\
	&=\la (Ea+\langle a,v\rangle_\h e)-FEa-\langle Ea,w\rangle e-\la \langle a,v\rangle_\h e\\
	&=\la Ea-FEa- \langle Ea,w\rangle_\h
	 e.
\end{align*}
Then $Q(e,f,a)=0$ if and only if
\[ \begin{cases}[F,E]=\la E+\al F+\Lu_v,\\
Av-Fv+Ew+2\al w=0.\end{cases} \]We obtain the seventh and eighth equations in \eqref{curvature}.

\item Finally,
\begin{align*}
	\ass(e,f,e)&=(-v-\al f).e=-\langle u,v\rangle e-\al \la e\\
	\ass(f,e,e)&=\la \al e.
\end{align*}
Then $Q(e,f,e)=0$ if and only if $\langle u,v\rangle=-2\al\la.$ We obtain the first relation in the last equation in \eqref{curvature}.

\end{enumerate}

\subsection{Computation for Proposition  \ref{NENO}}\label{Subsection92}

According to Proposition \ref{Novikovpr},  $\G$ endowed with the product given by \eqref{Levi} is a Novikov algebra if and only if, for any $u,v\in\G$,
\[ \Lu_{u.v}=[\Lu_u,\Lu_v]=0. \]

Let's start by writing the conditions for which $[\Lu_u,\Lu_v]=0$.
\begin{enumerate}
	\item $u=e$ and $v=e$.
\begin{align*}
	e.(f.e)-f.(e.e)&=0,\\
	e.(f.f)-f.(e.f)&=e.(-\la f-w)+f.v+\al f.f\\&=\la v+\al\la f-Ew-\langle w,v\rangle_\h e+Fv+\langle v,w\rangle_\h e-\al \la f-\al w\\
	&=\la v-Ew+Fv-\al w,\\
	e.(f.a)-f.(e.a)&=e.Fa+\al\langle a,w\rangle_\h e-f.Ea-\langle a,v\rangle_\h\la e\\
	&=EFa+\langle Fa,v\rangle e+\al\langle a,w\rangle e-FEa-\langle Ea,w\rangle e -\langle a,v\rangle\la e\\
	&=[E,F]a+\langle a, -Fv+Ew-\la v+\al w\rangle_\h e.
\end{align*}So
\[ [E,F]=0\esp \la v-Ew+Fv-\al w=0. \]

\item $u=e$ and $v=a\in\h$.
\begin{align*}
	e.(a.e)-a.(e.e)&=0,\\
	e.(a.f)-a.(e.f)&=-e.(\langle a,u\rangle_\h f+Aa)+a.v+\al a.f\\
	&=\langle a,u\rangle_\h v+\langle a,u\rangle_\h\al f-EAa-\langle Aa,v\rangle_\h e+a\star v+\langle Aa,v\rangle_\h e-\al \langle a,u\rangle_\h f-\al Aa \\
	&=a\star v-EAa-\al Aa+\langle a,u\rangle_\h v.\\
	e.(a.b)-a.(e.b)&=E(a\star b)+\langle a\star b,v\rangle_\h e+\al \langle Aa,b\rangle_\h e
	-a.Eb-\langle b,v\rangle_\h a.e\\
	&=E(a\star b)+\langle a\star b,v\rangle_\h e+\al \langle Aa,b\rangle_\h e-a\star Eb-\langle Aa,Eb\rangle_\h e-\langle b,v\rangle_\h\langle a,u\rangle_\h e\\
	&=E(a\star b)-a\star Eb+\langle -a\star v+\al Aa+EAa-\langle a,u\rangle_\h v,b\rangle_\h e.
\end{align*}So
\[ a\star v-EAa-\al Aa+\langle a,u\rangle_\h v=0\esp [E,\Lu_a]=0. \]

\item $u=f$ and $v=a\in\h$.
\begin{align*}
	f.(a.e)-a.(f.e)&=\la \langle a,u\rangle_\h e-\la \langle a,u\rangle_\h e=0,\\
	f.(a.f)-a.(f.f)&=-f.(\langle a,u\rangle_\h f+Aa)+\la a.f+a.w\\
	&=\langle a,u\rangle_\h(\la f+w)-FAa-\langle Aa,w\rangle_\h e
	-\la (\langle a,u\rangle f+Aa)+a\star w+\langle Aa,w\rangle e\\
	&=\langle a,u\rangle_\h w-FAa-\la Aa+a\star w,\\
	f.(a.b)-a.(f.b)&=f.(a\star b)+\la\langle Aa,b\rangle_\h e-a.(Fb+\langle b,w\rangle_\h e)\\
	&=F(a\star b)+\langle a\star b,w\rangle_\h e+\la\langle Aa,b\rangle_\h e
	-a\star Fb-\langle Aa,Fb\rangle_\h e-\langle b,w\rangle_\h\langle a,u\rangle_\h e\\
	&=F(a\star b)-a\star Fb
	+\langle \la Aa+FAa-\langle u,a\rangle w-a\star w,b\rangle_\h.
\end{align*}
So
\[ [F,\Lu_a]=0\esp \la Aa+FAa-\langle u,a\rangle_\h w-a\star w=0. \]
\item $u=a\in\h$ and $v=b\in\h$.
\begin{align*}
	a.(b.e)-b.(a.e)&=0,\\
	a.(b.f)-b.(a.f)&=-a.(\langle b,u\rangle_\h f+Ab)+b.(\langle a,u\rangle_\h f+Aa)\\
	&=\langle b,u\rangle_\h(\langle a,u\rangle_\h f+Aa)-a\star Ab-\langle Aa,Ab\rangle_\h e
	-\langle a,u\rangle_\h(\langle b,u\rangle_\h f+Ab)+b\star Aa+\langle Aa,Ab\rangle_\h e\\
	&=\langle b,u\rangle_\h Aa-\langle a,u\rangle_\h Ab+b\star Aa-a\star Ab,\\
	a.(b.c)-b.(a.c)&=a.(b\star c+\langle Ab,c\rangle_\h e)	-b.(a\star c+\langle Aa,c\rangle_\h e)\\
	&=a\star (b\star c)-b\star (a\star c)+\langle Aa,b\star c\rangle_\h e
	-\langle Ab,a\star c\rangle_\h e+\langle Ab,c\rangle_\h\langle a,u\rangle_\h e
	-\langle Aa,c\rangle_\h\langle b,u\rangle_\h e.
\end{align*}

\end{enumerate}

Finally, we get that $[\Lu_u,\Lu_v]=0$ if and only if.

\[ \begin{cases} a\star (b\star c)-b\star (a\star c)=0,\\
	b\star Aa-a\star Ab=\langle a,u\rangle Ab-\langle b,u\rangle Aa,\\
	[E,F]=[E,\Lu_a]=[F,\Lu_a]=0,\\
	EA=\Ru_v+\langle\bullet, u\rangle v-\al A,\;,\;\\
	FA=\Ru_w-\la A+\langle\bullet, u\rangle w,\;\\
	\;\la v-Ew+Fv-\al w=0.
\end{cases} \]

Let write now the conditions for which $\Lu_{u\star v}=0$. 
\begin{enumerate}
	\item $u=v=e$
	\begin{align*}
		(e.e).e&=\al^2 e,\\
		(e.e).f&=\al(-v-\al f),\\
		(e.e).a&=\al(Ea+\langle a,v\rangle_\h e).
	\end{align*}So $\al=0$.

	\item $u=e$, $v=f$
		\begin{align*}
		(e.f).e&=-v.e=-\langle u,v\rangle_\h e,\\
		(e.f).f&=-v.f=\langle u,v\rangle_\h f+Av,\\
		(e.f).a&=-v.a=-v\star a-\langle Av,a\rangle_\h e.
	\end{align*}So $\langle u,v\rangle_\h=0$,  $Av=0$ and $\Lu_v=0$.

	\item $u=f$, $v=e$
	\begin{align*}
		(f.e).e&=0,\\
		(f.e).f&=\la (-v),\\
		(f.e).a&=\la(Ea+\langle a,v\rangle_\h e).\\
	\end{align*}So $\la v=0$ and $\la E=0$.

	\item $u=e$, $v=a\in\h$
		\begin{align*}
		(e.a).e&=(Ea+\langle a,v\rangle_\h e).e=\langle Ea,u\rangle_\h e,\\
		(e.a).f&=(Ea+\langle a,v\rangle_\h e).f=-\langle Ea,u\rangle_\h f-AEa-\langle a,v\rangle_\h v,\\
		(e.a).b&=(Ea+\langle a,v\rangle_\h e).b=Ea\star b+\langle AEa,b\rangle_\h e
		+\langle a,v\rangle_\h(Eb+\langle v,b\rangle_\h e).
	\end{align*}So $Eu=0$, $AE=-\langle\bullet,v\rangle_\h v$ and $\Lu_{Ea}=-\langle a,v\rangle_\h E$.

	\item $u=a$, $v=e$
		\begin{align*}
		(a.e).e&=0\\
		(a.e).f&=-\langle u,a\rangle_\h v\\
		(a.e).b&=\langle u,a\rangle_\h (Eb+\langle b,v\rangle_\h e).
	\end{align*}So $|u| v=0$ and $|u| E=0$.

	\item $u=f$, $v=f$
		\begin{align*}
		(f.f).e&=(-\la f-w).e=-\la^2 e-\langle w,u\rangle_\h e\\
		(f.f).f&=(-\la f-w).f=-\la(-\la f-w)+\langle w,u\rangle_\h f+Aw\\
		(f.f).a&=(-\la f-w).a=-\la(Fa+\langle a,w\rangle_\h e)-w\star a-\langle Aw,a\rangle_\h e. 
	\end{align*}So $\langle u,w\rangle_\h=-\la^2$, $Aw=-\la w$, $\Lu_w=-\la F$.

	\item $u=f$, $v=a\in\h$
		\begin{align*}
		(f.a).e&=(Fa+\langle a,w\rangle_\h e).e=\langle Fa,u\rangle_\h e,\\
		(f.a).f&=(Fa+\langle a,w\rangle_\h e).f=-\langle Fa,u\rangle_\h f-AFa-\langle a,w\rangle_\h v\\
		(f.a).b&=(Fa+\langle a,w\rangle_\h e).b=Fa\star b+\langle AFa,b\rangle_\h e
		+\langle a,w\rangle_\h(Ea+\langle v,b\rangle_\h e).
	\end{align*}So $Fu=0$, $AF=-\langle \bullet,w\rangle_\h v$ and $\Lu_{Fa}=
	-\langle a,w\rangle_\h E.$

	\item $u=a\in\h$, $v=f$
		\begin{align*}
		(a.f).e&=-(\langle a,u\rangle_\h f+Aa).e=-\la \langle a,u\rangle_\h e-\langle Aa,u\rangle e,\\
		(a.f).f&=-(\langle a,u\rangle_\h f+Aa).f=\langle a,u\rangle_\h(\la f+w)
		+\langle Aa,u\rangle_\h f+A^2a\\
		(a.f).b&=-(\langle a,u\rangle_\h f+Aa).b=-\langle a,u\rangle_\h(Fb+\langle b,w\rangle_\h e)-Aa\star b-\langle A^2a,b\rangle_\h e.
	\end{align*}So $A^*u=-\la u$, $A^2a=-\langle a,u\rangle_\h w$ and $\Lu_{Aa}=-\langle a,u\rangle F$.

	\item $u=a\in\h$, $v=b\in\h$
	\begin{align*}
		(a.b).e&=(a\star b+\langle Aa,b\rangle_\h e).e=\langle a\star b,u\rangle_\h e,\\
		(a.b).f&=(a\star b+\langle Aa,b\rangle_\h e).f=-\langle a\star b,u\rangle_\h f-A(a\star b)-\langle Aa,b\rangle_\h v\\
		(a.b).c&=(a\star b+\langle Aa,b\rangle_\h e).c=(a\star b)\star c+\langle A(a\star b),c\rangle_\h e+\langle Aa,b\rangle_\h(Ec+\langle c,v\rangle_\h e).
	\end{align*}So $\Ru_u=0$, $A(a\star b)=-\langle Aa,b\rangle_\h v$
	and $\Lu_{a\star b}=-\langle Aa,b\rangle_\h E.$

\end{enumerate}
Combining all the conditions derived above, we obtain \eqref{NE=0}.

\subsection{Verification of the Novikov property in Proposition \ref{u}}\label{subsection93}

We have seen in the proof of Proposition \ref{u} that $\al=0$, $E=0$, $v=0$,  $\h=\df\oplus\A\oplus\R u$ and \[ \begin{cases}
	A_{|\df}=0,\; Aa=\Ru_\h(a)=\rho(a)h,\; a\in\A,\; Au=m+n-\la u,\\
	F_{|\df}=-\frac1{|u|^2}(\rho(n)-\la\rho(u)),\; F_{|\A\oplus\R u}=0,\\
	m=\rho(u)h+|u|^2h,\; w=w_1+w_2+w_3,\\
	w_1=\la h-\frac1{|u|^2}\rho(n)(h)+\frac{\la}{|u|^2}\rho(u)h,\; w_2=\frac{\la n}{|u|^2},\; w_3=-\frac{\la^2}{|u|^2}u,
\end{cases} \]where $\rho:\af\too\so(\df)$ is a representation of an abelian Lie algebra satisfying $\bigcap_{a\in\af}\ker\rho(a)=\{0\}$ and $\star$ is given by \eqref{Levi-Milnor}. Let us show that in this case the extension $\G=\R e\oplus\h\oplus\R f$ endowed with the Levi-Civita product given by \eqref{Levi} is a Novikov algebra, i.e, the data satisfy \eqref{NE=0}.

Note first that \eqref{NE=0} reduces to
\begin{equation}\label{bis} \begin{cases} a\star (b\star c)-b\star (a\star c)=0,\; \Lu_{a\star b}=0,\\
		b\star Aa-a\star Ab=\langle a,u\rangle Ab-\langle b,u\rangle_\h Aa,\\
		A(a\star b)=0,\;A^2a=-\langle a,u\rangle_\h w,\\
		[F,\Lu_a]=0,\;
		FA=\Ru_w-\la A+\langle\bullet, u\rangle_\h w,\;
		AF=0,\\
		\Lu_{Fa}=0,\; \Lu_{Aa}=-\langle a,u\rangle_\h F,\\
		Fu=0,\\
		\; A^*u=-\la u,\;\Ru_u=0,\\
		Aw=-\la w,\langle u,w\rangle=-\la^2,\;\Lu_w=-\la F.
\end{cases} \end{equation}

\medskip
\noindent\textbf{Verification of the identities.}

\emph{\underline{$(1)$ $a\star (b\star c)-b\star (a\star c)=0,\; \Lu_{a\star b}=0.$} }\\

Since $\h$ is a flat Euclidean Lie algebra, by virtue of Theorem \ref{milnor}, it satisfies this two identities. \\

\emph{\underline{ $(2)$ $A(a\star b)=0$.}}\\

We have $\df=\h\star\h$ and then $A(a\star b)=0$.\\

\emph{\underline{$(3)$ $b\star Aa-a\star Ab=\langle a,u\rangle Ab-\langle b,u\rangle_\h Aa,$}}\\

 This relation holds obviously if $a\in\df$ or $b\in\df$. 

If $a,b\in\A$ then $\langle u,a\rangle_\h=\langle u,b\rangle_\h=0$ and
\[ b\star Aa-a\star Ab=b\star(a\star h)-a\star(b\star h)=0. \]
So the relation holds in this case.

Let $a\in\A$ and $b=u$. So
\[ \begin{cases}
	u\star Aa-a\star Au=u\star (a\star h)-a\star m=\rho(u)\rho(a)(h)-\rho(a)\rho(u)h-|u|^2\rho(a)h=-|u|^2\rho(a)h\\
	\langle a,u\rangle Au-\langle u,u\rangle_\h Aa=-|u|^2\rho(a)h
\end{cases} \]and the relation holds.\\

\emph{\underline{$(4)$ $A^2a=-\langle a,u\rangle_\h w.$}}\\

 For $a\in\df$ or $a\in\A$ the relation holds obviously. For $a=u$,
\begin{align*} A^2u&=A(m+n-\la u)=\rho(n)h-\la(m+n-\la u)\\&=\rho(n)h-\la n-\la(\rho(u)h+|u|^2h)+\la^2u,\\
	 -\langle u,u\rangle w
&=-|u|^2(\la h-\frac1{|u|^2}\rho(n)(h)+\frac{\la}{|u|^2}\rho(u)h+\frac{\la n}{|u|^2}-\frac{\la^2}{|u|^2}u)\\
&=-\la|u|^2 h+\rho(n)(h)-\la\rho(u)h-{\la n}+{\la^2}u
 \end{align*}
and the relation holds.\\

\emph{\underline{$(5)$ $[F,\Lu_a]=0$.} }\\

For $a\in\df$, $\Lu_a=0$ and for $a\in\A\oplus\R u$, $(\Lu_{a})_{|\df}=\rho(a)$ and $(\Lu_{a})_{|\A\oplus\R u}=0$  so $[F,\Lu_a]=0$ for any $a\in\h$.\\

\emph{\underline{$(6)$ $FA=\Ru_w-\la A+\langle\bullet, u\rangle_\h w$.} }\\

For $a\in\df$ 
\begin{align*}
	FAa=0
	=a\star w-\la Aa+\langle a, u\rangle_\h w.
\end{align*} 
For $a\in\A$, we have
\begin{align*}
	FAa&=-\frac1{|u|^2}(\rho(n)\rho(a)h-\la\rho(u)\rho(a)h),\\
	\rho(a)w-\la Aa&=\rho(a)w_1-\la\rho(a)h\\
	&=\rho(a)\left(\la h-\frac1{|u|^2}\rho(n)(h)+\frac{\la}{|u|^2}\rho(u)h\right)
	-\la\rho(a)h\\
	&=-\frac1{|u|^2}(\rho(a)\rho(n)h-\la\rho(u)\rho(a)h)
\end{align*}and the relation holds
 since $\rho$ is a representation of abelian Lie algebra.

For $u$, we have
\begin{align*}FAu&=F(m+n-\la u)\\&= -\frac1{|u|^2}(\rho(n)-\la\rho(u))(\rho(u)h+|u|^2h),\\
	&=-\frac1{|u|^2}\rho(n)\rho(u)h-\rho(n)h+\frac{\la}{|u|^2}\rho(u)^2h+\la\rho(u)h,\\
\end{align*}
\begin{align*}
	u\star w-\la (m+n-\la u)+|u|^2w&=
\rho(u)(\la h-\frac1{|u|^2}\rho(n)(h)+\frac{\la}{|u|^2}\rho(u)h)
-\la(\rho(u)h+|u|^2h)-\la n+\la^2u\\&+|u|^2\left(\la h-\frac1{|u|^2}\rho(n)(h)+\frac{\la}{|u|^2}\rho(u)h+\frac{\la n}{|u|^2} -\frac{\la^2}{|u|^2}u\right)\\
&=\la\rho(u)h-\frac1{|u|^2}\rho(u)\rho(n)(h)+\frac{\la}{|u|^2}\rho(u)^2h
-\la \rho(u)h-\la|u|^2h-\la n+\la^2u\\
&+\la|u|^2h-\rho(n)(h)+\la\rho(u)h+\la n-\la^2u\\
&=-\frac1{|u|^2}\rho(n)\rho(u)h-\rho(n)h+\frac{\la}{|u|^2}\rho(u)^2h+\la\rho(u)h
 \end{align*}	and the relation holds.
 
 \emph{\underline{$(7)$ $AF=0,\; \Lu_{Fa}=0,\; \Lu_{Aa}=-\langle a,u\rangle F $.}}\\
 
 Since $\im F\subset \df$, we have $AF=0$ and $\Lu_{Fa}=0$. Let us check the relation $\Lu_{Aa}=-\langle a,u\rangle F$. The relation hold obviously if $a\in\df\oplus\A$. For $a=u$, we have
 \begin{align*}
 (\Lu_{Au})_{|\A\oplus\R u}&=0=-|u|^2F_{|\A\oplus\R u},\\
 (\Lu_{Au})_{|\df}&=\rho(n)-\la\rho(u),\\
 -|u|^2F_{|\df}&=\rho(n)-\la\rho(u)
\end{align*}	and the relation holds.

\emph{\underline{$(8)$ $Fu=0,\;\langle w,u\rangle_\h=-\la^2,\; \Ru_u=0,\; A^*u=-\la u,\;\Lu_w=-\la F,\;Aw=-\la w$. } }\\

We have obviously $Fu=0$, $\langle w,u\rangle_\h=-\la^2$,  $\Ru_u=0$ and $A^*u=-\la u$. Moreover,
\begin{align*}
	\Lu_{w}&=\frac{1}{|u|^2}\left(\la\Lu_n-\la^2\Lu_u\right)=-\la F,\\
	Aw&=\frac{\la }{|u|^2}An-\frac{\la^2}{|u|^2}Au\\
	&=\frac{\la }{|u|^2}\rho(n)h-\frac{\la^2}{|u|^2}(\rho(u)h+|u|^2h)-\frac{\la^2}{|u|^2}n
	+\frac{\la^3}{|u|^2}u\\
	&=-\la w.
\end{align*}
This completes the verification.

\subsection{Verification of the Novikov property in Proposition \ref{u0}}\label{subsection94}

Let $(\h,\star,\prs)$ be a Euclidean algebra together with the data $(E,F,A,u,v,w,\al,\la)$ as in Proposition \ref{u0}.
Let us check when the extension $\G$ is a Novikov algebra. We have $\al=\la=0$, $u=v=0$, $E=0$. Moreover, $(\h,\star,\prs)$ and $(F,A,w)$ are given in Proposition \ref{u0}.
In this case, \eqref{NE=0} reduces to
\begin{equation*} \begin{cases} a\star (b\star c)-b\star (a\star c)=0,\; \Lu_{a\star b}=0,\\
		b\star Aa-a\star Ab=0,\;
		A(a\star b)=0,\;A^2=0,\\
		[F,\Lu_a]=0,\;
		FA=\Ru_w,\;
		AF=0,\\
		\Lu_{Fa}=0,\; \Lu_{Aa}=0,\\
		Aw=0,\Lu_w=0,\\
\end{cases} \end{equation*}
Since $\h$ is a flat Euclidean Lie algebra, by virtue of Theorem \ref{milnor}, the first relations hold.

Since $\df=\h\star\h$ the relation $A(a\star b)=0$ is equivalent to $A_1=0$ and $A^2=0$ is equivalent to $A_2^2=0$. 

The relations $b\star Aa-a\star Ab=0$ and $[F,\Lu_a]=0$ hold obviously.
The relation $\Lu_{Fa}=\Lu_{Aa}=0$ is equivalent to $\im F_2\subset\ker\rho$ and 
$\im A_2\subset\ker\rho$. We have $\Lu_{w}=\Lu_{w_2}=0$ if and only if $\rho(w_2)=0$. Moreover, $A(w)=\rho(w_2)h+A_2w_2=0$ if and only if $\rho(w_2)h=0$ and $A(w)=0$. We have
\[ FA=\begin{pmatrix}
	0&F_1\circ\Ru_h\\
	0&F_2A_2
\end{pmatrix},\;AF=\begin{pmatrix}
0&\Ru_h\circ F_2\\
0&A_2F_2
\end{pmatrix}\esp \Ru_{w}=\begin{pmatrix}
0&\Ru_{F_1(h)}\\
0&0
\end{pmatrix}. \]So $AF=0$ and $FA=\Ru_w$ are equivalent to
\[ A_2F_2=F_2A_2=0,\; \rho(F_2(a))h=0\esp F_1(a\star h)=a\star F_1(h),\; a\in\af. \]
Finally, we get that $\G$ is a Novikov algebra if and only if
\[[F_1,\rho(a)]=0,\; A_1=0,\; A_2^2=0,\; \im F_2\subset\ker\rho,\; \im A_2\subset\ker\rho,\; A_2F_2=F_2A_2=0,\; \rho(w_2)h=0 \esp A_2w_2=0. \]

\subsection{Verification of the Novikov property in Proposition \ref{E}}\label{subsection96}

Let $(\h,\star,\prs)$ be a Euclidean algebra together with the data $(E,F,A,u,v,w,\al,\la)$ as in Proposition \ref{E}.
Let us check that the extension $\G$ is a Novikov algebra. We have $\al=\la=0$ and $u=0$. Moreover, $(\h,\star,\prs)$ and $(E,F,A,w)$ are given in Proposition \ref{E}.
In this case, \eqref{NE=0} reduces to
\begin{equation}\label{bisE} \begin{cases} a\star (b\star c)-b\star (a\star c)=0,\; \Lu_{a\star b}=-\langle Aa,b\rangle_\h E,\\
		b\star Aa-a\star Ab=0,\;
		A(a\star b)=-\langle Aa,b\rangle_\h v,\;A^2=0,\\
		[E,F]=[E,\Lu_a]=[F,\Lu_a]=0,\\
		EA=\Ru_v,\;
		FA=\Ru_w,\;\\
		AE=-\langle\bullet,v\rangle_\h v,\; AF=-\langle\bullet,w\rangle_\h v,\\
		\Lu_{Ea}=-\langle a,v\rangle_\h E,\Lu_{Fa}=-\langle a,w\rangle_\h E,\; \Lu_{Aa}=0,\\
		-Ew+Fv=0,\\
		Aw=0,\;\Lu_w=0,\;\Lu_v=0,\; Av=0.
\end{cases} \end{equation}

Recall that $\h$ splits orthogonally $\h=\df\oplus\af\oplus\mathfrak{b}$,   there exists $\rho_1:\af\too\so(\df)$, $\rho_2:\af\too\so(\mathfrak{b})$,  $h\in\df$ and $v_0\in\mathfrak{b}$ such that  $\bigcap_{a\in\af}\ker\rho_1(a)=\{0\}$,  $v=E(v_0)$ and for any $a\in\af$, $d\in\df$, $b\in \mathfrak{b}$,
\[ \begin{cases}\Lu_a=\begin{pmatrix}
		\rho_1(a)&0&0\\0&0&0\\
		0&0&\rho_2(a)
	\end{pmatrix},\; \Lu_d=\langle d,h\rangle E,\;\Lu_b=\langle b,v_0\rangle E,\\
	\;	A(d)=\langle h,d\rangle E(v_0),\;A(a)=\rho_1(a)h+\rho_2(a)v_0, \esp A(b)=\langle b,v_0\rangle E(v_0),\; w=F_1(h)+F_3(v_0)\\
	E=\begin{pmatrix}
		0&0&0\\
		0&0&0\\
		0&0&E_1
	\end{pmatrix}, F=\begin{pmatrix}
		F_1&0&0\\
		0&F_2&0\\
		0&0&F_3
	\end{pmatrix},\\
	[F_1,\rho_1(a)]=\rho_1(F_2(a))=0,\; [F_3,\rho_2(a)]=\rho_2(F_2(a))=0\esp[F_3,E_1]=0,\; [E_1,\rho_2(a)]=0.
\end{cases} \]	
Put $\h_0=\df\oplus\mathfrak{b}$,  $\rho=\rho_1\oplus\rho_2:\af\too \so(\h_0)$ and $X=h+v_0$. Then, for any $a\in\af$ and $x\in\h_0$
\[ A(a)=\rho(a)X\esp A(x)=\langle x,X\rangle_\h E(X),\;
(\Lu_a)_{|\af}=0,\;(\Lu_a)_{|\h_0}=\rho(a), \Lu_x=\langle x,X\rangle_\h E\esp w=F(X).  \]

\medskip
\noindent\textbf{Verification of the identities.}\\

\emph{\underline{$(1)$ $[\Lu_a,\Lu_b]=0$.} }\\

This relation holds obviously.\\

\emph{\underline{$(2)$ $\Lu_{a\star b}=-\langle Aa,b\rangle_\h E$.}}\\

For $a,b\in\af$, we have $\Lu_{a\star b}=0$ and $\langle Aa,b\rangle_\h E=0$.

For $a\in\af,x\in\h_0$,
\begin{align*}
	\Lu_{a\star x}&=\Lu_{\rho(a)x}=\langle \rho(a)x,X\rangle_\h E,\\
	\langle Aa,x\rangle_\h E&=\langle \rho(a)X,x\rangle E=-\langle \rho(a)x,X\rangle_\h E,\\
	\Lu_{x\star a}&=0,\\
	\langle Ax,a\rangle_\h E&=0.
\end{align*}
For $x,y\in\h_0$,
\begin{align*}
	\Lu_{x\star y}&=\langle x,X\rangle_\h\Lu_{E(y)}=\langle x,X\rangle_\h\langle E(y),X\rangle_\h E,\\
	\langle Ax,y\rangle_\h E&=\langle x,X\rangle_\h\langle E(X),y\rangle_\h=
	-\langle x,X\rangle_\h\langle E(y),X\rangle_\h E.
\end{align*}\\

\emph{\underline{$(3)$ $[E,F]=[E,\Lu_a]=[F,\Lu_a]=0$.}}\\

These relation are obviously satisfied.\\

 \emph{\underline{$(4)$ $EA=\Ru_v,\;
 	FA=\Ru_w$.}}\\
 	
 	For $a\in\af,x\in\h_0$,
 	\begin{align*}
 		EAa-\Ru_v(a)&=E\rho(a)X-\rho(a)E(v_0)=[E,\rho(a)]X=0,\\
 		FAa-\Ru_w(a)&=F\rho(a)X-\rho(a)F(X)=[F,\rho(a)]X=0,\\
 		EAx-\Ru_v(x)&=\langle x,X\rangle_\h E^2(X)-x\star E(v_0)=\langle x,X\rangle_\h E^2(X)-x\star E(X)=0,\\
 		FAx-\Ru_w(x)&=\langle x,X\rangle_\h FE(X)-x\star F(X)=\langle x,X\rangle_\h[E,F](X)=0.
 	\end{align*}
 	
 	\emph{\underline{$(5)$ $AE=-\langle\bullet,v\rangle_\h v,\; AF=-\langle\bullet,w\rangle_\h v$.}}\\
 	
 	Recall that $v=E(v_0)=E(X)$ and $w=F(X)$. For $a\in\af,x\in\h_0$,
 	\begin{align*}
 		AEa+\langle a,v\rangle_\h v&=0,\\
 		AFa+\langle a,w\rangle_\h v&=AF_2(a)=\rho(F_2(a))X=0,\\
 		AEx+\langle x,v\rangle_\h v&=\langle Ex,X\rangle_\h E(X)+\langle x,E(X)\rangle_\h E(X)=0,\\
 		AFx+\langle x,w\rangle_\h v&=\langle Fx,X\rangle_\h E(x)+\langle x,F(X)\rangle_\h E(X)=0.
 	\end{align*}
 	
 	\emph{\underline{$(6)$ $\Lu_{Ea}=-\langle a,v\rangle_\h E,\Lu_{Fa}=-\langle a,w\rangle_\h E,\; \Lu_{Aa}=0$.}}\\
 	
 	For  $a\in\af,x\in\h_0$,
 	\begin{align*}
 	\Lu_{Ea}+\langle a,v\rangle_\h E&=0,\\
 	\Lu_{Fa}+\langle a,w\rangle_\h E&=	\Lu_{F_2a}=0,\\
 	\Lu_{Aa}&=\Lu_{\rho(a)X}=\langle \rho(a)X,X\rangle_\h E=0,\\
 	\Lu_{Ex}+\langle x,v\rangle_\h E&=\langle Ex,X\rangle_\h E+\langle x,E(X)\rangle_\h E=0,\\
 	\Lu_{Fx}+\langle a,w\rangle_\h E&=\langle Fx,X\rangle_\h E+\langle x,F(X)\rangle_\h E=0,\\
 	\Lu_{Ax}&=\langle x,X\rangle_\h\Lu_{E(X)}=\langle x,X\rangle_\h\langle E(X),X\rangle_\h E=0.
 	\end{align*}
 	
 	\emph{\underline{$(7)$ $Fv-Ew=0,\;
 		Aw=0,\;\Lu_w=0,\;\Lu_v=0,\; Av=0$.}}
 		
 	\begin{align*}
 		Fv-Ew&=FE(X)-EF(X)=0,\\
 		Aw&=AF(X)=\langle X,F(X)\rangle_\h X=0,\\
 		\Lu_{F(X)}&=\langle X,F(X)\rangle_\h E=0,\\
 		Av&=AE(X)=\langle X,E(X)\rangle_\h X=0.
 	\end{align*}
 	
 	This completes the verification.

\subsection{Verification of the equivalence of \eqref{LeviE} and \eqref{LeviE0} }
\label{subsection95}

We perform a change of data by setting
\begin{equation*}\label{changebis}
	\oL_a=\Lu_a-\langle a,v_0\rangle E,\quad a\in\h,\qquad 
	\overline{F}=F+\Lu_{v_0},\qquad 
	B=A-\Ru_{v_0},\qquad 
	\overline{w}=w-F(v_0)+Bv_0.
\end{equation*}
Then $\oL$ defines the left multiplication of a new product $\circ$ on $\h$, given by
\[
\oL_a(b)=a\circ b=a\star b-\langle a,v_0\rangle E(b),\qquad a,b\in\h.
\] We denote by ${\Ru}_a^\circ$ the right multiplication operator associated to $\circ$.
With this change of data in mind, let us check that \eqref{LeviE} and \eqref{LeviE0} are equivalent.

From the third equation in \eqref{LeviE} we get $$[E,\oL_a]=\oL_{E(a)}+\langle E(a),v_0\rangle E+\langle a,E(v_0)\rangle E= \oL_{E(a)}. $$
\begin{align*}
	\Lu_{[a,b]}-[\Lu_a,\Lu_b]&=\oL_{[a,b]}+\langle [a,b],v_0\rangle E-
	[\oL_a,\oL_b]-\langle a,v_0\rangle[E,\oL_b]-\langle b,v_0\rangle [\oL_a,E]\\
	&=\oL_{[a,b]_\circ}+\langle a,v_0\rangle \oL_{E(b)}-
	\langle b,v_0\rangle \oL_{E(a)}+\langle [a,b],v_0\rangle E-
	[\oL_a,\oL_b]-\langle a,v_0\rangle[E,\oL_b]-\langle b,v_0\rangle [\oL_a,E]\\
	&=\oL_{[a,b]_\circ}-[\oL_a,\oL_b]-\langle b,\Ru_{v_0}a\rangle +\langle a,\Ru_{v_0}b\rangle.
\end{align*} So the first equation and the third equation are equivalent to
\[ \begin{cases}
	\oL_{[a,b]_\circ}-[\oL_a,\oL_b]=(\langle Bb,a\rangle-\langle Ba,b\rangle )E,\\
	[E,\oL_a]=\oL_{E(a)}.
\end{cases} \] By using the third equation in \eqref{LeviE}, we get
\[ [E,\Ru_{v_0}](a)=E(a\star v_0)-E(a)\star v_0=a\star E(v_0)+\langle a,v_0\rangle E(v_0)=\Ru_v+\langle\bullet,v_0\rangle v. \]
Now, from the fourth equation in \eqref{LeviE}, we get
\[ [E,B]=[E,A]-[E,\Ru_{v_0}]=\langle \bullet ,v\rangle v+\Ru_v-\Ru_v-\langle\bullet,v_0\rangle v=0. \]So
\begin{align*}
	B([a, b]_\circ)+b\circ Ba-a\circ Bb&=B([a,b])+\langle b,v_0\rangle BE(a)-
	\langle a,v_0\rangle BE(b)+b\star Ba-\langle b,v_0\rangle EB(a)
	-a\star Bb+\langle a,v_0\rangle EB(b)\\
	&=A([a,b])-[a,b]\star v_0+b\star Aa-b\star(a\star v_0)-a\star Ab+a\star(b\star v_0)\\
	&=0.
\end{align*} Note that $F(a)+A(a)=[a,v_0]+\overline{F}(a)+B(a)$.
We have
\begin{align*}
	[\overline{F},\oL_a]&=[F+\Lu_{v_0},\Lu_a-\langle a,v_0\rangle E]\\
	&=[F,\Lu_{a}]+[\Lu_{v_0},\Lu_a]\\
	&=\Lu_{F(a)+A(a)}+\langle a,w\rangle E+\Lu_{[v_0,a]}-\left( \langle A a,v_0\rangle-\langle Av_0,a\rangle\right)E\\
	&=\Lu_{\overline{F}(a)+B(a)}+\langle a,w\rangle E-\left( \langle A a,v_0\rangle-\langle Av_0,a\rangle\right)E\\
	&=\oL_{\overline{F}(a)+B(a)}+\langle \overline{F}(a)+B(a),v_0\rangle E+\langle a,w\rangle E
	-\left( \langle A a,v_0\rangle-\langle Av_0,a\rangle\right)E\\
	&=\oL_{\overline{F}(a)+B(a)}+\langle \overline{w},a\rangle E.
\end{align*}

We pursue:
\begin{align*}
	[\overline{F},B]&=[F+\Lu_{v_0},A-\Ru_{v_0}]\\
	&=[F,A]-[F,\Ru_{v_0}]+[\Lu_{v_0},A]-[\Lu_{v_0},\Ru_{v_0}].
\end{align*}
We have
\begin{align*}
	[F,A]&=A^2+\Ru_{w}+\langle \bullet,w\rangle v\\
	&=B^2+\Ru_{v_0}^2+B\circ \Ru_{v_0}+\Ru_{v_0}\circ B+\Ru_{w}+\langle \bullet,w\rangle v,\\
	[F,\Ru_{v_0}](a)&=F(a\star v_0)-F(a)\star v_0\\
	&=[F,\Lu_a](v_0)+a\star F(v_0)-F(a)\star v_0\\
	&=F(a)\star v_0+A(a)\star v_0+\langle a,w\rangle E(v_0)+a\star F(v_0)-F(a)\star v_0\\
	&=\Ru_{v_0}\circ A(a)+\Ru_{F(v_0)}(a)+\langle a,w\rangle E(v_0),\\
	&=\Ru_{v_0}\circ B(a)+\Ru_{v_0}^2(a)+\Ru_{F(v_0)}(a)+\langle a,w\rangle E(v_0),\\
	[\Lu_{v_0},A](a)&=v_0\star A(a)-A(v_0\star a)\\
	&=a\star A(v_0)-A(a\star v_0)+\left(\langle Av_0,a\rangle-\langle Aa,v_0\rangle \right)E(v_0)\\
	&=\Ru_{A(v_0)}(a)-A\circ\Ru_{v_0}(a)+\left(\langle Av_0,a\rangle-\langle Aa,v_0\rangle \right)E(v_0),\\
	&=\Ru_{A(v_0)}(a)-B\circ\Ru_{v_0}(a)-\Ru_{v_0}^2(a)+\left(\langle Av_0,a\rangle-\langle Aa,v_0\rangle \right)E(v_0),\\
	[\Lu_{v_0},\Ru_{v_0}](a)&=v_0\star(a\star v_0)-(v_0\star a)\star v_0\\
	&=-\ass(v_0,a,v_0)\\
	&=-\ass(a,v_0,v_0)	-\left(\langle Aa,v_0\rangle-\langle Av_0,a\rangle\right)E(v_0)\\
	&=-\Ru_{v_0}^2(a)+\Ru_{v_0\star v_0}(a)-\left(\langle Aa,v_0\rangle-\langle Av_0,a\rangle\right)E(v_0).
\end{align*}
Note that the right multiplication operator of $\circ$ is given by $\Ru^\circ_a=\Ru_a-\langle \bullet,v_0\rangle E(a)$ and
\begin{align*} E(\overline{w})&=EA(v_0)-E(F(v_0)+v_0\circ v_0)+E(w) \\
	&=Av+\langle v_0,v\rangle v+v_0\star v-F(v)-v_0\star v+E(w)=0.
\end{align*}	
Finally,
\[ [F,B]=B^2+\Ru_{A(v_0)-F(v_0)-v_0\star v_0+w}=B^2+\Ru_{\overline{w}}^\circ. \]

\section{The tables}\label{sectionTables}

{\renewcommand*{\arraystretch}{2}
	\begin{center}
		\begin{tabular}{|l|c|}
			\hline
		Name&	$\G_1=\R e\oplus \af\oplus \df\oplus Z_0\oplus Z_1$, 	\\
			\hline
	N.V.B&		$[e,e_i]=\la_i f_i,\;[e,f_i]=-\la_i e_i,\;$, $[e,z_i]=\mu_i\bar{z_i},\;[e,\bar{z_i}]=-\mu_i z_i$, $[a,e_i]=\langle a,u_i\rangle f_i,\;$\\
	&$[a,f_i]=-\langle a,u_i\rangle f_i,\; a\in\af.$\\
			\hline 
			Parameters& $0\leq\la_1\leq\ldots\leq\la_r$,  $0\leq\mu_1\leq\ldots\leq\mu_s$, $u_1,\ldots, u_r\in\af\setminus\{0\}$.\\
			\hline
	Conditions&	$\af=\mathrm{span}\{u_1,\ldots,u_r\}$,\\
	\hline	
	Metric	&	$\langle e,e\rangle=-1$, $\af,\df, Z_0,Z_1$ are Euclidean vector spaces that are pairwise orthogonal,\\ &$\mathrm{span}\left\{(e_i,f_i)_{i=1}^r\right\}$ is an orthonormal basis of $\df$ and $\{(z_i,\bar{z_i})_{i=1}^s\}$ is an orthonormal basis of $Z_0$.\\
			\hline
		Remark&	$G_1$ endowed with the Levi-Civita product is a Novikov algebra.\\
		&$Z_1$ is in the center of $\G_1$.\\
		\hline
		\end{tabular}
	\end{center}
	\captionof{table}{ Lorentzian flat Lie algebras from Proposition \ref{G1} \label{1}}
}

{\renewcommand*{\arraystretch}{2}
	\begin{center}
		\begin{tabular}{|l|c|}
			\hline
		Name&	$\G_2=\R e\oplus\h\oplus \R f$, $\h=\df\stackrel{\perp}\oplus\A\stackrel{\perp}\oplus\R u$ 	\\
			\hline
	N.V.B&		$[f,e]=\la e,\;[u,e]= e,\;[u,b]=\rho(u)b+\langle m,b\rangle e,\; [u,a]=\langle n,a\rangle e,\; $\\
			
		&$[f,b]=-\rho(n)b+\la\rho(u)(b)+\langle b,w\rangle e,$	$[f,a]=\rho(a)h+\la \langle a,n\rangle e ,\; $\\ &$[f,u]=\rho(u)h+h+n-\la u-{\la^2|u|^2}e+ f$,
		$[a,b]=\rho(a)b-\langle h,\rho(a)b\rangle  e,\; a\in\A,\; b\in\df.$
			\\
			\hline 
		Parameters& $\la\in\R$,	$h\in\df,\; n\in\A$, $w=\la h-\rho(n)(h)+\la \rho(u)h$, $m=\rho(u)h+h,$ $\rho:\af=\A\oplus\R u\too\so(\df)$.\\
		\hline
		Conditions&$\forall a,b\in\af,\;[\rho(a),\rho(b)]=0$, $\bigcap_{a\in\af }\ker\rho(a)=\{0\}$, $\dim\df=2r$.\\
		\hline
		Metric&	 $(\h,\prs)$ is an Euclidean vector space and $\h^\perp=\{e,f\}$,\;$\langle e,e\rangle=\langle f,f\rangle=0$, $\langle e,f\rangle=1$.
			\\
			\hline
		Remarks& $G_2$ endowed with the Levi-Civita product is a Novikov algebra.\\	
			\hline
			
		\end{tabular}
	\end{center}
	\captionof{table}{ Lorentzian flat Lie algebras from Proposition \ref{u}\label{2}}
}

{\renewcommand*{\arraystretch}{2}
	\begin{center}
		\begin{tabular}{|l|c|}
			\hline
			Name&	$\G_3=\R e\oplus\df\oplus\af\oplus \R f$ 	\\
			\hline
			N.V.B&$[b_1,b_2]=\langle (A_1-A_1^*)b_1,b_2\rangle e$, $[a_1,a_2]=\langle (A_2-A_2^*)a_1,a_2\rangle e$, $[a,b]=\rho(a)(b)-\langle h,\rho(a)(b)\rangle e,$		\\
			&$[f,b]=F_1(b)+A_1(b)+\langle b,w_1\rangle e,$ $[f,a]=F_2(a)+\rho(a)(h)+A_2(a)+\langle a,w_2\rangle e$\\ 
			& $a,a_1,a_2\in\af$, $b,b_1,b_2\in\df.$
			\\
			\hline 
			Parameters&$A_1:\df\to\df$, $A_2:\af\to\af$, $w_2\in\af,\;h\in\df,\; w_1=F_1(h)-A_1(h)$,  \\
			&$F_1\in\mathrm{so}(\df),\;F_2\in\mathrm{so}(\af)$, $\rho:\af\too\so(\df)$\\
			\hline
			Conditions&$\forall a_1,a_2\in\af$, $[\rho(a_1),\rho(a_2)]=[F_1,\rho(a_1)]=[A_1,\rho(a_1)]=\rho(F_2(a_1)+A_2(a_1))=0$\\
			&$\bigcap_{a\in\af}\ker\rho(a)=\{0\}$, $\dim\df=2r$,  $[F_i,A_i]=A_i^2,\; i=1,2$.\\
			\hline
			Metric&	$(\df\oplus\af,\prs)$ is an Euclidean vector space, $\langle\af,\df\rangle=0$, $\h^\perp=\{e,f\}$,\;$\langle e,e\rangle=\langle f,f\rangle=0$ and $\langle e,f\rangle=1$. 			\\
			\hline
			Remarks& $G_3$ endowed with the Levi-Civita product is a Novikov algebra if and only if\\
			&$[F_1,\rho(a)]=0$, $A_1=0$, $A_2^2=0$, $\im F_2\subset\ker\rho$, $\im A_2\subset\ker\rho$, $A_2F_2=F_2A_2=0$, $\rho(w_2)h=0$ and $A_2w_2=0$. \\	& $\G_3$ is obtained by the double extension process from a flat Euclidean Lie algebra.\\
			\hline
			
		\end{tabular}
	\end{center}
	\captionof{table}{ Lorentzian flat Lie algebras from Proposition \ref{u0}\label{3}}
}

{\renewcommand*{\arraystretch}{2}
	\begin{center}
		\begin{tabular}{|l|c|}
			\hline
			Name& $\G_4=\R e\oplus\af\oplus\df\oplus\h\oplus\R f$	 	\\
			\hline
			N.V.B&$[f,e]=E(v),\; [e, x]=E(x)-\langle E(x),v\rangle e,$ $[a, b]=\rho_1(a)b-\langle \rho_1(a)b,h\rangle e,$ $[a, x]=\rho_2(a)x-\langle \rho_2(a)x,v\rangle e,$		\\
			&$[b, x]=\langle b,h\rangle E(x)+\langle b,h\rangle \langle E(v),x\rangle e,$ $[x,y]=\langle x,v\rangle E(y)-\langle y,v\rangle E(x)+\left(\langle x,v\rangle\langle E(v),y\rangle-\langle y,v\rangle\langle E(v),x\rangle\right) e$,
			\\
			&$[f, a]=F_2(a)+\rho_1(a)h+\rho_2(a)v,$ $[f, b]=F_1(b)+\langle b,h\rangle E(v)+\langle F_1(h),b\rangle e,$\\& $[f, x]=F_3(x)+\langle x,v\rangle E(v)+\langle F_3(v),x\rangle e,$\\ 
			& $a\in\af,b\in\df,x,y\in\h$,
			\\
			\hline 
			Parameters& $h\in\df,v\in\h$, $\rho_1:\af\too\so(\df)$, $\rho_2:\af\too\so(\h)$, $F_1\in\so(\df),F_2\in\so(\af),F_3,E\in\so(\h),\;$ \\
			\hline
			Conditions& For $i=1,2$, $\forall a_1,a_2\in\af$, $[\rho_i(a_1),\rho_i(a_2)]=[F_1,\rho_1(a_1)]=[F_3,\rho_2(a)]=[E,\rho_2(a)]=0$, $\det(E)\not=0$,\\
			&$\rho_1(F_2(a_1))=\rho_2(F_2(a_1))=[E,F_3]=0$, $\;\bigcap_{a\in\af}\ker\rho_1(a)=\{0\}$,\; $\dim\df=2r$, $\dim\h=2s\geq2$. \\
			\hline
			Metric&	$(\df\oplus\af\oplus\h,\prs)$ is an Euclidean vector space, $\af,\df,\h$ are pairwise orthogonal,\\& $\h^\perp=\{e,f\}$,\;$\langle e,e\rangle=\langle f,f\rangle=0$ and $\langle e,f\rangle=1$. 			\\
			\hline
			Remarks& $G_4$ endowed with the Levi-Civita product is a Novikov algebra.\\
			\hline
			
		\end{tabular}
	\end{center}
	\captionof{table}{ Lorentzian flat Lie algebras from Proposition \ref{E}\label{4}}
}

{\renewcommand*{\arraystretch}{2}
	\begin{center}
		\begin{tabular}{|l|c|}
			\hline
			Name& $\G_5=\R e\oplus\df_1\oplus\df_2\oplus\af_1\oplus\af_2\oplus\R f$,\; $\h_1=\df_1\oplus\df_2$, $\h_2=\af_1\oplus\af_2$.	 	\\
			\hline
			N.V.B& $[f,e]=\la e,\;[a_1,b_i]=\rho_i(a_1)b_i-\langle h,\rho_i(a_1)b_i\rangle e,\; i=1,2,$
			$[a_1,a_2]= -\langle a_1,V(a_2)\rangle e,\;$ , 		\\
			
			&$[b_1,b_2]= -\langle b_1,U(b_2)\rangle e,\; $, $[f,b_1]=F_1(b_1)+ \langle b_1,w_1\rangle e$, ,\\& $[f,b_2]=F_2(b_2)+\la b_2+U(b_2)+ \langle b_2,w_1\rangle e$, $[f,a_1]=G_1(a_1)+\rho_1(a_1)(h_1)+\rho_2(a_1)(h_2)+ \langle a_1,w_2\rangle e$, \\ 
			&$[f,a_2]=G_2(a_2)+\la a_2+V(a_2)+ \langle a_2,w_2\rangle e,$,\\
			& $a_i\in\af_i,\; b_i\in\df_i,i=1,2.$
			\\
			\hline 
			Parameters& $\la\not=0,\;w_i\in\h_i, \; h=h_1+h_2\in\h_1,$ $\rho_1:\af_1\too\so(\df_1)$, $\rho_2:\af_1\too\so(\df_2)$, $F_i\in\mathrm{so}(\df_i),\; i=1,2$,  \\
			&$G_i\in\mathrm{so}(\af_i),\; i=1,2,$ $ U:\df_2\too \df_1,\; V:\af_2\too\af_1,$ $w_1=\la h_1+F_1(h_1)+F_2(h_2)-U(h_2)$,\\
			\hline
			Conditions&For $i=1,2$, $\forall a_1,a_2\in\af_1,\forall a\in\af_2$, $[\rho_i(a_1),\rho_i(a_2)]=[F_i,\rho_i(a_1)]=\rho_i(G_1(a_1))=\rho_1(V(a))=0$,\\
			&$\bigcap_{a\in\af_1}\ker\rho_i(a)=\{0\}$, $VG_2=G_1V,\; UF_2=F_1U,\;U\rho_2(a)=U\rho_1(a)$,\;$\dim\df_i=2r_i$.\\
			\hline
			Metric&	$(\df_1\oplus\df_2\oplus\af_1\oplus\af_2,\prs)$ is an Euclidean vector space, $\af_1,\af_2,\df_1,\df_2$ are pairwise orthogonal,\\& $\h^\perp=\{e,f\}$,\;$\langle e,e\rangle=\langle f,f\rangle=0$ and $\langle e,f\rangle=1$. 			\\
			\hline
			Remarks& $G_5$ is a non unimodular Lie algebra\\	
			\hline
			
		\end{tabular}
	\end{center}
	\captionof{table}{ Lorentzian flat Lie algebras from Proposition \ref{NU1}\label{5}}
}

{\renewcommand*{\arraystretch}{2}
	\begin{center}
		\begin{tabular}{|l|c|}
			\hline
			Name& $\G_6=\R e\oplus\df\oplus\af\oplus\R f$	 	\\
			\hline
			N.V.B& $[f,e]=\la f,\; [e,b]=E_1(b),$	 $[e,a]=E_2(a),\; [a,b]=\rho(a)(b)$, $a\in\af,b\in\df$.	\\
			\hline 
			Parameters&$\la\in\R$, $\rho:\af\too\so(\df)$, $E_1\in\so(\df),\; E_2\in\so(\af).$   \\
			\hline
			Conditions&$\la\not=0$, $\forall a,b\in\af$, $[\rho(a),\rho(b)]=[E_1,\rho(a)]=\rho(E_2(a))=0$, $\bigcap_{a\in\af}\ker\rho(a)=\{0\}$, $\dim\df=2r$.\\
			\hline
			Metric& $(\df\oplus\af,\prs)$ is an Euclidean vector space, $\af,\df$ are pairwise orthogonal,\\& $\h^\perp=\{e,f\}$,\;$\langle e,e\rangle=\langle f,f\rangle=0$ and $\langle e,f\rangle=1$.	 			\\
			\hline
			Remarks& $G_6$ is a non-unimodular Lie algebra.\\	
			\hline
			
		\end{tabular}
	\end{center}
	\captionof{table}{ Lorentzian flat Lie algebras from Proposition \ref{NU2}\label{6}}
}

\section{Classification of flat Lorentzian Lie algebras of dimension $\leq$ 4}\label{Section7}

To derive Tables \ref{07}-\ref{8} from the six general families of Tables \ref{1}-\ref{6}, we proceed as follows. For each family, we impose the dimensional constraint and enumerate all possible choices of the parameters (representations $\rho$, endomorphisms $F$, $A$, $E$, and vectors $h$, $u$, $n$, $w$) that are compatible with the conditions listed in the corresponding table and that yield a Lie algebra of the prescribed dimension. We then identify each resulting Lie algebra with one of the algebras in the standard classification of low-dimensional Lie algebras given in \cite{Zi}, using the structure constants as the matching criterion. In dimension 3, the six families reduce to six isomorphism classes listed in Table \ref{7}. In dimension 4, the analysis is more involved due to the presence of one-parameter families of non-isomorphic algebras; the result is recorded in Table \ref{8}. Throughout, two flat Lorentzian structures on the same Lie algebra that differ by an automorphism are identified.

{\renewcommand*{\arraystretch}{2}
	
	\begin{center}
		\begin{tabular}{|l|l|l|}
			\hline
			Name&Non Vanishing Lie brackets&Metric \\
			\hline
			$A_2$& $[\bar{e},e]=e$& $\la \bar{e}^*\odot e^*$, $\la\not=0$ \\
			\hline
			
		\end{tabular}
		
	\end{center}

\captionof{table}{ Flat Lorentzian  Lie algebras of dimension 2 \label{07}}	
}

{\renewcommand*{\arraystretch}{1.5}
\begin{center}
	\begin{tabular}{|l|l|l|}
		\hline
		Name&Non vanishing Lie brackets& Metric, $\la\not=0$\\
		
		\hline
		$A_2\oplus A_1$& $[\bar{e},e]=e$&$\la \bar{e}^*\odot e^*+e_1^*\odot e_1^*$.\\
		&Non Unimodular&\\
		\hline
		$A_{3,1}$&$[\bar{e},e_1]=e$&$ \bar{e}^*\odot e^*+e_1^*\odot e_1^*$\\
		&Novikov&\\
		\hline
		$A_{3,2}$&$[\bar{e},e]=e,\;[\bar{e},e_1]=e+e_1$&$\la \bar{e}^*\odot e^*+e_1^*\odot e_1^*$\\
		&Non unimodular&\\
		\hline
		$A_{3,3}$&$[\bar{e},e]=e,\;[\bar{e},e_1]=e_1$&$\la \bar{e}^*\odot e^*+e_1^*\odot e_1^*$\\
		&Non unimodular&\\
		\hline
		$A_{3,4}$&$[e_1,e]=e,\;[e_1,\bar{e}]=-\bar{e}$&$ \bar{e}^*\odot e^*+\la^2e_1^*\odot e_1^*$\\
		&Novikov&\\
		\hline
		$A_{3,6}$&$[e_1,e_2]=e_3,\;[e_1,e_3]=-e_2$&$-\la^2 e_1^*\odot e_1^*+e_2^*\odot e_2^*+e_3^*\odot e_3^*$\\
		&Novikov&\\
		\hline

	\end{tabular}

\end{center}

\captionof{table}{ Flat Lorentzian  Lie algebras of dimension 3 \label{7}}
}

{\renewcommand*{\arraystretch}{1.5}

\begin{center}
	\begin{tabular}{|l|l|l|}
		\hline
		Name&Non vanishing Lie bracket&Metrics\\
		\hline
		$\G\oplus \R x$& $(\G,\prs)$ in Table \ref{07}& $\prs+(x^*)^2$\\
		\hline
		$A_{4,12}$&$[e_1,e_2]=e_3,\;[e_1,e_3]=-e_2,\;[e_4,e_2]=e_3,
		\;$&
		$-\la^2 (e_1^*)^2+(e_2^*)^2+(e_3^*)^2+\mu^2(e_4^*)^2$\\
		&$[e_4,e_3]=-e_2.$  & or $ \la e_1^*\odot e_4^*+(e_2^*)^2+(e_3^*)^2$\\
		&Novikov&\\
		\hline
		$A_{4,6}^{\la,0}$&$[\bar{e},e]=\la e,\;[\bar{e},e_1]=e_2,\;[\bar{e},e_2]=-e_1,
		\;$&
		$ \bar{e}^*\odot e^*+\mu^2((e_1^*)^2+(e_2^*)^2)+ (\bar{e}\odot e_2^*)$\\
		&Non Unimodular& \\
		
		\hline
		$A_{4,6}^{\la,\la}$&$[\bar{e},e]=\la e,\;[\bar{e},e_1]=e_2+\la e_1,\;[\bar{e},e_2]=-e_1+\la e_2,
		\;$&
		$ \bar{e}^*\odot e^*+\mu^2((e_1^*)^2+(e_2^*)^2)+(\bar{e}\odot e_2^*)$\\
		&Non Unimodular& \\
		
		\hline
		$A_{4,9}^{0}$&$[\bar{e},e]= e,\;[\bar{e},e_2]= e_2,
		\;[e_2,e_1]=e.$&
		$\bar{e}^*\odot e^*+(e_1^*)^2+\al^2 (e_2^*)^2$\\
		&Non Unimodular& $+{y} (\bar{e}^*\odot e_1^*)+ (e_1^*\odot e_2^*)$\\
		
		\hline
		$A_{4,5}^{1,1}$&$[\bar{e},e]= e,\;[\bar{e},e_1]= e_1,
		\;[\bar{e},e_2]=e_2.$&
		$\bar{e}^*\odot e^*+(e_1^*)^2+ (e_2^*)^2$\\
		&Non Unimodular&\\
		\hline
		$A_{4,2}^{1}$&$[\bar{e},e]= e,\;[\bar{e},e_1]= e_1+e,
		\;[\bar{e},e_2]=e_2.$&
		$\rho^2\bar{e}^*\odot   e^*+(e_1^*)^2+ (e_2^*)^2$\\
		&Non Unimodular&\\
		\hline
			$A_{4,1}$& $[\bar{e},e_2]=e_1,\; [e_2,e_1]=e$& $\bar{e}^*\odot e^*+(e_1^*)^2+ (e_2^*)^2+\al \bar{e}^*\odot e_1^*$\\
		&Novikov iff $\al=0$&\\
		
		\hline

		\hline
	\end{tabular}
	
\end{center}
\captionof{table}{  Flat Lie algebras of dimension 4\label{8}}
}

\end{document}